\documentclass[oneside,british]{amsart}
\usepackage[T1]{fontenc}
\usepackage[latin9]{inputenc}
\pagestyle{plain}
\usepackage{amstext}
\usepackage{amsthm}
\usepackage{amssymb}

\makeatletter
\numberwithin{equation}{section}
\numberwithin{figure}{section}
\theoremstyle{plain}
\newtheorem{thm}{\protect\theoremname}[section]
\theoremstyle{definition}
\newtheorem{defn}[thm]{\protect\definitionname}
\theoremstyle{remark}
\newtheorem{rem}[thm]{\protect\remarkname}
\theoremstyle{plain}
\newtheorem{cor}[thm]{\protect\corollaryname}
\theoremstyle{plain}
\newtheorem{lem}[thm]{\protect\lemmaname}
\theoremstyle{plain}
\newtheorem{prop}[thm]{\protect\propositionname}
\theoremstyle{plain}
\newtheorem*{prop*}{\protect\propositionname}
\theoremstyle{plain}
\newtheorem*{lem*}{\protect\lemmaname}
\theoremstyle{plain}
\newtheorem*{thm*}{\protect\theoremname}

\usepackage[pdftex,pdfpagelabels,bookmarks,hyperindex,hyperfigures]{hyperref}

\usepackage{thmtools}

\sloppy

\makeatother

\usepackage{babel}
\providecommand{\corollaryname}{Corollary}
\providecommand{\definitionname}{Definition}
\providecommand{\lemmaname}{Lemma}
\providecommand{\propositionname}{Proposition}
\providecommand{\remarkname}{Remark}
\providecommand{\theoremname}{Theorem}

\begin{document}

\title{Dimension of diagonal self-affine sets and measures via non-conformal
partitions}

\author{\noindent Ariel Rapaport}

\subjclass[2000]{\noindent 28A80, 37C45.}

\keywords{self-affine set, self-affine measure, Hausdorff dimension, affinity
dimension, Lyapunov dimension.}

\thanks{This research was supported by the Israel Science Foundation (grant
No. 619/22). The author is a Horev Fellow at the Technion -- Israel
Institute of Technology.}
\begin{abstract}
Let $\Phi:=\left\{ (x_{1},...,x_{d})\rightarrow\left(r_{i,1}x_{1}+a_{i,1},...,r_{i,d}x_{d}+a_{i,d}\right)\right\} _{i\in\Lambda}$
be an affine diagonal IFS on $\mathbb{R}^{d}$. Suppose that for each
$1\le j_{1}<j_{2}\le d$ there exists $i\in\Lambda$ so that $|r_{i,j_{1}}|\ne|r_{i,j_{2}}|$,
and that for each $1\le j\le d$ the IFS $\left\{ t\rightarrow r_{i,j}t+a_{i,j}\right\} _{i\in\Lambda}$
on the real line is exponentially separated. Under these assumptions
we show that the Hausdorff dimension of the attractor of $\Phi$ is
equal to $\min\left\{ \dim_{A}\Phi,d\right\} $, where $\dim_{A}\Phi$
is the affinity dimension. This follows from a result regarding self-affine
measures, which says that, under the additional assumption that the
linear parts of the maps in $\Phi$ are all contained in a $1$-dimensional
subgroup, the dimension of an associated self-affine measure $\mu$
is equal to the minimum of its Lyapunov dimension and $d$.

Most of the proof is dedicated to an entropy increase result for convolutions
of $\mu$ with general measures $\theta$ of non-negligible entropy,
where entropy is measured with respect to non-conformal partitions
corresponding to the Lyapunov exponents of $\mu$. It turns out that
with respect to these partitions, the entropy across scales of repeated
self-convolutions of $\theta$ behaves quite differently compared
to the conformal case. The analysis of this non-conformal multi-scale
entropy is the main ingredient of the proof, and is also the main
novelty of this paper.
\end{abstract}

\maketitle

\section{Introduction}

\subsection{Background and main results}

Fix $d\ge1$ and let $\Phi=\{\varphi_{i}(x)=A_{i}x+a_{i}\}_{i\in\Lambda}$
be a finite collection of invertible affine contractions of $\mathbb{R}^{d}$.
Such a collection is called an affine iterated function system (IFS).
It is well known (see \cite{Hut}) that there exists a unique nonempty
compact $K_{\Phi}\subset\mathbb{R}^{d}$ with $K_{\Phi}=\cup_{i\in\Lambda}\varphi_{i}(K_{\Phi})$.
It is called the attractor or self-affine set corresponding to $\Phi$.

Many mathematical problems surround self-affine sets, but perhaps
the most natural one is to determine their dimension. It has been
studied by many authors, and its computation is one of the major open
problems in fractal geometry. In what follows, we denote the Hausdorff
dimension of $E\subset\mathbb{R}^{d}$ by $\dim_{H}E$.

In the 1980's, Falconer \cite{falconer1988hausdorff} introduced a
natural upper bound for the dimension of $K_{\Phi}$, which is called
the affinity dimension. It is denoted by $\dim_{A}\Phi$ and depends
only on the linear parts $\{A_{i}\}_{i\in\Lambda}$. Falconer has
shown that when $\Vert A_{i}\Vert_{op}<1/2$ for $i\in\Lambda$, and
under a natural randomisation of the translations $\{a_{i}\}_{i\in\Lambda}$,
the equality
\begin{equation}
\dim_{H}K_{\Phi}=\min\{d,\dim_{A}\Phi\}\label{eq:dim K =00003D dim_A}
\end{equation}
holds almost surely. (In fact, Falconer proved this with $1/3$ as
the upper bound on the norms; it was subsequently shown by Solomyak
\cite{So} that $1/2$ suffices.) The definition of $\dim_{A}\Phi$
in the setting studied in this paper is provided in Section \ref{sec:Proof-of-main thm for sets}.

The last result shows that (\ref{eq:dim K =00003D dim_A}) holds typically,
but does not provide any explicit examples. It is of course desirable
to find explicit and verifiable conditions under which equality holds.
In recent years, and while assuming the collection $\{A_{i}\}_{i\in\Lambda}$
is strongly irreducible, such conditions have been obtained when $d=2$
(see \cite{BHR,HR,MoSh}) and when $d=3$ (see \cite{MoSe,Rap_SA_Rd}).

An important subclass of self-affine systems, which is in a sense
opposite to the strongly irreducible case, is the one in which the
linear parts $\{A_{i}\}_{i\in\Lambda}$ are all diagonal. The goal
of this paper is to establish (\ref{eq:dim K =00003D dim_A}) under
mild assumptions in the diagonal case.

For each $i\in\Lambda$ and $1\le j\le d$ let $0\ne r_{i,j}\in(-1,1)$
and $a_{i,j}\in\mathbb{R}$. We assume that $A_{i}=\mathrm{diag}(r_{i,1},...,r_{i,d})$
and $a_{i}=(a_{i,1},...,a_{i,d})$ for $i\in\Lambda$. That is,
\begin{equation}
\varphi_{i}(x)=\left(r_{i,1}x_{1}+a_{i,1},...,r_{i,d}x_{d}+a_{i,d}\right)\text{ for }(x_{1},...,x_{d})=x\in\mathbb{R}^{d}.\label{eq:form of maps}
\end{equation}
Note that $\Phi_{j}:=\left\{ t\rightarrow r_{i,j}t+a_{i,j}\right\} _{i\in\Lambda}$
is an affine IFS on $\mathbb{R}$ for each $1\le j\le d$. In order
to state our results we shall need the following definition.
\begin{defn}
\label{def:exp sep}Given affine maps $\psi_{1},\psi_{2}:\mathbb{R}\rightarrow\mathbb{R}$
with $\psi_{i}(t)=s_{i}t+b_{i}$ for $i=1,2$, we write
\[
\rho(\psi_{1},\psi_{2}):=\begin{cases}
\infty & \text{ if }s_{1}\ne s_{2}\\
|b_{1}-b_{2}| & \text{ else}
\end{cases}.
\]
An affine IFS $\Psi:=\{\psi_{i}\}_{i\in\Lambda}$ on $\mathbb{R}$
is said to be exponentially separated if there exist $c>0$ and an
infinite $\mathrm{Q}\subset\mathbb{Z}_{>0}$ so that $\rho(\psi_{u_{1}},\psi_{u_{2}})\ge c^{n}$
for all $n\in\mathrm{Q}$ and distinct $u_{1},u_{2}\in\Lambda^{n}$,
where $\psi_{u}:=\psi_{i_{1}}\circ...\circ\psi_{i_{n}}$ for $i_{1}...i_{n}=u\in\Lambda^{n}$.
It is said that $\Psi$ has no exact overlaps if $\psi_{u_{1}}\ne\psi_{u_{2}}$
for all distinct $u_{1},u_{2}\in\Lambda^{*}$, where $\Lambda^{*}$
is the set of finite words over $\Lambda$.
\end{defn}

\begin{rem}
\label{rem:exp set vs exa overl}It is easy to see that $\Psi$ has
no exact overlaps whenever it is exponentially separated. On the other
hand, $\Psi$ is exponentially separated whenever it is defined by
algebraic parameters and has no exact overlaps (see \cite{Ho1}).
\end{rem}

We can now state our main result.
\begin{thm}
\label{thm:main result for sets}Suppose that,
\begin{enumerate}
\item \label{enu:cond reg mod of r_i,j}for each $1\le j_{1}<j_{2}\le d$
there exists $i\in\Lambda$ so that $|r_{i,j_{1}}|\ne|r_{i,j_{2}}|$;
\item \label{enu:exp sep of 1-dim sys}$\Phi_{j}$ is exponentially separated
for each $1\le j\le d$.
\end{enumerate}
Then $\dim_{H}K_{\Phi}=\min\left\{ \dim_{A}\Phi,d\right\} $.
\end{thm}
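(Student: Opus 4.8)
The plan is to establish the two inequalities separately. The bound $\dim_{H}K_{\Phi}\le\min\{\dim_{A}\Phi,d\}$ is the easy one and needs neither hypothesis: $\dim_{H}K_{\Phi}\le d$ is trivial and $\dim_{H}K_{\Phi}\le\dim_{A}\Phi$ is Falconer's general upper bound \cite{falconer1988hausdorff}. So the whole content is the lower bound $\dim_{H}K_{\Phi}\ge\min\{\dim_{A}\Phi,d\}$, and since Hausdorff dimension is monotone under inclusion it is enough to produce, for every $\varepsilon>0$, a compact subset of $K_{\Phi}$ of dimension at least $\min\{\dim_{A}\Phi,d\}-\varepsilon$. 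Such a subset will be the attractor of a sub-IFS of an iterate of $\Phi$ carrying a self-affine measure of nearly optimal dimension, whose dimension is computed by the paper's main result on self-affine measures.

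I would first record the variational description of the affinity dimension in the diagonal setting,
\[
\dim_{A}\Phi=\sup_{p}\dim_{L}\mu_{p},
\]
the supremum being over probability vectors $p=(p_{i})_{i\in\Lambda}$; here $\mu_{p}$ is the self-affine measure obtained by pushing the Bernoulli measure with weights $p$ through the coding map, and $\dim_{L}\mu_{p}$ is its Lyapunov dimension, a function of the entropy $H(p)=-\sum_{i}p_{i}\log p_{i}$ and of the Lyapunov exponents $\chi_{j}(p)=-\sum_{i}p_{i}\log|r_{i,j}|$, $1\le j\le d$. For each $p$ the inequality $\dim_{L}\mu_{p}\le\dim_{A}\Phi$ is standard (the affinity dimension dominates the Lyapunov dimension of every Bernoulli measure); for the reverse inequality one uses that the Lyapunov exponents of any shift-invariant measure are Birkhoff averages of the functions $\log|r_{\cdot,j}|$ and so depend only on its first marginal, while the Bernoulli measure with that marginal has at least as large an entropy and hence at least as large a Lyapunov dimension --- applying this to the K\"aenm\"aki equilibrium measure shows the supremum is attained by a Bernoulli measure. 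Using this, the continuity of $p\mapsto\dim_{L}\mu_{p}$, and hypothesis \ref{enu:cond reg mod of r_i,j} --- which says that for each $j_{1}<j_{2}$ the coefficients $\log|r_{i,j_{1}}|-\log|r_{i,j_{2}}|$ are not all zero, so that $\{p:\chi_{j_{1}}(p)=\chi_{j_{2}}(p)\}$ is a proper affine subspace of the simplex --- I can fix a probability vector $p^{*}$ with $\dim_{L}\mu_{p^{*}}\ge\min\{\dim_{A}\Phi,d\}-\varepsilon$ whose exponents $\chi_{1}(p^{*}),\dots,\chi_{d}(p^{*})$ are pairwise distinct.

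The main step is the reduction to the hypothesis of the self-affine measure theorem, namely that the linear parts lie in a one-parameter subgroup of the diagonal group. Take $n$ large and an integer vector $k=(k_{i})_{i\in\Lambda}$ with $\sum_{i}k_{i}=n$ and $k_{i}/n\to p^{*}_{i}$, and put
\[
\Psi_{k}:=\bigl\{\varphi_{u}:u\in\Lambda^{n}\text{ contains each }i\in\Lambda\text{ exactly }k_{i}\text{ times}\bigr\},
\]
a sub-IFS of $\Phi^{n}$. The point of grouping words by their type is that every map in $\Psi_{k}$ has the \emph{same} linear part $D_{k}:=\prod_{i\in\Lambda}A_{i}^{k_{i}}$, a single diagonal matrix, so $\Psi_{k}$ trivially satisfies the one-parameter-subgroup hypothesis. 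Moreover $D_{k}=\mathrm{diag}\bigl(e^{-n\chi_{j}(p^{*})+o(n)}\bigr)_{j=1}^{d}$, so for all large $n$ the diagonal entries of $D_{k}$ have pairwise distinct moduli and $\Psi_{k}$ satisfies the analogue of \ref{enu:cond reg mod of r_i,j}; and each coordinate IFS of $\Psi_{k}$ is a sub-IFS of an iterate of the exponentially separated $\Phi_{j}$ and inherits exponential separation (after passing, if necessary, to a further iterate), so $\Psi_{k}$ satisfies the analogue of \ref{enu:exp sep of 1-dim sys}. Finally $\Psi_{k}$ has $\#\Psi_{k}=e^{nH(p^{*})+o(n)}$ maps, all with linear part $D_{k}$, so its affinity dimension is the $s$ solving $\#\Psi_{k}\cdot\phi^{s}(D_{k})=1$, where $\phi^{s}$ is the singular value function; feeding in the asymptotics of $\log\#\Psi_{k}$ and of the singular values of $D_{k}$ yields $\dim_{A}\Psi_{k}\to\dim_{L}\mu_{p^{*}}$ as $n\to\infty$. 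The self-affine measure theorem now applies to $\Psi_{k}$: its uniform-weight self-affine measure $\nu_{k}$ --- which, since all maps share the linear part $D_{k}$, is also the K\"aenm\"aki measure of $\Psi_{k}$ --- satisfies $\dim\nu_{k}=\min\{\dim_{L}\nu_{k},d\}=\min\{\dim_{A}\Psi_{k},d\}$. Since $\mathrm{supp}\,\nu_{k}\subseteq K_{\Psi_{k}}\subseteq K_{\Phi^{n}}=K_{\Phi}$, we obtain $\dim_{H}K_{\Phi}\ge\min\{\dim_{A}\Psi_{k},d\}$; letting $n\to\infty$ and then $\varepsilon\to0$ gives $\dim_{H}K_{\Phi}\ge\min\{\dim_{A}\Phi,d\}$, and with the upper bound this proves the theorem.

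In this scheme the variational identity, the verification of the hypotheses for $\Psi_{k}$, and the asymptotics $\dim_{A}\Psi_{k}\to\dim_{L}\mu_{p^{*}}$ are all routine (the one mildly delicate point being that exponential separation along a possibly sparse set of scales is inherited by the iterated coordinate sub-systems). The real difficulty --- and, as stressed in the abstract, the main novelty of the paper --- lies entirely in the self-affine measure theorem. Its proof proceeds through an entropy-increase statement for convolutions $\mu*\theta$ of $\mu$ with measures $\theta$ of non-negligible entropy, where entropy is measured with respect to the non-conformal partitions attached to the Lyapunov exponents of $\mu$; the crux, and the step I expect to be the main obstacle, is the analysis of how the entropy across scales of the repeated self-convolutions of $\theta$ grows in this non-conformal regime, which behaves quite differently from the conformal case.
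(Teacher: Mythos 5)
Your proposal is correct and follows essentially the same route as the paper's proof: both reduce the set statement to Theorem~\ref{thm:main result for measures} by passing to a sub-system of a high iterate of $\Phi$ whose maps all share a single diagonal linear part (so the one-dimensional-subgroup hypothesis is trivially satisfied), whose associated self-affine measure has Lyapunov dimension within $\varepsilon$ of $\dim_{A}\Phi$, and whose exponential separation is inherited from $\Phi$. The one visible difference lies in how that sub-system is selected. You perturb to a nearby weight vector $p^{*}$ with pairwise distinct Lyapunov exponents (using hypothesis~(\ref{enu:cond reg mod of r_i,j}) to show the degenerate locus is a finite union of proper affine subspaces), take the single type class $\Psi_{k}$ with $k\approx np^{*}$, and compute $\dim_{A}\Psi_{k}\to\dim_{L}\mu_{p^{*}}$ via Stirling. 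The paper instead keeps the K\"aenm\"aki weights $p_{i}=\phi^{s}(i)$ and proves $\dim_{L}(\Phi,p)=\dim_{A}\Phi$ directly from the definition rather than via a stated variational principle, then partitions $\Lambda^{n}$ by the exact value of the contraction-ratio vector $(r_{u,1},\dots,r_{u,d})$ and selects a good class via an entropy-concavity bound, the weak law of large numbers, and a CLT-type lemma (Lemma~\ref{lem:by CLT}) that uses hypothesis~(\ref{enu:cond reg mod of r_i,j}) to discard classes in which two diagonal moduli coincide; this lemma plays exactly the role of your perturbation to $p^{*}$, while Lemma~\ref{lem:cont of dim_L} is the paper's counterpart of your continuity-plus-Stirling step. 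Your flagged ``mildly delicate'' point about exponential separation of the coordinate sub-systems is handled by the paper's Lemma~\ref{lem:exp sep proceeds} together with the trivial remark that a sub-family of an exponentially separated IFS remains exponentially separated; no further passage to an iterate is needed.
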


\begin{rem}
\label{rem:past results for sets}In the case $d=1$ the theorem was
obtained by Hochman \cite{Ho1}. When $d=2$ and the similarity dimension
of either $\Phi_{1}$ or $\Phi_{2}$ is at most $1$, Bárány, Rams
and Simon \cite{BaRaSi} have applied Hochman's work in order to deduce
the theorem.
\end{rem}

\begin{rem}
It is not difficult to modify \cite[Example 1.2]{Ho} in order to
construct a homothetic IFS $\Psi:=\left\{ \psi_{i}(x)=rx+(b_{i,1},b_{i,2})\right\} _{i\in I}$
on $\mathbb{R}^{2}$ with attractor $K_{\Psi}$, such that $\left\{ t\rightarrow rt+b_{i,j}\right\} _{i\in I}$
is exponentially separated for $j=1,2$, $K_{\Psi}$ is not contained
in an affine line, and $\dim_{H}K_{\Psi}<\min\left\{ \dim_{A}\Psi,2\right\} $.
This shows that Condition (\ref{enu:cond reg mod of r_i,j}) cannot
be dropped from the statement of the theorem.
\end{rem}

\begin{rem}
As demonstrated by various carpet-like examples (see e.g. \cite{MR2298824,Bed,MR2947936,MR1183358,Mcm}),
it is necessary to assume that the systems $\Phi_{j}$ have no exact
overlaps. Ideally, one would like to prove Theorem \ref{thm:main result for sets}
under this assumption instead of Condition (\ref{enu:exp sep of 1-dim sys}).
Unfortunately, at this point, this is well beyond our reach. Indeed,
this has not been achieved even when $d=1$, in which case the validity
of such a statement is considered one of the major open problems in
fractal geometry (see \cite{MR3966837,Var_ICM}). Note however that
by Remark \ref{rem:exp set vs exa overl}, Condition (\ref{enu:exp sep of 1-dim sys})
always holds whenever $\Phi$ is defined by algebraic parameters and
$\Phi_{j}$ has no exact overlaps for $1\le j\le d$.
\end{rem}

Theorem \ref{thm:main result for sets} will follow from a result
regarding self-affine measures. Let $p=(p_{i})_{i\in\Lambda}$ be
a probability vector. It is well known (see \cite{Hut}) that there
exists a unique Borel probability measure $\mu$ on $\mathbb{R}^{d}$
which satisfies the relation $\mu=\sum_{i\in\Lambda}p_{i}\cdot\varphi_{i}\mu$,
where $\varphi_{i}\mu$ is the push-forward of $\mu$ via $\varphi_{i}$.
It is supported on $K_{\Phi}$, and is called the self-affine measure
corresponding to $\Phi$ and $p$.

In recent years, it has been proven that self-affine measures are
always exact dimensional (see \cite{FH-dimension} for the diagonal
case and \cite{BK,MR4557759} for the general case). That is, there
exists a number $\dim\mu$ so that
\[
\underset{\delta\downarrow0}{\lim}\:\frac{\log\mu(B(x,\delta))}{\log\delta}=\dim\mu\text{ for }\mu\text{-a.e. }x,
\]
where $B(x,\delta)$ is the closed ball with centre $x$ and radius
$\delta$.

Recall that we assume that the maps $\varphi_{i}$ are of the form
(\ref{eq:form of maps}). For $1\le j\le d$ set $\chi_{j}:=-\sum_{i\in\Lambda}p_{i}\log|r_{i,j}|$.
It is easy to verify that $\chi_{1},...,\chi_{d}$ are equal to the
Lyapunov exponents corresponding to $\{A_{i}\}_{i\in\Lambda}$ and
$p$ (as defined e.g. in \cite[Section III.5]{BL}).

As in the case of self-affine sets, there exists a natural upper bound
for the dimension of $\mu$. It is denoted by $\dim_{L}(\Phi,p)$,
is called the Lyapunov dimension corresponding to $\Phi$ and $p$,
and depends only on $\chi_{1},...,\chi_{d}$ and the entropy of $p$.
It has been shown by Jordan, Pollicott and Simon \cite{JPS} that
when $\Vert A_{i}\Vert_{op}<1/2$ for $i\in\Lambda$, and under a
natural randomisation of the translations $\{a_{i}\}_{i\in\Lambda}$,
the equality $\dim\mu=\min\{d,\dim_{L}(\Phi,p)\}$ holds almost surely.
The definition of $\dim_{L}(\Phi,p)$ is provided in Section \ref{subsec:Proof-of-main Theorem for measures}
below.

We shall say that the linear parts of $\Phi$ are contained in a $1$-dimensional
subgroup if there exist $c_{1},...,c_{d}\in\mathbb{R}_{>0}$ so that,
\[
\left(|r_{i,1}|,...,|r_{i,d}|\right)\in\left\{ \left(c_{1}^{t},...,c_{d}^{t}\right)\::\:t\in\mathbb{R}\right\} \text{ for each }i\in\Lambda.
\]
In particular this is the case when $\Phi$ is homogeneous, i.e. when
$A_{i_{1}}=A_{i_{2}}$ for all $i_{1},i_{2}\in\Lambda$. We can now
state our main result regarding self-affine measures.
\begin{thm}
\label{thm:main result for measures}Suppose that,
\begin{enumerate}
\item \label{enu:cond dist LY-exp}$\chi_{j_{1}}\ne\chi_{j_{2}}$ for all
$1\le j_{1}<j_{2}\le d$;
\item $\Phi_{j}$ is exponentially separated for each $1\le j\le d$;
\item \label{enu:cond reg 1-dim sub}the linear parts of $\Phi$ are contained
in a $1$-dimensional subgroup.
\end{enumerate}
Then $\dim\mu=\min\left\{ \dim_{L}(\Phi,p),d\right\} $.
\end{thm}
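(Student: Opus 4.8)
The plan is to follow the by-now-standard route for proving that a self-affine (or self-similar) measure attains its predicted dimension: establish a local-entropy growth dichotomy and invoke it together with a suitable version of the inverse theorem for entropy. Concretely, one writes $\mu$ as an infinite convolution of the measures $\varphi_u\mu$ over cylinders, so that for a large scale parameter $n$ the measure $\mu$ is, up to small error, a self-convolution of a ``component'' measure. By exact dimensionality (known for diagonal self-affine measures, cited above), it suffices to rule out the possibility $\dim\mu < \min\{\dim_L(\Phi,p),d\}$. One does this by showing that if the dimension were deficient, then the entropy of $\mu$ at scale $2^{-n}$, measured with respect to the non-conformal partition $\mathcal{D}_n$ into boxes of side-lengths $2^{-n\chi_j/\chi_1}$ (or some normalization of the $\chi_j$), would have to grow under self-convolution — contradicting the fact that $\mu$ is stationary and hence its multi-scale entropy is essentially fixed.

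The key steps, in order, are as follows. (i) Set up the non-conformal dyadic-type partitions adapted to the Lyapunov exponents $\chi_1,\dots,\chi_d$, using Condition (3) so that the linear parts act (up to bounded distortion) as a one-parameter diagonal flow; record the basic covering and entropy-comparison lemmas relating $H(\mu,\mathcal{D}_n)$ to $n\cdot\min\{\dim_L,d\}$. (ii) Prove the projection/slicing statements: because the one-dimensional systems $\Phi_j$ are exponentially separated, each coordinate projection $\pi_j\mu$ has full dimension $\min\{1, h/\chi_j\}$ by Hochman's theorem, and more generally the conditional measures of $\mu$ on the fibers of coordinate subspaces inherit exponential separation in the remaining coordinates. (iii) State and prove the entropy-increase result advertised in the abstract: for any measure $\theta$ on $\mathbb{R}^d$ with non-negligible entropy at scale $n$ with respect to $\mathcal{D}_n$, the convolution $\theta * \mu$ has strictly larger such entropy, with a quantitative gain depending only on the starting entropy. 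This is proved via the inverse theorem for the entropy of convolutions, combined with an analysis of how, at each scale between $1$ and $n$, the ``shape'' of the partition elements (long and thin in directions where $\chi_j$ is small) interacts with the local structure of $\mu$; Condition (1), $\chi_{j_1}\neq\chi_{j_2}$, is what forces the box shapes to genuinely change across scales and prevents the conformal degeneracy. (iv) Bootstrap from the entropy-increase statement to the dimension conclusion by the standard argument: iterate convolution, use that $\mu * \mu$ has the same dimension as $\mu$, and derive a contradiction unless $\dim\mu$ already equals $\min\{\dim_L(\Phi,p),d\}$.

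I expect the main obstacle to be step (iii), and within it the fact — flagged in the abstract — that the multi-scale behaviour of entropy under repeated self-convolution is genuinely different from the conformal case. In the conformal setting one tracks a single scale parameter and the entropy at scale $2^{-n}$ of $\theta^{*k}$ is controlled by a clean telescoping over dyadic scales; here the partition $\mathcal{D}_n$ is anisotropic, and the entropy contributed at an intermediate scale $m < n$ lives on boxes whose aspect ratios are themselves varying, so the relevant ``local entropy'' is a $d$-parameter object and does not simply add up across scales. The technical heart of the paper will be isolating the right notion of non-conformal multi-scale entropy, proving that it is (almost) additive in an appropriate averaged sense, and showing that a deficiency of $\dim\mu$ forces a uniform lower bound on the per-scale entropy gap that survives the averaging. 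Once that analytic machinery is in place, combining it with the coordinatewise exponential separation (to seed the entropy via Hochman's theorem) and with exact dimensionality (to pass from entropy to Hausdorff dimension) should close the argument; the deduction of Theorem \ref{thm:main result for sets} from Theorem \ref{thm:main result for measures} is then the routine matter of choosing $p$ to make $\dim_L(\Phi,p)$ approach $\dim_A\Phi$ and handling the reduction to the one-dimensional-subgroup case.
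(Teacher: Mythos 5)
Your broad strategy—anisotropic partitions $\mathcal{E}_n$ adapted to the Lyapunov exponents, an entropy-increase statement for convolutions with $\mu$, seeding entropy via exponential separation, and passing to dimension via exact dimensionality—does match the paper's. You also correctly anticipate that the multi-scale entropy analysis of $\theta^{*k}$ in the non-conformal setting is the novel and hard part. But three structural pieces are missing or wrong, and the first of these is a genuine gap that would prevent the argument from closing.

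First, and most importantly, you omit the induction on $d$. The entropy-increase theorem (the paper's Theorem \ref{thm:ent inc result}) is only valid under the additional hypothesis that $\dim\pi_J\mu=|J|$ for every proper $J\subset[d]$. If some coordinate projection $\pi_J\mu$ has deficient dimension, the entropy-increase mechanism breaks (one cannot hope for $H(\theta*\mu,\mathcal{E}_n)$ to exceed $\kappa$ by a fixed amount, since the convolution cannot fill directions $\mu$ itself does not fill). The paper handles this by inducting on $d$: $\pi_J\mu$ is itself a diagonal self-affine measure satisfying all three hypotheses in lower dimension, so the induction hypothesis gives its dimension exactly; one then splits into cases (some proper projection deficient vs.\ all full) and, in the deficient case, uses the Ledrappier–Young formula (Theorem \ref{thm:LY formula}) rather than the entropy-increase result. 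Without this case split and without invoking Ledrappier–Young, the argument simply does not cover all the measures the theorem addresses.

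Second, your step (iii) says the entropy increase ``is proved via the inverse theorem for the entropy of convolutions.'' The paper explicitly does \emph{not} use Hochman's inverse theorem here; that tool tracks a single scale parameter and is a conformal object. The replacement is Proposition \ref{prop:full ent of slices}, which is proved by a direct Berry--Esseen argument: for a nonnegligible set of scales one of the $\theta^{*k_j}$ has nearly full entropy in narrow tubes in a single coordinate direction $e_j$. You do flag that the conformal telescoping fails, but then fall back on the very tool that would require it. Third, your step (iv) bootstrap (``iterate convolution, use that $\mu*\mu$ has the same dimension as $\mu$'') is not the paper's mechanism either; there is no $\mu*\mu$ anywhere. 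Instead one encodes the $n$-step cylinder decomposition as a measure $\nu^{(n)}$ on $\mathrm{Aff}(d)$, uses exponential separation of the $\Phi_j$ to get $\frac{1}{n}H(\nu^{(n)},\mathrm{ev}_0^{-1}(\mathcal{E}_{Mn})\vee\mathcal{L}_{\mathrm{Aff}(d)})=H(p)$ for some fixed $M$ and infinitely many $n$, then applies a contradiction argument (Theorem \ref{thm:lim of cond ent =00003D 0}) together with the Kaimanovich--Vershik lemma to show $\kappa=H(p)$, which unwinds to the dimension formula. The ``$\mu$ is stationary so its entropy is fixed'' intuition is in the right spirit, but the precise argument lives on the affine group, not on repeated self-convolutions of $\mu$.
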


\begin{rem}
The author expects Theorem \ref{thm:main result for measures} to
remain true even without assuming Condition (\ref{enu:cond reg 1-dim sub}).
Unfortunately, our argument crucially depends on this assumption,
and so it cannot be omitted at this point.
\end{rem}

\subsection{A concrete example}

Write,
\[
\Omega_{d}:=\left\{ (r_{1},...,r_{d})\in(0,1)^{d}\::\:r_{j}>r_{j+1}\text{ for }1\le j<d\right\} .
\]
For $(r_{1},...,r_{d})=\bar{r}\in\Omega_{d}$ set
\[
\Phi_{\bar{r}}:=\left\{ (x_{1},...,x_{d})\rightarrow(r_{1}x_{1},...,r_{d}x_{d})\pm(1,..,1)\right\} ,
\]
so that $\Phi_{\bar{r}}$ is an affine IFS on $\mathbb{R}^{d}$ consisting
of two maps. Write $K_{\bar{r}}$ for the attractor of $\Phi_{\bar{r}}$,
and $\mu_{\bar{r}}$ for the self-affine measure corresponding to
$\Phi_{\bar{r}}$ and the probability vector $(1/2,1/2)$. When $d=2$,
these fractal objects have been studied by many authors throughout
the years (see e.g. \cite{MR3590500,MR1301464,MR1002918,MR2269414}
and the references therein). The sets $K_{\bar{r}}$ are perhaps the
simplest nontrivial self-affine sets on $\mathbb{R}^{d}$ which are
not self-similar. The measures $\mu_{\bar{r}}$ can be viewed as a
higher dimensional analog of the family of Bernoulli convolutions.

For $(r_{1},...,r_{d})=\bar{r}\in\Omega_{d}$ set,
\[
m_{\bar{r}}:=\max\left\{ 0\le k\le d\::\:\Pi_{j=1}^{k}r_{j}\ge1/2\right\} .
\]
It follows easily from the definitions of the affinity and Lyapunov
dimensions that with $m=m_{\bar{r}}$,
\[
\dim_{A}\Phi_{\bar{r}}=\dim_{L}\left(\Phi_{\bar{r}},(1/2,1/2)\right)=\begin{cases}
m+\frac{\log2+\sum_{j=1}^{m}\log r_{j}}{-\log r_{m+1}} & ,\text{ if }m<d\\
d\frac{\log2}{-\sum_{j=1}^{d}\log r_{j}} & ,\text{ if }m=d
\end{cases}.
\]

It is desirable to find conditions under which,
\begin{equation}
\dim_{H}K_{\bar{r}}=\dim\mu_{\bar{r}}=\min\left\{ \dim_{A}\Phi_{\bar{r}},d\right\} .\label{eq:exp eq in examp}
\end{equation}
It is easy to verify that (\ref{eq:exp eq in examp}) holds for all
$\bar{r}\in\Omega_{d}$ with $r_{1}\le1/2$. In the planar case $d=2$,
Shmerkin \cite{MR2269414} has proven that $\dim_{H}K_{\bar{r}}=\dim_{A}\Phi_{\bar{r}}$
for $\mathcal{L}_{2}$-a.e. $(r_{1},r_{2})=\bar{r}\in\Omega_{2}$
with $r_{1}r_{2}<1/2<r_{1}$, where $\mathcal{L}_{2}$ is the Lebesgue
measure. By applying the results of the previous section, it is possible
to improve this statement. In what follows, we denote the packing
dimension of $E\subset\mathbb{R}^{d}$ by $\dim_{p}E$. Recall that
the inequality $\dim_{H}E\le\dim_{p}E$ is always satisfied.
\begin{cor}
There exists $F\subset(0,1)^{d}$ with $\dim_{p}F=d-1$ so that (\ref{eq:exp eq in examp})
holds for all $\bar{r}\in\Omega_{d}\setminus F$.
\end{cor}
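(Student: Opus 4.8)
The plan is to reduce the corollary to Theorem~\ref{thm:main result for measures} by identifying a small exceptional set $F$ off which the hypotheses of that theorem are satisfied. Fix $\bar r=(r_1,\dots,r_d)\in\Omega_d$ and consider $\Phi_{\bar r}$ together with the probability vector $(1/2,1/2)$. The linear parts are already homogeneous, so Condition~(\ref{enu:cond reg 1-dim sub}) holds automatically for every $\bar r$. Since the two translations are $(1,\dots,1)$ and $-(1,\dots,1)$, the $j$-th coordinate system is $\Phi_{\bar r,j}=\{t\mapsto r_j t+1,\ t\mapsto r_j t-1\}$, a homogeneous two-map similarity IFS on $\mathbb R$ with contraction ratio $r_j$. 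For homogeneous self-similar systems on the line with two maps, exponential separation is known to hold outside an exceptional set of parameters of packing (indeed Hausdorff) dimension zero; more precisely, one invokes Hochman's theorem \cite{Ho1} in the one-parameter form: for each $j$, the set of $r_j\in(0,1)$ for which $\Phi_{\bar r,j}$ has an exact overlap — equivalently, for which exponential separation fails — has packing dimension $0$. Call this set $E_j\subset(0,1)$.

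Next I would assemble the exceptional set. Condition~(\ref{enu:cond dist LY-exp}) for the vector $(1/2,1/2)$ reads $\chi_{j_1}\ne\chi_{j_2}$, i.e.\ $-\tfrac12\log r_{j_1}-\tfrac12\log r_{j_1}\ne\cdots$; concretely $\chi_j=-\log r_j$, so the condition is exactly $r_{j_1}\ne r_{j_2}$, which already holds on $\Omega_d$ (the $r_j$ are strictly decreasing). Hence the only obstruction is exponential separation of the $d$ one-dimensional systems, which fails only if $r_j\in E_j$ for some $j$. Set
\[
F:=\Omega_d\cap\bigcup_{j=1}^{d}\bigl\{(r_1,\dots,r_d)\in(0,1)^d:\ r_j\in E_j\bigr\}.
\]
For every $\bar r\in\Omega_d\setminus F$, all three hypotheses of Theorem~\ref{thm:main result for measures} hold, so $\dim\mu_{\bar r}=\min\{\dim_L(\Phi_{\bar r},(1/2,1/2)),d\}=\min\{\dim_A\Phi_{\bar r},d\}$ (using the computation of $\dim_A=\dim_L$ from the displayed formula). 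The equality $\dim_H K_{\bar r}=\dim\mu_{\bar r}$ then follows because $\dim_H K_{\bar r}\le\min\{\dim_A\Phi_{\bar r},d\}$ always (affinity dimension is an upper bound for the Hausdorff dimension of the attractor) while $\dim_H K_{\bar r}\ge\dim\mu_{\bar r}$ since $\mu_{\bar r}$ is supported on $K_{\bar r}$; this gives (\ref{eq:exp eq in examp}) for all $\bar r\in\Omega_d\setminus F$. Alternatively one may apply Theorem~\ref{thm:main result for sets} directly for the set statement, its hypotheses being verified by the same reasoning.

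It remains to bound $\dim_p F$. Each set in the union is a "coordinate slab" $\{r_j\in E_j\}$ inside $(0,1)^d$, which is contained in a product of $E_j$ in one coordinate and $(0,1)$ in the remaining $d-1$ coordinates; since $\dim_p E_j=0$ and packing dimension of a product is at most the sum of packing dimensions (with $\dim_p(0,1)^{d-1}=d-1$), each slab has packing dimension at most $d-1$. A finite union does not increase packing dimension, so $\dim_p F\le d-1$. For the reverse inequality, note that $F$ must actually have packing dimension exactly $d-1$: e.g.\ fix any $r_1\in E_1$ (this set is nonempty, as exact overlaps do occur for suitable algebraic ratios), and then the slice $\{r_1\}\times\{(r_2,\dots,r_d):\ (r_1,\dots,r_d)\in\Omega_d\}$ is an open subset of $\{r_1\}\times\mathbb R^{d-1}$ and lies in $F$, forcing $\dim_p F\ge d-1$. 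Combining the two bounds gives $\dim_p F=d-1$, as claimed.

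The main obstacle is the first step: one needs the \emph{packing}-dimension (not merely Lebesgue- or Hausdorff-) estimate on the set of contraction ratios $r\in(0,1)$ for which the two-map homogeneous similarity IFS $\{rt\pm1\}$ fails to be exponentially separated. By Hochman's results this failure set is contained in the set of $r$ admitting an exact overlap, which is a countable union of real-algebraic (in fact, it is the zero set of a countable family of nonzero analytic functions of $r$, namely the differences $\psi_{u_1}(0)-\psi_{u_2}(0)$) hence has Hausdorff and packing dimension $0$; here one must be slightly careful that exponential separation for a \emph{fixed} $r$ can be extracted whenever $r$ is algebraic and there are no exact overlaps (Remark~\ref{rem:exp set vs exa overl}), and for transcendental $r$ one uses that the failure of exponential separation along every infinite subsequence is a much stronger, measure-zero and dimension-zero condition — this is exactly the content one imports from \cite{Ho1}. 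Once that input is in hand, the remainder is the elementary packing-dimension bookkeeping sketched above.
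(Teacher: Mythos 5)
Your proposal follows the paper's route: define $E$ (the set of $r\in(0,1)$ for which $\{t\mapsto rt\pm1\}$ fails exponential separation), import $\dim_{p}E=0$ from the proof of \cite[Theorem 1.9]{Ho1}, assemble $F$ as the union of coordinate slabs, and apply the main theorems. Your packing-dimension bookkeeping (both the upper bound via finite unions of zero-dimensional slabs and the lower bound via a nonempty slice over an exact-overlap parameter) is correct, and your sandwich argument $\dim\mu_{\bar r}\le\dim_{H}K_{\bar r}\le\min\{\dim_{A}\Phi_{\bar r},d\}$ gives the set equality without a separate invocation of Theorem \ref{thm:main result for sets}, a perfectly acceptable minor variant.

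One sentence in your justification is factually backwards and worth flagging even though you do not ultimately rely on it: you write that ``by Hochman's results this failure set is contained in the set of $r$ admitting an exact overlap.'' The containment runs the other way. The exact-overlap parameters form a countable set (zeros of polynomials with coefficients in $\{\pm1,0\}$), trivially of packing dimension zero, and they are a \emph{subset} of $E$. The substance of \cite[Theorem 1.9]{Ho1} is precisely that the strictly larger set $E$ --- which also contains parameters where $\rho(\psi_{u_1},\psi_{u_2})$ merely decays super-exponentially without ever vanishing --- \emph{still} has packing dimension zero. Your later caveat about transcendental $r$ gestures at this, but the earlier ``contained in'' and ``equivalently'' statements should be retracted; the correct input is the paper's: $\dim_{p}E=0$ is a theorem of Hochman about the full failure set, not a consequence of counting exact overlaps.
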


\begin{proof}
Let $E$ be the set of all $0<r<1$ for which the IFS $\left\{ t\rightarrow rt\pm1\right\} $
on $\mathbb{R}$ is not exponentially separated. As explained in the
proof of \cite[Theorem 1.9]{Ho1}, it holds that $\dim_{p}E=0$. Let
$F$ be the set of all $(r_{1},...,r_{d})\in(0,1)^{d}$ for which
$r_{j}\in E$ for some $1\le j\le d$. From $\dim_{p}E=0$ it follows
easily that $\dim_{p}F=d-1$. The corollary now follows directly from
Theorems \ref{thm:main result for sets} and \ref{thm:main result for measures}.
\end{proof}
\begin{rem}
Shmerkin \cite{MR2269414} has also shown that there exists a nonempty
open set $U\subset\left\{ (r_{1},r_{2})\in\Omega_{2}\::\:1/2<r_{1}r_{2}\right\} $
so that $\mu_{\bar{r}}\ll\mathcal{L}_{2}$ for $\mathcal{L}_{2}$-a.e.
$\bar{r}\in U$. By combining Theorem \ref{thm:main result for measures}
together with a result of Solomyak, it should be possible to improve
this result as well. Indeed, by \cite[Theorem 1.2]{MR4461215} it
follows that $\mu_{\bar{r}}$ has power Fourier decay for $\mathcal{L}_{d}$-a.e.
$\bar{r}\in\Omega_{d}$. By applying this together with Theorem \ref{thm:main result for measures},
it seems possible to use the argument appearing in the proof of \cite[Theorem 1.2]{MR3213835}
in order to show that $\mu_{\bar{r}}\ll\mathcal{L}_{d}$ for $\mathcal{L}_{d}$-a.e.
$(r_{1},...,r_{d})\in\Omega_{d}$ with $\Pi_{j=1}^{d}r_{j}>1/2$.
We do not pursue this however.
\end{rem}

We finish this subsection with a statement valid for concrete parameters.
Write $\mathcal{P}_{\{\pm1,0\}}$ for the set of polynomials in one
variable with coefficients $\pm1$ and $0$. Given $0<r<1$, it is
easy to see that the IFS $\left\{ t\rightarrow rt\pm1\right\} $ has
exact overlaps if and only if $r$ is a root of some nonzero $P\in\mathcal{P}_{\{\pm1,0\}}$.
The following corollary follows directly from this fact, from Remark
\ref{rem:exp set vs exa overl}, and from Theorems \ref{thm:main result for sets}
and \ref{thm:main result for measures}.
\begin{cor}
The equalities in (\ref{eq:exp eq in examp}) hold for every $(r_{1},...,r_{d})=\bar{r}\in\Omega_{d}$
so that $r_{1},...,r_{d}$ are algebraic numbers and $P(r_{j})\ne0$
for all nonzero $P\in\mathcal{P}_{\{\pm1,0\}}$ and $1\le j\le d$.
\end{cor}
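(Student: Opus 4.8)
The plan is purely one of hypothesis-checking: for a fixed $\bar{r}=(r_{1},\dots,r_{d})\in\Omega_{d}$ satisfying the stated algebraicity and non-root conditions, I would verify that $\Phi_{\bar{r}}$, together with the probability vector $p=(1/2,1/2)$, meets all the assumptions of both Theorem~\ref{thm:main result for sets} and Theorem~\ref{thm:main result for measures}, and then read off (\ref{eq:exp eq in examp}) from those two theorems and from the displayed identity $\dim_{A}\Phi_{\bar{r}}=\dim_{L}(\Phi_{\bar{r}},(1/2,1/2))$.

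First I would dispose of the ``distinct ratios'' conditions. Since $\bar{r}\in\Omega_{d}$ we have $r_{1}>r_{2}>\dots>r_{d}>0$, so $r_{1},\dots,r_{d}$ are pairwise distinct positive reals; as $\Phi_{\bar{r}}$ is homogeneous with common linear part $\mathrm{diag}(r_{1},\dots,r_{d})$, this is precisely Condition~(\ref{enu:cond reg mod of r_i,j}) of Theorem~\ref{thm:main result for sets}. For $p=(1/2,1/2)$ one computes $\chi_{j}=-\log r_{j}$, so the $\chi_{j}$ are pairwise distinct, which is Condition~(\ref{enu:cond dist LY-exp}) of Theorem~\ref{thm:main result for measures}; and homogeneity immediately gives Condition~(\ref{enu:cond reg 1-dim sub}) of that theorem, as noted after its statement.

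The only remaining assumption is the exponential separation of the one-dimensional systems $\Phi_{j}=\{t\mapsto r_{j}t\pm1\}$, which is Condition~(\ref{enu:exp sep of 1-dim sys}) of both theorems. Here I would use the elementary fact recalled just before the corollary: $\Phi_{j}$ has an exact overlap if and only if $r_{j}$ is a root of some nonzero $P\in\mathcal{P}_{\{\pm1,0\}}$. By hypothesis no such relation holds, so $\Phi_{j}$ has no exact overlaps; and since $r_{j}$ (and the translations $\pm1$) are algebraic, Remark~\ref{rem:exp set vs exa overl} upgrades this to exponential separation of $\Phi_{j}$. With all hypotheses in place, Theorem~\ref{thm:main result for sets} yields $\dim_{H}K_{\bar{r}}=\min\{\dim_{A}\Phi_{\bar{r}},d\}$, Theorem~\ref{thm:main result for measures} yields $\dim\mu_{\bar{r}}=\min\{\dim_{L}(\Phi_{\bar{r}},(1/2,1/2)),d\}$, and the identity $\dim_{A}\Phi_{\bar{r}}=\dim_{L}(\Phi_{\bar{r}},(1/2,1/2))$ shows the two agree, which is (\ref{eq:exp eq in examp}). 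I do not expect any genuine obstacle: the substance of the corollary sits entirely in the two main theorems, and the only points requiring care are that $\Omega_{d}$ is defined with strict inequalities (so the contraction ratios really are distinct) and that the ``algebraic parameters plus no exact overlaps implies exponentially separated'' part of Remark~\ref{rem:exp set vs exa overl} is applied to each $\Phi_{j}$ separately.
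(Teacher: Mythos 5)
Your proposal is correct and is essentially the paper's own argument: the paper also deduces the corollary directly from the exact-overlap characterization of $\mathcal{P}_{\{\pm1,0\}}$, Remark~\ref{rem:exp set vs exa overl}, and Theorems~\ref{thm:main result for sets} and~\ref{thm:main result for measures}. You have simply spelled out the hypothesis-checking (distinct ratios from strict inequalities in $\Omega_{d}$, homogeneity giving the $1$-dimensional subgroup condition, and algebraicity plus no exact overlaps upgrading to exponential separation for each $\Phi_{j}$) that the paper leaves implicit.
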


\subsection{\label{subsec:About-the-proof}About the proof}

Theorem \ref{thm:main result for sets} will be obtained as a corollary
of Theorem \ref{thm:main result for measures}. In this subsection
we discuss the proof of Theorem \ref{thm:main result for measures}.
The proof is carried out by induction on the dimension $d$ of the
ambient space. Thus, suppose that the theorem in known to hold for
all $d'\ge1$ with $d'<d$.

Write $[d]:=\{1,...,d\}$ and for $J\subset[d]$ denote by $\pi_{J}$
the orthogonal projection onto the linear span of $\{e_{j}\}_{j\in J}$,
where $\{e_{1},...,e_{d}\}$ is the standard basis of $\mathbb{R}^{d}$.
Note that $\pi_{J}\mu$ is (an embedded copy of) a self-affine measure
on $\mathbb{R}^{|J|}$ for which the conditions of Theorem \ref{thm:main result for measures}
are also satisfied. Based on this observation, the induction hypothesis,
the Ledrappier-Young formula for self-affine measures (see Section
\ref{subsec:Ledrappier-Young-formula}), and by an argument inspired
by ideas from \cite{BaRaSi} (see Remark \ref{rem:past results for sets}),
it is not difficult to prove that $\dim\mu=\dim_{L}(\Phi,p)$ whenever
there exists a proper subset $J$ of $[d]$ so that $\dim\pi_{J}\mu<|J|$.
Thus, suppose that
\begin{equation}
\dim\pi_{J}\mu=|J|\text{ for every proper subset \ensuremath{J} of \ensuremath{[d]}.}\label{eq:assump reg proj in sketch}
\end{equation}

By assumption the numbers $\chi_{1},...,\chi_{d}$ are distinct. Without
loss of generality, assume that $\chi_{j}<\chi_{j+1}$ for $1\le j<d$.
For each $n\ge0$ define the following non-conformal partition of
$\mathbb{R}^{d}$,
\[
\mathcal{E}_{n}:=\left\{ D_{1}\times...\times D_{d}\::\:D_{j}\in\mathcal{D}_{\chi_{j}n}^{1}\text{ for }1\le j\le d\right\} .
\]
Here, $\mathcal{D}_{t}^{1}$ denotes the level-$\left\lfloor t\right\rfloor $
dyadic partition of $\mathbb{R}$ for $t\in\mathbb{R}_{\ge0}$. Write,
\[
\kappa:=\chi_{d}\dim\mu-\sum_{j=1}^{d-1}(\chi_{d}-\chi_{j}).
\]
As we will see in Section \ref{sec:Asymptotic-entropies-of mu}, from
$\dim\pi_{[d-1]}\mu=d-1$ it follows that
\begin{equation}
\underset{n}{\lim}\:\frac{1}{n}H\left(\mu,\mathcal{E}_{n}\right)=\kappa,\label{eq:=00003Dkappa}
\end{equation}
where $H\left(\mu,\mathcal{E}_{n}\right)$ is the entropy of $\mu$
with respect to $\mathcal{E}_{n}$.

By employing ideas of Hochman, which are used in \cite{Ho1} for the
case $d=1$, we will see that, in order to complete the induction
(and hence the proof), it suffices to prove the following entropy
increase result. In what follows we write $\mathcal{M}_{\mathrm{c}}(\mathbb{R}^{d})$
for the collection of compactly supported Borel probability measures
on $\mathbb{R}^{d}$.
\begin{thm}
\label{thm:ent inc result}Suppose that $\dim\mu<d$ and that $\dim\pi_{J}\mu=|J|$
for each proper subset $J$ of $[d]$. Then for every $0<\epsilon<1$
there exists $\delta=\delta(\epsilon)>0$ so that the following holds.
Let $n\ge N(\epsilon)\ge1$ and $\theta\in\mathcal{M}_{\mathrm{c}}(\mathbb{R}^{d})$
be with $\mathrm{diam}(\mathrm{supp}(\theta))\le\epsilon^{-1}$ and
$\frac{1}{n}H(\theta,\mathcal{E}_{n})>\epsilon$. Then $\frac{1}{n}H(\theta*\mu,\mathcal{E}_{n})>\kappa+\delta$.
\end{thm}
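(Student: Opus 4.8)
The plan is to follow the template of Hochman's inverse theorem for entropy of convolutions, adapted to the non-conformal partitions $\mathcal{E}_n$. The key structural fact driving the proof is that $\mu$ has an approximate self-similarity: iterating the relation $\mu=\sum_i p_i\,\varphi_i\mu$ lets us decompose $\mu$ at scale $n$ into pieces $\varphi_u\mu$ with $u\in\Lambda^*$, and under Condition (\ref{enu:cond reg 1-dim sub}) the linear part of $\varphi_u$ is $\mathrm{diag}(c_1^{t_u},\dots,c_d^{t_u})$ for a scalar $t_u$, so that $\varphi_u$ pulls back the partition $\mathcal{E}_n$ to (essentially) $\mathcal{E}_{n-m}$ for an appropriate $m$ depending on $t_u$. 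This is what makes the non-conformal partitions compatible with the self-affine structure and is why the one-dimensional-subgroup hypothesis is needed.

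The argument would proceed as follows. First, I would set up a multi-scale entropy decomposition: using the component/martingale language, write $\frac1n H(\theta*\mu,\mathcal{E}_n)$ in terms of an average over scales $0\le i\le n$ of the local entropies $H(\theta_{x,i}*\mu_{y,i},\mathcal{E}_{i+1}\mid\mathcal{E}_i)$ of components of $\theta$ and $\mu$. Here, because the partitions $\mathcal{E}_n$ are non-conformal — the side lengths in coordinate $j$ shrink at rate $\chi_j$ — a single "scale" $i$ of $\mathcal{E}$ does not correspond to a cube, and the entropy budget $\kappa$ in (\ref{eq:=00003Dkappa}) is strictly smaller than $\chi_1\dim\mu$; the whole point is that the per-scale entropy of $\mu$, namely $\kappa/(\chi_1)$ appropriately normalised, leaves room for a gain. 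Second, I would invoke the inverse theorem for the non-conformal entropy of convolutions (the analogue of Hochman's theorem, proved earlier in the paper via the analysis of non-conformal multi-scale entropy that the abstract advertises): if $\frac1n H(\theta*\mu,\mathcal{E}_n)$ fails to exceed $\kappa$ by a definite amount, then for a large proportion of scales $i$, the component $\mu_{y,i}$ is, at scale $i$, close to being supported on a proper coordinate subspace translate — i.e. it "concentrates" in some subset of coordinates $J\subsetneq[d]$. Third, I would derive a contradiction with the hypothesis $\dim\pi_J\mu=|J|$ for all proper $J$: concentration of a positive-measure set of components of $\mu$ onto coordinate subspaces forces, via the Ledrappier–Young formula and a Fubini/ergodic-averaging argument over the symbolic coding, that $\dim\pi_{J}\mu<|J|$ for some proper $J$, which is excluded.

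The main obstacle — and the genuinely new ingredient — is the second step: proving the inverse theorem in the non-conformal setting. In Hochman's conformal case, the dichotomy "entropy grows under convolution, or the measure is close to uniform on a subspace" rests on the fact that rescaling a cube by the common contraction ratio returns a cube, so one can run a clean multi-scale induction with a scale-invariant statement. Here, rescaling by $\varphi_u$ distorts $\mathcal{E}$-cells anisotropically, and the rate at which coordinate $j$ "catches up" depends on $\chi_j-\chi_1$; consequently the behaviour of the entropy of repeated self-convolutions $\theta^{*k}$ across scales is governed not by a single exponent but by a vector of exponents, and — as the abstract emphasises — behaves qualitatively differently from the conformal case (roughly, entropy can be "parked" in the slow coordinates and released later). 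Making this rigorous requires a careful bookkeeping of which coordinates are "active" at which scales, a corresponding refinement of the Kaimanovich–Vershik / Berg entropy-of-convolution monotonicity machinery adapted to $\mathcal{E}$, and an argument that the only way to avoid entropy gain is genuine low-dimensional concentration in coordinate subspaces rather than some subtler non-conformal degeneracy. I expect essentially all of the work to be concentrated there, with steps one and three being adaptations of now-standard self-affine arguments (component measures, the Ledrappier–Young formula of Section \ref{subsec:Ledrappier-Young-formula}, and the projection hypothesis (\ref{eq:assump reg proj in sketch})).
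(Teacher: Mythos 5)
Your proposal is structurally different from the paper's argument, and the difference matters: you plan to prove an \emph{inverse theorem} (``if $\frac{1}{n}H(\theta*\mu,\mathcal{E}_n)$ fails to exceed $\kappa$, then for many scales $i$ the component $\mu_{y,i}$ concentrates near a coordinate subspace'') and then contradict $\dim\pi_J\mu=|J|$ via Ledrappier--Young. The paper does \emph{not} prove a dichotomy of this kind, and in fact never needs to analyse components of $\mu$ at all. Its key technical input, Proposition \ref{prop:full ent of slices}, is a positive statement about $\theta$ alone: since $\frac1n H(\theta,\mathcal{E}_n)>\epsilon$, a positive proportion of rescaled components $A_\chi^i\theta_{x,i}$ has nonnegligible variance in some coordinate; then, after a $k_j$-fold self-convolution, the Berry--Esseen theorem makes the slowest such coordinate direction $j$ look approximately Gaussian at the appropriate intermediate scale, and hence the conditional entropy of $\theta^{*k_j}$ along direction $j$ (given the other coordinates) is almost full at a positive proportion of scales $\mathrm{Q}_j$. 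The cascade of scales $\epsilon^{-1}\ll m_d\ll k_d\ll m_{d-1}\ll\cdots\ll m_1\ll k_1\ll n$ is exactly the bookkeeping that handles the ``entropy parked in slow coordinates'' phenomenon you identify; it does not appear as a refinement of the Kaimanovich--Vershik machinery but inside the CLT step.

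With Proposition \ref{prop:full ent of slices} in hand, the paper's proof of Theorem \ref{thm:ent inc result} is \emph{direct}, not by contradiction. The hypothesis $\dim\pi_J\mu=|J|$ enters through Lemma \ref{lem:lb on proj of comp}, which says that $\mu$ already saturates the conditional entropy in the remaining $d-1$ coordinates, $\frac{1}{m}H\big(\mu,\pi_{[d]\setminus\{j\}}^{-1}\mathcal{E}_{i+m}\mid\mathcal{E}_i\big)\ge\sum_{j'\ne j}\chi_{j'}-\eta$. Adding to this the almost-full conditional entropy of $\theta^{*k_j}$ along coordinate $j$ (from the definition of $\mathrm{Q}_j$), the chain rule gives $\frac{1}{m_j}H(\theta^{*k_j}*\mu,\mathcal{E}_{i+m_j}\mid\mathcal{E}_i)\ge\frac12\big(\sum_{j'}\chi_{j'}+\kappa\big)-O(\eta)$ for $i\in\mathrm{Q}_j$, while Lemma \ref{lem:lb on ent of comp} gives the floor $\ge\kappa-\eta$ for all other $i$. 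Since $\dim\mu<d$ forces $\sum_{j'}\chi_{j'}>\kappa$, averaging over scales yields $\frac1n H(\theta^{*k_j}*\mu,\mathcal{E}_n)>\kappa+\delta_0^2$. Only then does Kaimanovich--Vershik (Corollary \ref{cor:from Kaimanovich=002013Vershik}) enter, transferring the gain from $\theta^{*k_j}*\mu$ to $\theta*\mu$ at a cost of a factor $k_j$. This structure avoids the main weakness of your plan: a dichotomy for $\theta*\mu$ cannot pin degeneracy on $\mu$'s components rather than $\theta$'s, so the contradiction step as you have phrased it would not go through without substantial extra work. The paper sidesteps this by replacing $\theta$ with $\theta^{*k}$ at the outset (so that the CLT eliminates any component degeneracy of $\theta$), and by using the projection hypothesis constructively rather than as the target of a reductio.
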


For the rest of this subsection we discuss the proof of Theorem \ref{thm:ent inc result}.
The proof requires the following two lemmas, which concern non-conformal
conditional entropies of $\mu$. Recall that the linear parts of $\Phi$
are assumed to be contained in a $1$-dimensional subgroup. The derivation
of (\ref{eq:=00003Dkappa}) and the proofs of Lemmas \ref{lem:lb on ent of comp}
and \ref{lem:lb on proj of comp} are the only places in the argument
where this assumption is used.
\begin{lem}
\label{lem:lb on ent of comp}Suppose that $\dim\pi_{[d-1]}\mu=d-1$.
Then for all $\epsilon>0$, $m\ge M(\epsilon)\ge1$ and $n\ge1$,
\[
\frac{1}{m}H\left(\mu,\mathcal{E}_{n+m}\mid\mathcal{E}_{n}\right)\ge\kappa-\epsilon.
\]
\end{lem}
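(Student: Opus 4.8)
The plan is to decompose $\mu$ along a stopping time in the symbolic space $\Lambda^{\mathbb{N}}$ and to exploit the self-affinity of $\mu$, so that each piece of the decomposition becomes a rescaled copy of $\mu$ seen at scales $[0,m]$ rather than at scales $[n,n+m]$. To set up coordinates, I would use Condition (\ref{enu:cond reg 1-dim sub}) together with $|r_{i,j}|\in(0,1)$ to write $|r_{i,j}|=c_{j}^{t_{i}}$ with all $c_{j}\in(0,1)$ and all $t_{i}>0$; then $\chi_{j}=-\bar{t}\log c_{j}$ where $\bar{t}:=\sum_{i}p_{i}t_{i}>0$, and for a word $u=i_{1}\cdots i_{k}\in\Lambda^{*}$ the linear part of $\varphi_{u}$ scales the $j$-th coordinate by a factor of absolute value exactly $2^{-\chi_{j}q_{u}}$, where $t_{u}:=\sum_{\ell}t_{i_{\ell}}$ and $q_{u}:=t_{u}/(\bar{t}\log 2)$. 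The only properties of $q_{u}$ that will be used are that it is nonnegative, that it increases by at most $C:=\max_{i}t_{i}/(\bar{t}\log 2)$ when a symbol is appended to $u$, and that $q_{u}\to\infty$ as $|u|\to\infty$. Finally recall that $\mathcal{E}_{n+m}$ refines $\mathcal{E}_{n}$, so $H(\nu,\mathcal{E}_{n+m}\mid\mathcal{E}_{n})=H(\nu,\mathcal{E}_{n+m})-H(\nu,\mathcal{E}_{n})$ for every compactly supported $\nu$.

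Fixing $n\ge1$, I would let $\mathcal{W}_{n}\subset\Lambda^{*}$ be the finite set of words $u$ with $q_{u}\ge n$ but $q_{u^{-}}<n$, where $u^{-}$ denotes $u$ with its last symbol deleted. Since $q$ increases along prefixes and is unbounded, the cylinders $\{[u]:u\in\mathcal{W}_{n}\}$ partition $\Lambda^{\mathbb{N}}$ up to a null set, hence $\mu=\sum_{u\in\mathcal{W}_{n}}p_{u}\,\varphi_{u}\mu$; and the one-step bound forces $n\le q_{u}<n+C$ for every $u\in\mathcal{W}_{n}$. This bounded overshoot is the crux of the argument. Using concavity of $\nu\mapsto H(\nu,\mathcal{E}_{n+m}\mid\mathcal{E}_{n})$ in $\nu$, together with the chain rule above, one obtains
\[
H(\mu,\mathcal{E}_{n+m}\mid\mathcal{E}_{n})\;\ge\;\sum_{u\in\mathcal{W}_{n}}p_{u}\bigl(H(\varphi_{u}\mu,\mathcal{E}_{n+m})-H(\varphi_{u}\mu,\mathcal{E}_{n})\bigr),
\]
and it remains to estimate the two terms for a fixed $u\in\mathcal{W}_{n}$.

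For the subtracted term: since $\varphi_{u}(K_{\Phi})$ has $j$-th projection of diameter at most $2^{-\chi_{j}q_{u}}\mathrm{diam}(K_{\Phi})\le 2^{-\lfloor\chi_{j}n\rfloor}\mathrm{diam}(K_{\Phi})$ (using $q_{u}\ge n$), the support of $\varphi_{u}\mu$ meets only boundedly many atoms of $\mathcal{E}_{n}$, so $H(\varphi_{u}\mu,\mathcal{E}_{n})\le C_{1}$ for a constant $C_{1}=C_{1}(\Phi)$. For the other term, $H(\varphi_{u}\mu,\mathcal{E}_{n+m})=H(\mu,\varphi_{u}^{-1}\mathcal{E}_{n+m})$, and $\varphi_{u}^{-1}\mathcal{E}_{n+m}$ is a product of interval partitions whose $j$-th mesh, $2^{-\lfloor\chi_{j}(n+m)\rfloor}2^{\chi_{j}q_{u}}$, lies within a fixed multiplicative constant of $2^{-\lfloor\chi_{j}m\rfloor}$ (this is precisely where $n\le q_{u}<n+C$ enters). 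Consequently every atom of $\mathcal{E}_{m}$ meets at most boundedly many atoms of $\varphi_{u}^{-1}\mathcal{E}_{n+m}$, and the standard refinement estimate yields $H(\mu,\mathcal{E}_{m})\le H(\mu,\varphi_{u}^{-1}\mathcal{E}_{n+m})+C_{2}$ with $C_{2}=C_{2}(\Phi)$. Substituting both estimates into the display gives $H(\mu,\mathcal{E}_{n+m}\mid\mathcal{E}_{n})\ge H(\mu,\mathcal{E}_{m})-C_{1}-C_{2}$, with $C_{1},C_{2}$ independent of $n$ and $m$.

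To conclude, I would divide by $m$ and let $m\to\infty$: by (\ref{eq:=00003Dkappa}), which is the one point where the hypothesis $\dim\pi_{[d-1]}\mu=d-1$ is invoked, the right-hand side tends to $\kappa$, so $M(\epsilon)$ may be chosen to make it at least $\kappa-\epsilon$ for all $m\ge M(\epsilon)$ and all $n\ge1$. The only genuine obstacle is the uniformity over $n$, and it is the bounded-overshoot stopping time that disposes of it; the remaining ingredients — refinement bounds and the insensitivity of dyadic entropy to translation up to $O(1)$ — are routine. That bookkeeping does, however, rely essentially on Condition (\ref{enu:cond reg 1-dim sub}): it is this assumption that makes $\varphi_{u}$ rescale all coordinates by exponents proportional to the single scalar $q_{u}$, so that $\varphi_{u}^{-1}\mathcal{E}_{\ell}$ is comparable to the one partition $\mathcal{E}_{\ell-q_{u}}$ rather than to some anisotropic hybrid; without it the decomposition would not reduce the scales $[n,n+m]$ to the scales $[0,m]$.
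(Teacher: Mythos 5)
Your proposal is correct and takes essentially the same route as the paper: the paper also decomposes $\mu=\sum_{u\in\mathcal{U}_{n}}p_{u}\varphi_{u}\mu$ over the stopping set $\mathcal{U}_{n}$ (which is your $\mathcal{W}_{n}$), uses the bounded overshoot $n\le t_{u}\le n+O(1)$ guaranteed by Condition (\ref{enu:cond reg 1-dim sub}), applies concavity of conditional entropy and the commensurability estimates (\ref{eq:comens part after trans})--(\ref{eq:comens part after scaling}) to reduce $H(\mu,\mathcal{E}_{n+m}\mid\mathcal{E}_{n})\ge H(\mu,\mathcal{E}_{m})-O(1)$, and then concludes via Lemma \ref{lem:lim of ent wrt non-conf part}. (Two harmless slips: your normalization $q_{u}=t_{u}/(\bar{t}\log 2)$ should be $q_{u}=t_{u}/\bar{t}$ given that the paper's logarithms are base $2$, and the refinement bound $H(\mu,\mathcal{E}_{m})\le H(\mu,\varphi_{u}^{-1}\mathcal{E}_{n+m})+C_{2}$ uses that each atom of $\varphi_{u}^{-1}\mathcal{E}_{n+m}$ meets $O(1)$ atoms of $\mathcal{E}_{m}$, which is the opposite direction from the one you cite, though both hold here since the partitions are $O(1)$-commensurable.)
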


\begin{lem}
\label{lem:lb on proj of comp}Let $J\subset[d]$ be with $\dim\pi_{J}\mu=|J|$.
Then for all $\epsilon>0$, $m\ge M(\epsilon)\ge1$ and $n\ge1$,
\[
\frac{1}{m}H\left(\mu,\pi_{J}^{-1}\mathcal{E}_{n+m}\mid\mathcal{E}_{n}\right)\ge\sum_{j\in J}\chi_{j}-\epsilon.
\]
\end{lem}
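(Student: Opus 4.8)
The plan is to use the self-affinity of $\mu$ together with Condition (\ref{enu:cond reg 1-dim sub}) to renormalise the pair of scales $(n,n+m)$ down to $(0,m)$, after which the assertion reduces to the full-dimensionality of $\pi_J\mu$. We may assume $J\ne\emptyset$. By Condition (\ref{enu:cond reg 1-dim sub}) write $|r_{i,j}|=c_j^{t_i}$; after relabelling we may take $c_j\in(0,1)$ and $t_i>0$ for all $i,j$, and set $\bar t:=\sum_i p_i t_i$, so $\chi_j=-\bar t\log c_j>0$. For a word $u=i_1\cdots i_k\in\Lambda^{*}$ put $T_u:=t_{i_1}+\cdots+t_{i_k}$, and for $n\ge1$ let $\Xi_n\subset\Lambda^{*}$ be the finite complete section of those $u$ with $T_u\ge n\bar t>T_{u'}$, $u'$ the length-$(|u|-1)$ prefix of $u$; since $t_i>0$ this is well defined and $\mu=\sum_{u\in\Xi_n}p_u\varphi_u\mu$. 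The decisive consequence of the one-dimensional subgroup assumption is that the single bound $T_u\in[n\bar t,\,n\bar t+\max_i t_i)$ governs all coordinates simultaneously: there is a constant $C=C(\Phi)$ with $\bigl|{-\log|r_{u,j}|}-\chi_j n\bigr|\le C$ for every $u\in\Xi_n$ and every $1\le j\le d$.

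Granting this, I would run the following chain of inequalities. For $u\in\Xi_n$ the diagonal contraction $\varphi_u^{-1}$ carries $\mathcal{E}_n$ to a partition which, coordinate by coordinate, is a translate of a dyadic partition of level in $[-C,0]$, hence is $O_\Phi(1)$-equivalent to $\mathcal{E}_0$; and, since $\pi_J\circ\varphi_u=\varphi_u^{J}\circ\pi_J$ with $\varphi_u^{J}$ the restriction of $\varphi_u$ to the $J$-coordinates, $\varphi_u^{-1}$ carries $\pi_J^{-1}\mathcal{E}_{n+m}$ to a partition $O_\Phi(1)$-equivalent to $\pi_J^{-1}\mathcal{E}_m$. (Here two partitions are $c$-equivalent if each has conditional entropy $\le c$ given the other, for every measure; such equivalences change conditional entropies by $O(c)$.) Using $\mu=\sum_{u\in\Xi_n}p_u\varphi_u\mu$, the concavity of conditional entropy in the measure (obtained by adjoining a coordinate recording the chosen branch), and a change of variables in each term,
\[
H(\mu,\pi_J^{-1}\mathcal{E}_{n+m}\mid\mathcal{E}_n)\ \ge\ \sum_{u\in\Xi_n}p_u\,H\bigl(\mu,\varphi_u^{-1}\pi_J^{-1}\mathcal{E}_{n+m}\mid\varphi_u^{-1}\mathcal{E}_n\bigr)\ \ge\ H\bigl(\mu,\pi_J^{-1}\mathcal{E}_m\mid\mathcal{E}_0\bigr)-O_\Phi(1).
\]

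It remains to remove the conditioning on $\mathcal{E}_0$ and to invoke the dimension of $\pi_J\mu$. Since $H(\mathcal{A}\mid\mathcal{B})\ge H(\mathcal{A})-H(\mathcal{B})$ and the unit-scale partition $\mathcal{E}_0$ has only $O_\Phi(1)$ atoms meeting the compact set $\mathrm{supp}(\mu)$, we get $H(\mu,\pi_J^{-1}\mathcal{E}_m\mid\mathcal{E}_0)\ge H(\mu,\pi_J^{-1}\mathcal{E}_m)-O_\Phi(1)$, and $H(\mu,\pi_J^{-1}\mathcal{E}_m)$ is exactly the entropy of $\pi_J\mu$ for the corresponding non-conformal partition of $\mathbb{R}^{|J|}$. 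Since $\dim\pi_J\mu=|J|$ we also have $\dim\pi_{J'}\mu=|J'|$ for every $J'\subseteq J$ (projecting off a coordinate lowers dimension by at most $1$), so $\pi_J\mu$ is a full-dimensional self-affine measure on $\mathbb{R}^{|J|}$ satisfying the hypotheses of Theorem \ref{thm:main result for measures}; hence the analogue of (\ref{eq:=00003Dkappa}) for $\pi_J\mu$ — equivalently, Lemma \ref{lem:lb on ent of comp} applied to $\pi_J\mu$ at conditioning scale $0$ — yields $\tfrac1m H(\mu,\pi_J^{-1}\mathcal{E}_m)\ge\sum_{j\in J}\chi_j-\epsilon/2$ for $m\ge M_0(\epsilon)$. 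Combining the displays and absorbing the $O_\Phi(1)$ error into $\epsilon m/2$ for $m\ge M(\epsilon)$ gives the lemma, uniformly in $n\ge1$.

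I expect the only real difficulty to be the renormalisation step: one must check, uniformly over $u\in\Xi_n$ and over $n$, that $\varphi_u^{-1}$ really transports the scale-$n$ and scale-$(n+m)$ non-conformal partitions to bounded perturbations of the scale-$0$ and scale-$m$ ones. This is precisely where Condition (\ref{enu:cond reg 1-dim sub}) is essential: without it the single stopping rule ``$T_u\approx n\bar t$'' would split into the $d$ mutually incompatible demands $-\log|r_{u,j}|\approx\chi_j n$, and no complete section could renormalise all coordinates at once. The remaining ingredients — the partition-equivalence bounds, the concavity and chain-rule inequalities, and counting atoms meeting $\mathrm{supp}(\mu)$ — are routine.
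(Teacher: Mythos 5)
Your renormalisation step is exactly the paper's: decompose $\mu=\sum_{u}p_u\varphi_u\mu$ over a stopping-time section $\Xi_n$ (the paper's $\mathcal{U}_n$), use Condition~(\ref{enu:cond reg 1-dim sub}) together with (\ref{eq:comens part after trans}) and (\ref{eq:comens part after scaling}) to conclude that $\varphi_u^{-1}$ carries $\mathcal{E}_n$ and $\pi_J^{-1}\mathcal{E}_{n+m}$ to partitions $O(1)$-commensurable with $\mathcal{E}_0$ and $\pi_J^{-1}\mathcal{E}_m$, and then use concavity of conditional entropy to reduce to $\tfrac1m H(\mu,\pi_J^{-1}\mathcal{E}_m)$ at the cost of $O(1/m)$. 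The one place you diverge is in establishing $\liminf_m\tfrac1mH(\mu,\pi_J^{-1}\mathcal{E}_m)\ge\sum_{j\in J}\chi_j$. You route this through Lemma~\ref{lem:lb on ent of comp} (equivalently Lemma~\ref{lem:lim of ent wrt non-conf part}) applied to the embedded self-affine measure $\pi_J\mu$, which requires first checking that the codimension-one sub-projection of $\pi_J\mu$ is full-dimensional; your argument for that (projecting off a coordinate drops dimension by at most one, via Theorem~\ref{thm:LY formula} and Remark~\ref{rem:after LY formula}) is correct. The paper instead proves the $\liminf$ bound directly by writing $H(\pi_J\mu,\mathcal{E}_k)=H(\pi_J\mu,\mathcal{D}_{\chi_d k})-H(\pi_J\mu,\mathcal{D}_{\chi_d k}\mid\mathcal{E}_k)$, using $\dim\pi_J\mu=|J|$ for the first term and a straightforward atom-count of $\sum_{j\in J}(\chi_d-\chi_j)+O(1/k)$ for the second; this avoids the detour through Lemma~\ref{lem:lim of ent wrt non-conf part} and the need to verify its sub-projection hypothesis. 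Both routes are valid, but the paper's is more economical because it needs only a lower bound (not a limit) and uses only $\dim\pi_J\mu=|J|$, not hereditary full-dimensionality.
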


In the last lemma, the notation $\pi_{J}^{-1}\mathcal{E}_{n+m}$ means
$\left\{ \pi_{J}^{-1}(E)\::\:E\in\mathcal{E}_{n+m}\right\} $. The
validity of this lemma is one of main reasons why it is necessary
to work with the non-conformal partitions instead of standard dyadic
partitions. Indeed, it is not clear how to prove this statement with
$\mathcal{D}_{n+m}^{d}$ and $\mathcal{D}_{n}^{d}$ in place of $\mathcal{E}_{n+m}$
and $\mathcal{E}_{n}$, or even if the statement remains true after
this modification.

The main ingredient in the proof of Theorem \ref{thm:ent inc result}
is the following proposition, which is probably the main novelty of
this paper. For $\theta\in\mathcal{M}_{\mathrm{c}}(\mathbb{R}^{d})$
and $k\ge1$ we write $\theta^{*k}$ for the $k$-fold convolution
of $\theta$ with itself. Given $R_{1},R_{2}\ge1$ we write $R_{1}\ll R_{2}$
in order to indicate that $R_{2}$ is large with respect to $R_{1}$.
We further elaborate on this notation in Section \ref{subsec:Basic-notations}.
\begin{prop}
\label{prop:full ent of slices}For every $0<\epsilon<1$ there exists
$\delta>0$ so that the following holds. Let $m_{1},...,m_{d},k_{1},...,k_{d},n\in\mathbb{Z}_{>0}$
be such that,
\begin{itemize}
\item $\epsilon^{-1}\ll m_{d}$ and $k_{1}\ll n$;
\item $m_{j}\ll k_{j}$ for $1\le j\le d$;
\item $k_{j+1}\ll m_{j}$ for $1\le j<d$.
\end{itemize}
Let $\theta\in\mathcal{M}_{\mathrm{c}}(\mathbb{R}^{d})$ be with $\mathrm{diam}(\mathrm{supp}(\theta))\le\epsilon^{-1}$
and $\frac{1}{n}H(\theta,\mathcal{E}_{n})>\epsilon$. For $1\le j\le d$
write $\mathrm{Q}_{j}$ for the set of all integers $1\le q\le n$
so that,
\[
\frac{1}{m_{j}}H\left(\theta^{*k_{j}},\mathcal{E}_{q+m_{j}}\mid\mathcal{E}_{q}\vee\pi_{[d]\setminus\{j\}}^{-1}\mathcal{E}_{q+m_{j}}\right)>\chi_{j}-\epsilon.
\]
Then there exists $1\le j\le d$ such that $\frac{1}{n}\left|\mathrm{Q}_{j}\right|>\delta$.
\end{prop}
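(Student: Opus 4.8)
The plan is to argue by contraposition: suppose $\frac{1}{n}|\mathrm{Q}_j|\le\delta$ for every $1\le j\le d$, where $\delta=\delta(\epsilon)>0$ is to be chosen, and deduce that $\frac{1}{n}H(\theta,\mathcal{E}_n)$ is as small as we please (with $\delta$), contradicting $\frac{1}{n}H(\theta,\mathcal{E}_n)>\epsilon$. Write $\chi:=\chi_1+\dots+\chi_d$, so that $\frac{1}{m}H\bigl(\nu,\mathcal{E}_{q+m}\mid\mathcal{E}_q\bigr)\le\chi$ for all $\nu\in\mathcal{M}_{\mathrm{c}}(\mathbb{R}^d)$, and recall that, for $m\mid n$, $H(\theta,\mathcal{E}_n)=H(\theta,\mathcal{E}_0)+\sum_i H\bigl(\theta,\mathcal{E}_{(i+1)m}\mid\mathcal{E}_{im}\bigr)$ with $H(\theta,\mathcal{E}_0)$ bounded in terms of $d$ and $\epsilon$ only. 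Hence it suffices to show that, for some admissible window size $m$, the increment $\frac{1}{m}H(\theta,\mathcal{E}_{q+m}\mid\mathcal{E}_q)$ is small for all but a tiny proportion of the windows $[q,q+m]$ tiling $[0,n]$.

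The core is an inverse‑theorem dichotomy applied to the self‑convolutions of $\theta$. Since the dyadic product partitions $\mathcal{E}_q$ are invariant under translations by a lattice, up to an additive constant depending only on $d$ convolution does not decrease $\mathcal{E}_q$‑entropy; thus $k\mapsto\frac{1}{m}H(\theta^{*k},\mathcal{E}_{q+m}\mid\mathcal{E}_q)$ is essentially nondecreasing and bounded by $\chi$. Because $k_j$ is enormous relative to $m_j$ and to everything lower in the hierarchy, for all but a $o(1)$‑proportion of $1\le k<k_j$ the passage $\theta^{*k}\mapsto\theta^{*(k+1)}=\theta*\theta^{*k}$ raises $\frac{1}{m_j}H(\theta^{*k},\mathcal{E}_{q+m_j}\mid\mathcal{E}_q)$ by at most $o(m_j)$ for a $(1-o(1))$‑proportion of scales $q$. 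Fixing such a $k$ and applying the inverse theorem for convolution entropy in $\mathbb{R}^d$ — in the form adapted to the non‑conformal partitions $\mathcal{E}_q$ — to $\theta*\theta^{*k}$, we obtain at a $(1-o(1))$‑proportion of scales $q$ a subspace $V=V_q$ such that the level‑$q$ component of $\theta$ is concentrated near a single coset of $V$, while the level‑$q$ component of $\theta^{*k}$ is close to uniform along the $V$‑directions (in the sense appropriate to $\mathcal{E}_q$).

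Split into two cases according to the proportion of scales $q$ at which $\dim V_q\ge1$. If this proportion lies below a small threshold $\delta'$, then for all but a $\delta'$‑fraction of windows the component of $\theta$ lies essentially in a single cell of $\mathcal{E}_{q+m}$, so $\frac{1}{m}H(\theta,\mathcal{E}_{q+m}\mid\mathcal{E}_q)=o(1)$ off a $\delta'$‑fraction of windows; telescoping bounds $\frac{1}{n}H(\theta,\mathcal{E}_n)$ by $\delta'\chi+o(1)$, which is $<\epsilon$ once $\delta'$ is small — the contradiction sought. If instead $\dim V_q\ge1$ for at least a $\delta'$‑proportion of scales, then here the non‑conformal partitions are indispensable: because the coordinates refine at the distinct rates $\chi_1<\dots<\chi_d$, a subspace in oblique position carries genuine multi‑scale $\mathcal{E}$‑entropy and cannot support the near‑uniform behaviour just extracted, so $V_q$ is forced to be (essentially) a coordinate subspace. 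Hence $V_q$ contains some $e_j$; pigeonholing over $j\in[d]$ and over the finitely many admissible window sizes (and using that near‑uniformity along $e_j$ across a window passes to sub‑windows) produces a single coordinate $j$ for which the level‑$q$ component of $\theta^{*k}$ is close to uniform along $e_j$ across $[q,q+m_j]$ for a positive proportion of scales $q$. This near‑uniformity along $e_j$ is of product type, so it persists after conditioning on $\pi_{[d]\setminus\{j\}}^{-1}\mathcal{E}_{q+m_j}$; and since $k\mapsto\frac{1}{m_j}H\bigl(\theta^{*k},\mathcal{E}_{q+m_j}\mid\mathcal{E}_q\vee\pi_{[d]\setminus\{j\}}^{-1}\mathcal{E}_{q+m_j}\bigr)$ is again essentially nondecreasing, the same lower bound holds with $\theta^{*k_j}$ in place of $\theta^{*k}$. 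Therefore $q\in\mathrm{Q}_j$ for a positive proportion of $q$, i.e. $\frac{1}{n}|\mathrm{Q}_j|>\delta$, contradicting our assumption.

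I expect the main obstacle to be exactly the step showing that the flat subspaces $V_q$ must be coordinate subspaces — equivalently, understanding how the non‑conformal multi‑scale entropy of the repeated self‑convolutions $\theta^{*k}$ evolves across scales, which is the substitute for the conformal theory (where the inverse theorem already singles out one direction) and, as the abstract indicates, the main novelty. Supporting technical points that will need care are: establishing the non‑conformal form of the inverse theorem, or reducing it to the standard conformal one via the relation between the $\mathcal{E}_q$ and the usual dyadic partitions; the delicate but routine passage between components of the convolution $\theta*\theta^{*k}$ and convolutions of components of $\theta$ and of $\theta^{*k}$; and shepherding the error terms through the tower $\epsilon^{-1}\ll m_d\ll k_d\ll\cdots\ll m_1\ll k_1\ll n$, whose successive gaps serve, in turn, to make the inverse‑theorem errors negligible, to make the average convolution increment small, and to keep the conclusion drawn for one coordinate valid at the scales relevant to the next.
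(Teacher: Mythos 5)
Your proposal takes a genuinely different route from the paper, and that route contains a gap that you yourself flag but do not close, so the argument as written does not go through.

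The paper's proof does \emph{not} rely on any inverse theorem for convolution entropy. It is direct: Lemma~\ref{lem:var of proj of comp > delta} (which rests only on Lemma~\ref{lem:ent=00003Davg of loc ent} and Lemma~\ref{lem:small Var --> small ent}) extracts from $\frac{1}{n}H(\theta,\mathcal{E}_n)>\epsilon$ a $\delta$-proportion of scales $b$ at which a non-negligible fraction of rescaled components $A_\chi^b\theta_{x,b}$ has variance bounded below in \emph{some} coordinate direction. These scales are partitioned by the least $j$ at which this variance exceeds a threshold $\eta_j$, where the crucial choice $\eta_j:=k_{j+1}^{-1}$ for $j<d$ ties the threshold ladder to the given tower $m_j\ll k_j\ll m_{j-1}\ll k_{j-1}\ll\cdots$. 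By pigeonhole one $j$ is selected. Then for a $\theta^{\times k_j}$-typical $(x_1,\dots,x_{k_j})$ the law of large numbers yields $k\approx\delta k_j/(2d)$ indices $s_i$ with $A_\chi^b\theta_{x_{s_i},b}$ having large $\pi_j$-variance and small $\pi_{j'}$-variance for $j'<j$; the choice of $\eta_{j'}$ makes $\sum_i\mathrm{Var}(\pi_{j'}A_\chi^b\theta_{x_{s_i},b})\le k\eta_{j'}\le 1$ for $j'<j$. Lemma~\ref{lem:ent of slices of O(1) measures}, whose engine is the Berry--Esseen theorem plus Lemma~\ref{lem:close to normal --> full ent}, then shows the convolution of these $k$ components has near-full conditional entropy in direction $e_j$ at the appropriate shifted scale. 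Averaging over $(x_1,\ldots,x_{k_j})$ and using concavity transfers this to $\theta^{*k_j}$.

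Your proposal instead argues by contraposition and appeals, as a black box, to ``the inverse theorem for convolution entropy in $\mathbb{R}^d$ --- in the form adapted to the non-conformal partitions $\mathcal{E}_q$.'' No such theorem is established in the paper, nor cited, and proving a non-conformal analogue of Hochman's $\mathbb{R}^d$ inverse theorem is itself a substantial undertaking; the paper deliberately sidesteps this by going through Berry--Esseen. Your next step --- that the flat subspaces $V_q$ produced by such an inverse theorem must be (essentially) coordinate subspaces because the $\mathcal{E}_q$ refine at distinct rates $\chi_1<\cdots<\chi_d$ --- is exactly the point you identify as the ``main obstacle,'' and you give only a heuristic for it. That step is not routine: the distinct refinement rates are what make the non-conformal multi-scale entropy of self-convolutions behave differently from the conformal case, but translating this into ``concentration subspace is axis-parallel'' is a real piece of work and is nowhere justified. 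Finally, the way you intend to match the coordinate $j$ you obtain with the $j$-specific $k_j$ and $m_j$ appearing in the definition of $\mathrm{Q}_j$ is left at the level of ``pigeonholing over window sizes''; the paper's threshold scheme $\eta_j=k_{j+1}^{-1}$ is precisely what makes this matching work, and your outline does not replace it. So while the broad dichotomy you sketch (either $\theta$ is locally atomic and its entropy is tiny, or a convolution has a spread-out direction) is the right intuition, the concrete mechanism in the paper --- component variances, the law of large numbers, and Berry--Esseen --- is absent from your write-up, and the steps you substitute for it (the non-conformal $\mathbb{R}^d$ inverse theorem and the axis-alignment of $V_q$) are unproved.
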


Given $\sigma\in\mathcal{M}_{\mathrm{c}}(\mathbb{R}^{d})$, $q\in\mathbb{Z}_{>0}$
and $E\in\mathcal{E}_{q}$ with $\sigma(E)>0$, we refer to $\sigma_{E}:=\frac{1}{\sigma(E)}\sigma|_{E}$
as a (non-conformal) $q$-component of $\sigma$. Roughly speaking,
the conclusion of the proposition is that there exists $1\le j\le d$
so that, for a nonnegligible proportion of scales $1\le q\le n$,
the entropy of most $q$-components of $\theta^{*k_{j}}$ along narrow
rectangular tubes in direction $e_{j}\mathbb{R}$ is almost full.
It is easy to see that the proposition becomes false if the partitions
$\mathcal{E}_{n}$ are replaced with the conformal partitions $\mathcal{D}_{n}^{d}$.
The proof of the proposition relies on the Berry-Esseen theorem, and
is inspired by the proof of Hochman's inverse theorem for the entropy
of convolutions on $\mathbb{R}$ \cite[Theorem 2.7]{Ho1}.

Given the last three statements, it is not difficult to complete the
proof of Theorem \ref{thm:ent inc result}. Let $\epsilon$, $n$
and $\theta$ be as in the statement of the theorem. By using Proposition
\ref{prop:full ent of slices}, Lemma \ref{lem:lb on proj of comp},
and (\ref{eq:assump reg proj in sketch}), it is possible to show
that there exist $m,k\in\mathbb{Z}_{>0}$ with $\epsilon^{-1}\ll m\ll k\ll n$
so that $h_{i}:=\frac{1}{m}H\left(\theta^{*k}*\mu,\mathcal{E}_{i+m}\mid\mathcal{E}_{i}\right)$
is close to its maximal possible value $\sum_{j=1}^{d}\chi_{j}$ for
a nonnegligible proportion of scales $1\le i\le n$. Note that from
$\dim\mu<d$ it follows that $\kappa$ is strictly smaller than $\sum_{j=1}^{d}\chi_{j}$.
At the same time, Lemma \ref{lem:lb on ent of comp} implies that
$h_{i}$ is (almost) bounded from below by $\kappa$ for all scales
$1\le i\le n$. Together, these facts suggest that $\frac{1}{n}H(\theta^{*k}*\mu,\mathcal{E}_{n})>\kappa+\delta_{0}$
for some $0<\delta_{0}<1$ with $\epsilon^{-1}\ll\delta_{0}^{-1}\ll m$.
By a lemma of Kaimanovich and Vershik \cite{kaimanovich_and_vershik},
it now follows that $\frac{1}{n}H(\theta*\mu,\mathcal{E}_{n})>\kappa+\delta_{0}/(2k)$,
which completes the proof of the theorem with $\delta:=\delta_{0}/(2k)$.

\subsubsection*{\textbf{\emph{Structure of the paper}}}

In Section \ref{sec:Preliminaries}, we develop necessary notations
and background. Section \ref{sec:Entropy-of-repeated-conv} is devoted
to the proof of Proposition \ref{prop:full ent of slices}. In Section
\ref{sec:Asymptotic-entropies-of mu}, we prove Lemmata \ref{lem:lb on ent of comp}
and \ref{lem:lb on proj of comp}. In Section \ref{sec:Proof-of-ent-unc-res},
we establish our entropy increase result. In Section \ref{sec:Proof-of main thm for measures},
we prove Theorem \ref{thm:main result for measures}. Finally, in
Section \ref{sec:Proof-of-main thm for sets}, we deduce Theorem \ref{thm:main result for sets}.

\subsubsection*{\textbf{\emph{Acknowledgment}}}

I would like to thank Thomas Jordan for helpful discussions.

\section{\label{sec:Preliminaries}Preliminaries}

\subsection{\label{subsec:Basic-notations}Basic notations}

In what follows, the base of the logarithm is always $2$. For an
integer $k\ge0$ we write $[k]$ in place of $\{1,...,k\}$, so that
$[0]=\emptyset$. When $k\ge1$, we denote by $\lambda_{k}$ the normalized
counting measure on $[k]$. That is, $\lambda_{k}\{i\}=k^{-1}$ for
each $1\le i\le k$.

Given a metric space $X$, denote by $\mathcal{M}(X)$ the collection
of all Borel probability measures on $X$. We write $\mathcal{M}_{\mathrm{c}}(X)$
for the set of compactly supported members of $\mathcal{M}(X)$.

Given $R_{1},R_{2}\in\mathbb{R}$ with $R_{1},R_{2}\ge1$, we write
$R_{1}\ll R_{2}$ in order to indicate that $R_{2}$ is large with
respect to $R_{1}$. Formally, this means that $R_{2}\ge f(R_{1})$,
where $f$ is an unspecified function from $[1,\infty)$ into itself.
The values attained by $f$ are assumed to be sufficiently large in
a manner depending on the specific context.

Similarly, given $0<\epsilon_{1},\epsilon_{2}<1$ we write $R_{1}\ll\epsilon_{1}^{-1}$,
$\epsilon_{2}^{-1}\ll R_{2}$ and $\epsilon_{1}^{-1}\ll\epsilon_{2}^{-1}$
in order to respectively indicate that $\epsilon_{1}$ is small with
respect to $R_{1}$, $R_{2}$ is large with respect to $\epsilon_{2}$,
and $\epsilon_{2}$ is small with respect to $\epsilon_{1}$.

The relation $\ll$ is clearly transitive. That is, if $R_{1}\ll R_{2}$
and for $R_{3}\ge1$ we have $R_{2}\ll R_{3}$, then also $R_{1}\ll R_{3}$.
For instance, the sentence `Let $m\ge1$, $k\ge K(m)\ge1$ and $n\ge N(m,k)\ge1$
be given' is equivalent to `Let $m,k,n\ge1$ be with $m\ll k\ll n$'.

\subsection{\label{subsec:The-setup}The setup}

Fix a finite nonempty index set $\Lambda$, and let $d\ge1$ be an
integer. For each $i\in\Lambda$ and $1\le j\le d$ let $0\ne r_{i,j}\in(-1,1)$
and $a_{i,j}\in\mathbb{R}$, and let $\varphi_{i,j}:\mathbb{R}\rightarrow\mathbb{R}$
be with $\varphi_{i,j}(t)=r_{i,j}t+a_{i,j}$ for $t\in\mathbb{R}$.

Given $\emptyset\ne J\subset[d]$ we denote by $\Phi_{J}$ the IFS
$\{\varphi_{i,J}\}_{i\in\Lambda}$ on $\mathbb{R}^{J}$, where
\[
\varphi_{i,J}(x):=(\varphi_{i,j}(x_{j}))_{j\in J}\text{ for }i\in\Lambda\text{ and }(x_{j})_{j\in J}=x\in\mathbb{R}^{J}.
\]
For $1\le j\le d$ we write $\Phi_{j}$ in place of $\Phi_{\{j\}}$.
We also write $\Phi$ in place of $\Phi_{[d]}$, and $\varphi_{i}$
in place of $\varphi_{i,[d]}$ for $i\in\Lambda$.

Denote by $K_{\Phi}$ the self-affine set corresponding to $\Phi$,
and by $\Pi:\Lambda^{\mathbb{N}}\rightarrow K_{\Phi}$ the coding
map corresponding to $\Phi$. That is,
\[
\Pi(\omega):=\underset{n\rightarrow\infty}{\lim}\:\varphi_{\omega_{0}}\circ...\circ\varphi_{\omega_{n}}(0)\text{ for }(\omega_{k})_{k\ge0}=\omega\in\Lambda^{\mathbb{N}}.
\]

From now on, until the end of Section \ref{sec:Proof-of main thm for measures},
fix a probability vector $p=(p_{i})_{i\in\Lambda}$. Let $\mu\in\mathcal{M}_{\mathrm{c}}(\mathbb{R}^{d})$
be the self-affine measure corresponding to $\Phi$ and $p$. For
$1\le j\le d$ set $\chi_{j}:=-\sum_{i\in\Lambda}p_{i}\log|r_{i,j}|$,
and write $\chi:=(\chi_{1},...,\chi_{d})$.

Until the end of Section \ref{sec:Proof-of main thm for measures},
we shall always assume that Conditions (\ref{enu:cond dist LY-exp})
and (\ref{enu:cond reg 1-dim sub}) of Theorem \ref{thm:main result for measures}
are satisfied. By Condition (\ref{enu:cond dist LY-exp}), without
loss of generality, we may assume that $\chi_{j}<\chi_{j+1}$ for
$1\le j<d$. Moreover, by Condition (\ref{enu:cond reg 1-dim sub}),
\begin{equation}
\left(-\log|r_{i,1}|,...,-\log|r_{i,d}|\right)\in\left\{ t\chi\::\:t\in\mathbb{R}_{>0}\right\} \text{ for }i\in\Lambda.\label{eq:diag mat in 1-dim subgroup}
\end{equation}

\subsection{Algebraic notations}

Let $\Lambda^{*}$ be the set of finite words over $\Lambda$. Given
a group $G$, indexed elements $\{g_{i}\}_{i\in\Lambda}\subset G$,
and a word $i_{1}...i_{n}=u\in\Lambda^{*}$, we often write $g_{u}$
in place of $g_{i_{1}}\cdot...\cdot g_{i_{n}}$. For the empty word
$\emptyset$ we write $g_{\emptyset}$ in place of $1_{G}$, where
$1_{G}$ is the identity of $G$.

Let $\{e_{1},...,e_{d}\}$ be the standard basis of $\mathbb{R}^{d}$.
Given $J\subset[d]$ denote by $\pi_{J}$ the orthogonal projection
onto $\mathrm{span}\{e_{j}\::\:j\in J\}$. Thus,
\[
\pi_{J}(x)=\sum_{j\in J}\left\langle e_{j},x\right\rangle e_{j}\text{ for }x\in\mathbb{R}^{d},
\]
and $\pi_{[0]}$ is identically $0$. We write $\pi_{j}$ in place
of $\pi_{\{j\}}$ for $1\le j\le d$. Given $\emptyset\ne J\subset[d]$,
note that $\pi_{J}\mu$ is (an embedded copy of) the self-affine measure
corresponding to $\Phi_{J}$ and $p$, where $\pi_{J}\mu:=\mu\circ\pi_{J}^{-1}$
is the push-forward of $\mu$ via $\pi_{J}$. In particular, $\pi_{J}\mu$
is exact dimensional.

For $\theta\in\mathcal{M}(\mathbb{R}^{d})$ and $k\ge1$ we write
$\theta^{*k}$ for the $k$-fold convolution of $\theta$ with itself,
and $\theta^{\times k}$ for the $k$-fold product of $\theta$ with
itself. Thus, for every bounded measurable $f:\mathbb{R}^{d}\rightarrow\mathbb{R}$,
\[
\int f\:d\theta^{*k}=\int f(x_{1}+..+x_{k})\:d\theta^{\times k}(x_{1},...,x_{k}).
\]

Given $\alpha_{1},...,\alpha_{d}\in\mathbb{R}$ we denote by $\mathrm{diag}(\alpha_{1},...,\alpha_{d})$
the linear map $D:\mathbb{R}^{d}\rightarrow\mathbb{R}^{d}$ with $De_{j}=\alpha_{j}e_{j}$
for $1\le j\le d$. For $t\in\mathbb{R}$ set,
\[
A_{\chi}^{t}:=\mathrm{diag}\left(2^{t\chi_{1}},...,2^{t\chi_{d}}\right).
\]
For $r>0$ and $y,x\in\mathbb{R}^{d}$ we write $S_{r}x:=rx$ and
$T_{y}x:=y+x$.

\subsection{Entropy}

In this subsection we provide some useful properties of entropy. These
properties will be used repeatedly throughout the rest of the paper,
often without further reference.

Let $(X,\mathcal{F})$ be a measurable space. Given a probability
measure $\theta$ on $X$ and a countable partition $\mathcal{D}\subset\mathcal{F}$
of $X$, the entropy of $\theta$ with respect to $\mathcal{D}$ is
defined by
\[
H(\theta,\mathcal{D}):=-\sum_{D\in\mathcal{D}}\theta(D)\log\theta(D).
\]
If $\mathcal{E}\subset\mathcal{F}$ is another countable partition
of $X$, the conditional entropy given $\mathcal{E}$ is defined as
follows
\[
H(\theta,\mathcal{D}\mid\mathcal{E}):=\sum_{E\in\mathcal{E}}\theta(E)\cdot H(\theta_{E},\mathcal{D}),
\]
where $\theta_{E}:=\theta(E)^{-1}\theta|_{E}$ for $E\in\mathcal{E}$
with $\theta(E)>0$. For a sub-$\sigma$-algebra $\mathcal{A}$ of
$\mathcal{F}$, the conditional entropy given $\mathcal{A}$ is defined
by
\[
H(\theta,\mathcal{D}\mid\mathcal{A}):=\int-\sum_{D\in\mathcal{D}}\theta(D\mid\mathcal{A})\log\theta(D\mid\mathcal{A})\:d\theta,
\]
where $\theta(D\mid\mathcal{A})$ is the conditional probability of
$D$ given $\mathcal{A}$.

In what follows all entropies are assumed to be finite. Note that
we always have the following upper bound,
\begin{equation}
H(\theta,\mathcal{D})\le\log\#\{D\in\mathcal{D}\::\:\theta(D)>0\}.\label{eq:card ub for ent}
\end{equation}

Given another countable partition $\mathcal{Q}\subset\mathcal{F}$
of $X$, conditional entropy satisfies the formula
\begin{equation}
H(\theta,\mathcal{D}\vee\mathcal{E}\mid\mathcal{Q})=H(\theta,\mathcal{E}\mid\mathcal{Q})+H(\theta,\mathcal{D}\mid\mathcal{Q}\vee\mathcal{E}),\label{eq:extended cond ent form}
\end{equation}
where
\[
\mathcal{D}\vee\mathcal{E}:=\{D\cap E\::\:D\in\mathcal{D}\text{ and }E\in\mathcal{E}\}.
\]
By taking $\mathcal{Q}$ to be the trivial partition,
\begin{equation}
H(\theta,\mathcal{D}\mid\mathcal{E})=H(\theta,\mathcal{D}\vee\mathcal{E})-H(\theta,\mathcal{E}).\label{eq:cond ent form}
\end{equation}

If $(Y,\mathcal{G})$ is another measurable space, $\mathcal{C}\subset\mathcal{G}$
is a countable partition of $Y$, and $f:X\rightarrow Y$ is measurable,
we have
\[
H(\theta,f^{-1}\mathcal{C})=H(f\theta,\mathcal{C}),
\]
where $f^{-1}\mathcal{C}:=\left\{ f^{-1}(C)\::\:C\in\mathcal{C}\right\} $.

The conditional entropy $H(\theta,\mathcal{D}\mid\mathcal{E})$ is
increasing in the $\mathcal{D}$-argument and decreasing in the $\mathcal{E}$-argument.
More precisely, if $\mathcal{D}',\mathcal{E}'\subset\mathcal{F}$
are countable partitions refining $\mathcal{D},\mathcal{E}$ respectively,
we have
\[
H(\theta,\mathcal{D}\mid\mathcal{E}')\le H(\theta,\mathcal{D}\mid\mathcal{E})\le H(\theta,\mathcal{D}'\mid\mathcal{E}).
\]

The entropy and conditional entropy functions are concave and almost
convex in the measure argument. That is, given probability measures
$\theta_{1},...,\theta_{k}$ on $X$ and a probability vector $q=(q_{i})_{i=1}^{k}$
so that $\theta=\sum_{i=1}^{k}q_{i}\theta_{i}$, we have
\begin{equation}
\sum_{i=1}^{k}q_{i}H(\theta_{i},\mathcal{D})\le H(\theta,\mathcal{D})\le\sum_{i=1}^{k}q_{i}H(\theta_{i},\mathcal{D})+H(q),\label{eq:conc =000026 almo conv of ent}
\end{equation}
where $H(q):=-\sum_{i=1}^{k}q_{i}\log q_{i}$ is the entropy of $q$.
These inequalities remain true with $H(\cdot,\mathcal{D}\mid\mathcal{E})$
in place of $H(\cdot,\mathcal{D})$.

Given $C\ge1$, we say that $\mathcal{D}$ and $\mathcal{E}$ are
$C$-commensurable if for each $D\in\mathcal{D}$ and $E\in\mathcal{E}$,
\[
\#\{E'\in\mathcal{E}\::\:E'\cap D\ne\emptyset\}\le C\text{ and }\#\{D'\in\mathcal{D}\::\:D'\cap E\ne\emptyset\}\le C.
\]
From (\ref{eq:card ub for ent}) and (\ref{eq:cond ent form}), it
follows easily that 
\[
\left|H(\theta,\mathcal{D})-H(\theta,\mathcal{E})\right|\le2\log C,
\]
whenever $\mathcal{D}$ and $\mathcal{E}$ are $C$-commensurable.

\subsection{Non-conformal partitions}

For $m\ge1$ and $n\in\mathbb{Z}$ let $\mathcal{D}_{n}^{m}$ denote
the level-$n$ dyadic partition of $\mathbb{R}^{m}$. That is,
\[
\mathcal{D}_{n}^{m}:=\left\{ \Big[\frac{k_{1}}{2^{n}},\frac{k_{1}+1}{2^{n}}\Big)\times...\times\Big[\frac{k_{m}}{2^{n}},\frac{k_{m}+1}{2^{n}}\Big)\::\:k_{1},...,k_{m}\in\mathbb{Z}\right\} .
\]
We sometimes omit the superscript $m$ from the notation when it is
clear from the context. Given $t\in\mathbb{R}$ we write $\mathcal{D}_{t}^{m}$
in place of $\mathcal{D}_{\left\lfloor t\right\rfloor }^{m}$, where
$\left\lfloor t\right\rfloor $ is the integral part of $t$.

For $n\in\mathbb{Z}$ set,
\[
\mathcal{E}_{n}:=\left\{ D_{1}\times...\times D_{d}\::\:D_{j}\in\mathcal{D}_{\chi_{j}n}^{1}\text{ for }1\le j\le d\right\} .
\]
We refer to $\mathcal{E}_{n}$ as the level-$n$ non-conformal partition
of $\mathbb{R}^{d}$ associated to $\chi_{1},...,\chi_{d}$.

It is easy to verify that,
\begin{equation}
\pi_{J}^{-1}\mathcal{E}_{n}\text{ and }T_{y}^{-1}\pi_{J}^{-1}\mathcal{E}_{n}\text{ are }O(1)\text{-commensurable for }J\subset[d]\text{ and }y\in\mathbb{R}^{d}.\label{eq:comens part after trans}
\end{equation}
It is also easy to check that,
\begin{equation}
A_{\chi}^{-t}\pi_{J}^{-1}\mathcal{E}_{n}\text{ and }\pi_{J}^{-1}\mathcal{E}_{n+t}\text{ are }O(1)\text{-commensurable for }J\subset[d]\text{ and }t\in\mathbb{R}.\label{eq:comens part after scaling}
\end{equation}

Let $X$ be a nonempty set and let $f,g:X\rightarrow\mathbb{R}^{d}$
and $C>1$ be with,
\[
|\pi_{j}\left(f(x)-g(x)\right)|\le C2^{-n\chi_{j}}\text{ for }1\le j\le d\text{ and }x\in X.
\]
In this case,
\begin{equation}
f^{-1}\pi_{J}^{-1}\mathcal{E}_{n}\text{ and }g^{-1}\pi_{J}^{-1}\mathcal{E}_{n}\text{ are }O\left(C^{d}\right)\text{-commensurable for }J\subset[d].\label{eq:comens part under inv img}
\end{equation}

\subsection{\label{subsec:Component-measures}Component measures}

For a set $X$, a partition $\mathcal{D}$ of $X$, and $x\in X$,
we denote by $\mathcal{D}(x)$ the unique $D\in\mathcal{D}$ containing
$x$.

Given $\theta\in\mathcal{M}(\mathbb{R}^{d})$ and $n\ge0$, let $\theta_{x,n}$
be a measure valued random element such that $\theta_{x,n}=\theta_{\mathcal{E}_{n}(x)}$
with probability $\theta(\mathcal{E}_{n}(x))$ for each $x\in\mathbb{R}^{d}$.
Thus, for an event $\mathcal{U}\subset\mathcal{M}(\mathbb{R}^{d})$,
\[
\mathbb{P}\left(\theta_{x,n}\in\mathcal{U}\right)=\theta\left\{ x\in\mathbb{R}^{d}\::\:\theta_{\mathcal{E}_{n}(x)}\in\mathcal{U}\right\} .
\]
We call $\theta_{x,n}$ an $n$th level (non-conformal) component
of $\theta$. For a given $x\in\mathbb{R}^{d}$ with $\theta(\mathcal{E}_{n}(x))>0$,
we often write $\theta_{x,n}$ in place of $\theta_{\mathcal{E}_{n}(x)}$
even when no randomness is involved.

Sometimes $n$ is chosen randomly as well, usually uniformly in some
range. For example, for integers $n_{2}\geq n_{1}$
\[
\mathbb{P}_{n_{1}\leq i\leq n_{2}}\left(\theta_{x,i}\in\mathcal{U}\right):=\frac{1}{n_{2}-n_{1}+1}\sum_{i=n_{1}}^{n_{2}}\mathbb{P}(\theta_{x,i}\in\mathcal{U}).
\]

We denote by $\mathbb{E}$ and $\mathbb{E}_{n_{1}\leq i\leq n_{2}}$
the expected value with respect to the probabilities $\mathbb{P}$
and $\mathbb{P}_{n_{1}\leq i\leq n_{2}}$. Thus, for a bounded measurable
$f:\mathcal{M}(\mathbb{R}^{d})\rightarrow\mathbb{R}$,
\[
\mathbb{E}_{i=n}\left(f(\theta_{x,i})\right)=\int f(\theta_{\mathcal{E}_{n}(x)})\:d\theta(x).
\]
Note that for $k\ge1$,
\[
H(\theta,\mathcal{D}_{n+k}|\mathcal{D}_{n})=\mathbb{E}_{i=n}(H(\theta_{x,i},\mathcal{D}_{i+k})).
\]

We finish this subsection with the following useful lemma. The proof
is identical to the proof of \cite[Lemma 3.4]{Ho1} and is therefore
omitted.
\begin{lem}
\label{lem:ent=00003Davg of loc ent}Let $R\ge1$ and $\theta\in\mathcal{M}_{\mathrm{c}}(\mathbb{R}^{d})$
be with $\mathrm{diam}(\mathrm{supp}(\theta))\le R$. Then for every
$n\ge m\ge1$,
\[
\frac{1}{n}H\left(\theta,\mathcal{E}_{n}\right)=\mathbb{E}_{1\le i\le n}\left(\frac{1}{m}H\left(\theta_{x,i},\mathcal{E}_{i+m}\right)\right)+O\left(\frac{m+\log R}{n}\right).
\]
\end{lem}

\section{\label{sec:Entropy-of-repeated-conv}Entropy of repeated self-convolutions}

The purpose of this section is to prove Proposition \ref{prop:full ent of slices}.
We start with some necessary preparations.

\subsection{The Berry-Esseen theorem}

Given $\theta\in\mathcal{M}(\mathbb{R})$ with a finite second moment,
we denote its mean by $\left\langle \theta\right\rangle $ and its
variance by $\mathrm{Var}(\theta)$. That is,
\[
\left\langle \theta\right\rangle :=\int x\:d\theta(x)\text{ and }\mathrm{Var}(\theta):=\int\left(x-\left\langle \theta\right\rangle \right)^{2}\:d\theta(x).
\]
For $s>0$ we write $\gamma_{s}\in\mathcal{M}(\mathbb{R})$ for the
Gaussian measure with mean $0$ and variance $s$. That is,
\[
\gamma_{s}(B)=\int_{B}\frac{1}{\sqrt{2\pi s}}\exp\left(-x^{2}/(2s)\right)\:dx\text{ for }B\subset\mathbb{R}\text{ Borel}.
\]
For $\theta\in\mathcal{M}(\mathbb{R})$ we denote its cumulative distribution
function by $F_{\theta}$. That is, $F_{\theta}(t)=\theta(-\infty,t]$
for $t\in\mathbb{R}$. We shall need the following version from \cite{Esseen}
of the Berry-Esseen theorem.
\begin{thm}
\label{thm:Berry-Esseen}There exists a global constant $C>1$ so
that the following holds. Let $\theta_{1},...,\theta_{k}\in\mathcal{M}(\mathbb{R})$
be given. For $1\le i\le k$ set $\rho_{i}:=\int|t|^{3}\:d\theta_{i}(t)$,
and assume that $\rho_{1},...,\rho_{k}<\infty$. Set $\sigma:=\theta_{1}*...*\theta_{k}$,
suppose that $\left\langle \sigma\right\rangle =0$, and set $s:=\mathrm{Var}(\sigma)$.
Then,
\[
\left|F_{\sigma}(t)-F_{\gamma_{s}}(t)\right|\le\frac{C}{\mathrm{Var}(\sigma)^{3/2}}\sum_{i=1}^{k}\rho_{i}\text{ for all }t\in\mathbb{R}.
\]
\end{thm}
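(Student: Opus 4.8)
The plan is to deduce the stated inequality from the classical Berry--Esseen theorem for sums of independent, \emph{centred} random variables, for which the bound with a universal constant is standard (this is essentially the content of Esseen's work; see also the accounts in the books of Feller or Petrov). The only work is a short reduction.

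First I would centre the factors. For $1\le i\le k$ put $m_{i}:=\left\langle \theta_{i}\right\rangle$ and let $\widetilde{\theta}_{i}$ be the push-forward of $\theta_{i}$ under $t\mapsto t-m_{i}$, so that $\left\langle \widetilde{\theta}_{i}\right\rangle =0$ and $\mathrm{Var}(\widetilde{\theta}_{i})=\mathrm{Var}(\theta_{i})$. Since convolution commutes with translation, and since $\sum_{i=1}^{k}m_{i}=\left\langle \sigma\right\rangle =0$ by hypothesis, we get $\widetilde{\theta}_{1}*\cdots*\widetilde{\theta}_{k}=\sigma$. In particular $\sigma$ is the convolution of the centred measures $\widetilde{\theta}_{i}$, and $s=\mathrm{Var}(\sigma)=\sum_{i=1}^{k}\mathrm{Var}(\widetilde{\theta}_{i})$ by additivity of variance under convolution. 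We may assume $s>0$, for otherwise every $\theta_{i}$ is a point mass, $\sigma=\delta_{0}$, and there is nothing to prove. Applying the classical Berry--Esseen theorem to $\widetilde{\theta}_{1},\dots,\widetilde{\theta}_{k}$ yields, with a global constant $C_{0}>1$,
\[
\sup_{t\in\mathbb{R}}\left|F_{\sigma}(t)-F_{\gamma_{s}}(t)\right|\le\frac{C_{0}}{s^{3/2}}\sum_{i=1}^{k}\widetilde{\rho}_{i},\qquad\widetilde{\rho}_{i}:=\int|t|^{3}\,d\widetilde{\theta}_{i}(t)=\int|t-m_{i}|^{3}\,d\theta_{i}(t).
\]

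It remains to replace the third central moments $\widetilde{\rho}_{i}$ by the moments $\rho_{i}=\int|t|^{3}\,d\theta_{i}(t)$ about the origin. From $|t-m_{i}|\le|t|+|m_{i}|$ and the convexity inequality $(|a|+|b|)^{3}\le4(|a|^{3}+|b|^{3})$ we get $\widetilde{\rho}_{i}\le4\rho_{i}+4|m_{i}|^{3}$, while Jensen's inequality gives $|m_{i}|^{3}=\bigl|\int t\,d\theta_{i}(t)\bigr|^{3}\le\int|t|^{3}\,d\theta_{i}(t)=\rho_{i}$. Hence $\widetilde{\rho}_{i}\le8\rho_{i}$; summing over $i$ and combining with the previous display gives the assertion with $C:=8C_{0}$.

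Since the result is quoted as a classical theorem, one could instead make the argument self-contained by recalling its standard proof: Esseen's smoothing inequality bounds $\sup_{t}|F_{\sigma}(t)-F_{\gamma_{s}}(t)|$ by $\int_{|\xi|\le T}\bigl|\widehat{\sigma}(\xi)-\widehat{\gamma_{s}}(\xi)\bigr|/|\xi|\,d\xi+O(1/T)$, and one then uses the Taylor expansion $\widehat{\widetilde{\theta}_{i}}(\xi)=1-\tfrac12\mathrm{Var}(\widetilde{\theta}_{i})\xi^{2}+O(\widetilde{\rho}_{i}|\xi|^{3})$ together with the elementary bound $\bigl|\prod_{i}z_{i}-\prod_{i}w_{i}\bigr|\le\sum_{i}|z_{i}-w_{i}|$ for $z_{i},w_{i}$ in the closed unit disc, optimising over $T\asymp s^{3/2}/\sum_{i}\widetilde{\rho}_{i}$. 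Either way the argument presents no real difficulty; the only point requiring a little care is the bookkeeping that renders the non-centred factors harmless, and this is precisely where the hypothesis $\left\langle \sigma\right\rangle =0$ is used.
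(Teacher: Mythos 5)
The paper does not prove this theorem at all; it simply quotes it as a version of the Berry--Esseen inequality and cites \cite{Esseen}. Your proposal goes further and actually derives it from the classical Berry--Esseen theorem for sums of independent \emph{centred} random variables, which is a worthwhile reduction because the statement here is slightly non-standard: the factors $\theta_{i}$ need not be individually centred, only the total mean $\langle\sigma\rangle$ is required to vanish, and the bound involves absolute third moments about the origin rather than central moments. Your reduction is correct and complete: centring each factor and using $\sum_{i}m_{i}=\langle\sigma\rangle=0$ leaves $\sigma$ unchanged and identifies it as a convolution of centred measures with the same variance, so the classical theorem yields the bound with the central moments $\widetilde{\rho}_{i}$; the elementary convexity bound $(|a|+|b|)^{3}\le 4(|a|^{3}+|b|^{3})$ together with Jensen's inequality $|m_{i}|^{3}\le\rho_{i}$ then gives $\widetilde{\rho}_{i}\le 8\rho_{i}$, and the assertion follows with $C:=8C_{0}$. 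The only cosmetic point is that the paper defines $\gamma_{s}$ only for $s>0$, so the degenerate case $s=0$ you briefly address is best regarded as excluded by hypothesis; this is irrelevant in the sequel, where $s$ is always strictly positive.
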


\subsection{A key lemma}

The purpose of this subsection it to prove the following lemma. For
$1\le j\le d$ and $\sigma\in\mathcal{M}_{\mathrm{c}}(e_{j}\mathbb{R})$
we write $\mathrm{Var}(\sigma)$ in place of $\mathrm{Var}(\sigma')$,
where $\sigma'\in\mathcal{M}_{\mathrm{c}}(\mathbb{R})$ is the push-forward
of $\sigma$ via the map sending $te_{j}$ to $t$.
\begin{lem}
\label{lem:ent of slices of O(1) measures}Let $0<\epsilon<1$ and
$m,l,k\in\mathbb{Z}_{>0}$ be with $\epsilon^{-1}\ll m\ll l\ll k$,
and let $\theta_{1},...,\theta_{k}\in\mathcal{M}_{\mathrm{c}}(\mathbb{R}^{d})$
be with $\mathrm{diam}(\mathrm{supp}(\theta_{i}))\le\epsilon^{-1}$
for $1\le i\le k$. Set $\sigma:=\theta_{1}*...*\theta_{k}$, and
suppose that there exists $1\le j\le d$ so that $\mathrm{Var}(\pi_{j}\sigma)\ge\epsilon k$
and $\mathrm{Var}(\pi_{j'}\sigma)\le\epsilon^{-1}$ for $1\le j'<j$.
Then for $a:=\left\lfloor \frac{1}{2\chi_{j}}\log k\right\rfloor $,
\[
\frac{1}{m}H\left(\sigma,\mathcal{E}_{l-a+m}\mid\mathcal{E}_{l-a}\vee\pi_{[d]\setminus\{j\}}^{-1}\mathcal{E}_{l-a+m}\right)>\chi_{j}-\epsilon.
\]
\end{lem}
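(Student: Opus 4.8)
The plan is to exploit the fact that $\sigma$ is a $k$-fold convolution, so that along the $j$-th coordinate axis it is a sum of many independent pieces with total variance $\mathrm{Var}(\pi_j\sigma)\ge\epsilon k$, while along the lower-index directions $j'<j$ the total variance is bounded. Write $\sigma'$ for the push-forward of $\pi_j\sigma$ to $\mathbb{R}$ (translated so that $\langle\sigma'\rangle=0$) and $s:=\mathrm{Var}(\sigma')=\mathrm{Var}(\pi_j\sigma)$. The Berry-Esseen theorem (Theorem \ref{thm:Berry-Esseen}) applied to the $k$ summands $\pi_j\theta_i$ shows that $|F_{\sigma'}(t)-F_{\gamma_s}(t)|\le C s^{-3/2}\sum_i\rho_i$; since each $\theta_i$ has support of diameter at most $\epsilon^{-1}$ we have $\rho_i=O(\epsilon^{-3})$, so the error is $O(\epsilon^{-3}k/s^{3/2})=O(\epsilon^{-9/2}k^{-1/2})$, which is tiny because $\epsilon^{-1}\ll k$. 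Thus $\sigma'$ is close in Kolmogorov distance to a Gaussian of variance $s\in[\epsilon k,\epsilon^{-1}k]$ (the upper bound on $s$ also following from the bounded diameters). The choice $a=\lfloor\frac{1}{2\chi_j}\log k\rfloor$ means $2^{-\chi_j a}\asymp k^{-1/2}$, so the dyadic scale $\chi_j(l-a+m)$ of interest sits at resolution $\asymp k^{-1/2}2^{-\chi_j(l+m)}$ relative to $2^{-\chi_j l}$; more to the point, relative to the standard deviation $\sqrt s\asymp k^{1/2}$ of $\sigma'$, the partition $\mathcal{D}^1_{\chi_j(l-a)}$ has cells of length $2^{-\chi_j(l-a)}\asymp k^{1/2}2^{-\chi_j l}$, i.e. of order $\sqrt s$ up to the factor $2^{-\chi_j l}$. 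I would rescale by $S_{2^{\chi_j l}}$ (or rather by $A_\chi^{l}$, using commensurability \eqref{eq:comens part after scaling}) so that the problem becomes: a measure $\asymp\sqrt s$-spread, Berry-Esseen close to a Gaussian, whose entropy in $\mathcal{D}^1_{-a}\vee\mathcal{D}^1_{-a+m}$ we must bound below by essentially $\chi_j m$ (the maximal value, since $\mathcal{D}^1_{-a+m}$ refines $\mathcal{D}^1_{-a}$ into $2^{\chi_j m}$-many pieces, up to $O(1)$).

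The core one-dimensional estimate is therefore: if $\nu\in\mathcal{M}(\mathbb{R})$ satisfies $\|F_\nu-F_{\gamma_s}\|_\infty\le\eta$ with $\eta$ exponentially small in $k$ and $s\asymp k$, then $\frac1m H(\nu,\mathcal{D}^1_{-a+m}\mid\mathcal{D}^1_{-a})>\chi_j m$ up to $\epsilon m$, where $2^{a}\asymp\sqrt s$. This I would prove by first observing that $H(\gamma_s,\mathcal{D}^1_{b+m}\mid\mathcal{D}^1_b)=m+O(1)$ whenever the cell size $2^{-b}$ is comparable to $\sqrt s$, because on that scale $\gamma_s$ looks locally like a bounded-density-ratio measure (the Gaussian density varies by a bounded factor across an interval of length $O(\sqrt s)$), and on each such dyadic interval the conditional measure has density bounded above and below by absolute constants times $1/\sqrt s$, so its entropy in $\mathcal{D}^1_{b+m}$ is $m+O(1)$. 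Then I would transfer this from $\gamma_s$ to $\nu$: Kolmogorov closeness with $\eta\ll 2^{-b}$-scale quantities means $\nu$ and $\gamma_s$ assign comparable mass to every dyadic interval at levels $b$ and $b+m$ — more precisely, using $\|F_\nu-F_{\gamma_s}\|_\infty\le\eta$ and the fact that $\gamma_s$ gives each level-$(b+m)$ interval mass $\asymp 2^{-m}/\sqrt s$ (which dwarfs $\eta$ since $\eta$ is exponentially small while $2^{-m}/\sqrt s$ is only polynomially small in $k$, as $m\ll k$), one gets that the level-$b$ and level-$(b+m)$ mass functions of $\nu$ are within a factor $1\pm o(1)$ of those of $\gamma_s$, uniformly; concavity/continuity of $x\log x$ then yields $|H(\nu,\mathcal{D}^1_{b+m}\mid\mathcal{D}^1_b)-H(\gamma_s,\mathcal{D}^1_{b+m}\mid\mathcal{D}^1_b)|=O(1)$, hence $\ge m-O(1)>\chi_j m-\epsilon m$ for $m\ge M(\epsilon)$ (after adjusting for the $\chi_j$ factor by noting $\mathcal{D}^1_{\chi_j(l-a)}$ versus $\mathcal{D}^1_{l-a}$ differ by the scaling $\chi_j$, so the relevant gain per unit of $m$ is $\chi_j$).

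Finally I would assemble the $d$-dimensional conclusion. Conditioning on $\pi_{[d]\setminus\{j\}}^{-1}\mathcal{E}_{l-a+m}$ and on $\mathcal{E}_{l-a}$, the quantity $H(\sigma,\mathcal{E}_{l-a+m}\mid\mathcal{E}_{l-a}\vee\pi_{[d]\setminus\{j\}}^{-1}\mathcal{E}_{l-a+m})$ equals $H(\sigma,\pi_j^{-1}\mathcal{D}^1_{\chi_j(l-a+m)}\mid\pi_j^{-1}\mathcal{D}^1_{\chi_j(l-a)}\vee\mathcal{A})$ for a suitable $\sigma$-algebra $\mathcal{A}$ refining the $j$-th-coordinate partition only at level $l-a$; since conditioning on the other coordinates at a finer level can only increase what we can say about coordinate $j$ at level $l-a$ (the partitions in different coordinates are independent as partitions of $\mathbb{R}^d$), I would bound this below by $H(\sigma,\pi_j^{-1}\mathcal{D}^1_{\chi_j(l-a+m)}\mid\pi_j^{-1}\mathcal{D}^1_{\chi_j(l-a)}\vee\text{(full }\pi_{[d]\setminus\{j\}}\text{-}\sigma\text{-algebra)})$, which by disintegration over $\pi_{[d]\setminus\{j\}}$ is an average of one-dimensional conditional entropies $\frac1m H(\sigma^y,\mathcal{D}^1_{\chi_j(l-a+m)}\mid\mathcal{D}^1_{\chi_j(l-a)})$ of conditional slices $\sigma^y$ of $\sigma$ on lines in direction $e_j$. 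The point is that a $\sigma$-generic such slice is itself (close to) a long convolution in the $e_j$-direction: conditioning the product structure $\theta_1\times\cdots\times\theta_k$ on the values of all non-$j$ coordinates still leaves the $j$-th coordinates as independent summands, so Berry-Esseen applies slice-wise, and one must check that for a nonnegligible — indeed, overwhelming — proportion of slices the $j$-th variance is still $\ge\epsilon' k$. Here the hypothesis $\mathrm{Var}(\pi_j\sigma)\ge\epsilon k$ together with $\mathrm{Var}(\pi_{j'}\sigma)\le\epsilon^{-1}$ for $j'<j$ is used: the bounded variance in the lower directions forces the conditioning to be "harmless" for those, while a Fubini/law-of-total-variance argument gives that the average over slices of the within-slice $e_j$-variance is $\mathrm{Var}(\pi_j\sigma)$ minus the between-slice part, and the between-slice part is controlled because it is governed by how much the conditional $e_j$-mean moves, which is itself small on the scale $\sqrt k$ — this is the step I expect to be the main obstacle, as it requires a genuine quantitative decoupling of the $j$-th coordinate from the others rather than a soft argument, and is where the ordering $m\ll l\ll k$ and the choice of $a$ must be used with care. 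Granting this, the one-dimensional estimate applies to almost every slice and, averaging, delivers $\frac1m H(\sigma,\mathcal{E}_{l-a+m}\mid\mathcal{E}_{l-a}\vee\pi_{[d]\setminus\{j\}}^{-1}\mathcal{E}_{l-a+m})>\chi_j-\epsilon$, completing the proof.
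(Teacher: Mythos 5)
Your opening moves match the paper's: Berry--Esseen applied to the $j$-th marginal of $\sigma$ (after recentring), plus a one-dimensional estimate that a measure Kolmogorov-close to a Gaussian with variance bounded away from $0$ and $\infty$ has nearly full conditional dyadic entropy. (Two small remarks on the 1D step: the Berry--Esseen error is $O(\epsilon^{-9/2}k^{-1/2})$, which is only polynomially small in $k$, not exponentially as you write; and the paper's Lemma~\ref{lem:close to normal --> full ent} establishes the 1D estimate by a soft compactness argument over the family $\{\gamma_s\}_{\epsilon\le s\le\epsilon^{-1}}$ rather than by a pointwise cell-mass comparison, which avoids having to argue that the Kolmogorov error dwarfs the masses of level-$(l-a+m)$ cells.)

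The genuine gap is in your assembly step, and you flag it yourself. You propose to disintegrate $\sigma$ over $\pi_{[d]\setminus\{j\}}$, apply Berry--Esseen to each conditional slice, and control the within-slice variance via a law-of-total-variance argument. But Berry--Esseen requires a sum of \emph{independent} random variables, and conditioning $X_1+\dots+X_k$ on the value of $\pi_{[d]\setminus\{j\}}(X_1+\dots+X_k)$ destroys the independence of the $j$-th components $\pi_j X_1,\dots,\pi_j X_k$ (think of a bridge-type conditioning). So the slices are not convolutions of independent pieces and the theorem does not apply to them. Nor is it clear how to bound the between-slice variance without some quantitative independence of the $j$-th coordinate from the others, which is not a hypothesis.

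The paper avoids the disintegration altogether. Rescale by $A_\chi^{-a}$ with $a=\lfloor\frac{1}{2\chi_j}\log k\rfloor$, so that $\mathrm{Var}(\tilde\pi_j A_\chi^{-a}\sigma)$ lands in the fixed window $[\epsilon, O(\epsilon^{-2})]$. The crucial observation is that after this rescaling every \emph{other} coordinate has variance $O(\epsilon^{-2}k^{-\eta})$ for a fixed $\eta>0$: for $j'<j$ because $\mathrm{Var}(\pi_{j'}\sigma)\le\epsilon^{-1}$ is bounded and gets multiplied by $2^{-2a\chi_{j'}}$, and for $j'>j$ because $\chi_{j'}>\chi_j$ means $A_\chi^{-a}$ contracts coordinate $j'$ strictly more than coordinate $j$, beating the trivial bound $\mathrm{Var}(\pi_{j'}\sigma)=O(\epsilon^{-2}k)$. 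Chebyshev then shows $A_\chi^{-a}\sigma$ gives nearly full mass to a tiny tube around the $e_j$-axis (the set $S$ in the paper), on which $A_\chi^{-a}x$ and $\pi_j A_\chi^{-a}x$ differ by $O(2^{-\chi_d(l+m)})$, so by the commensurability estimate \eqref{eq:comens part under inv img} the entropy of $A_\chi^{-a}\sigma_S$ with respect to $\mathcal{E}_{l+m}\mid\mathcal{C}$ equals that of its $\pi_j$-projection up to $O(1)$. The conditioning on $\pi_{[d]\setminus\{j\}}^{-1}\mathcal{E}_{l+m}$ is thereby rendered harmless on a set of measure $1-o(1)$, the exceptional piece is absorbed by a crude entropy bound, and the result reduces to the one-dimensional statement about the marginal $\pi_j A_\chi^{-a}\sigma$ alone. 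This is exactly the "quantitative decoupling" you identified as the obstacle; it is achieved not by analysing slices but by showing that after the right rescaling there is essentially only one slice.
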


Write $d_{\mathrm{L}}$ for the Lévy metric on $\mathcal{M}(\mathbb{R})$.
That is, for $\theta,\sigma\in\mathcal{M}(\mathbb{R})$
\[
d_{\mathrm{L}}(\theta,\sigma):=\inf\left\{ \epsilon>0\::\:F_{\theta}(t-\epsilon)-\epsilon\le F_{\sigma}(t)\le F_{\theta}(t+\epsilon)+\epsilon\text{ for all }t\in\mathbb{R}\right\} .
\]
It is well known that the topology induced by $d_{\mathrm{L}}$ is
equal to the topology of weak convergence.
\begin{lem}
\label{lem:close to normal --> full ent}For every $0<\epsilon<1$,
$m\ge1$ and $l\ge L(\epsilon,m)\ge1$, there exists $\delta=\delta(\epsilon,m,l)>0$
such that
\[
\frac{1}{m}H\left(\theta,\mathcal{D}_{l+m}^{1}\mid\mathcal{D}_{l}^{1}\right)>1-\epsilon
\]
for all $\theta\in\mathcal{M}(\mathbb{R})$ with $d_{\mathrm{L}}(\theta,\gamma_{s})\le\delta$
for some $\epsilon\le s\le\epsilon^{-1}$.
\end{lem}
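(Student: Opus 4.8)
The plan is to reduce the statement to a quantitative continuity estimate for the map $\theta \mapsto \frac{1}{m}H(\theta,\mathcal{D}^1_{l+m}\mid\mathcal{D}^1_l)$ on a suitable weak-$*$ compact set, and then compute the value of this functional exactly on Gaussians. First I would observe that, for fixed $m$ and $l$, the quantity $\frac{1}{m}H(\theta,\mathcal{D}^1_{l+m}\mid\mathcal{D}^1_l)$ depends only on the vector of masses $\bigl(\theta(D)\bigr)_{D\in\mathcal{D}^1_{l}}$ together with the masses $\bigl(\theta(D')\bigr)_{D'\in\mathcal{D}^1_{l+m}}$; on any fixed bounded interval this involves only finitely many dyadic cells, and outside a large compact set the Gaussian $\gamma_s$ (for $\epsilon\le s\le\epsilon^{-1}$) has negligible mass, so the tails contribute $o(1)$ to the entropy. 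Since $\theta$ is close to $\gamma_s$ in the L\'evy metric, its mass on the complement of a large interval is also small, uniformly in $s\in[\epsilon,\epsilon^{-1}]$. Hence it suffices to control the entropy contribution coming from a fixed large interval $[-R,R]$ with $R=R(\epsilon)$.

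Next I would establish the following: for $l$ large (depending on $\epsilon,m$) and for every $s\in[\epsilon,\epsilon^{-1}]$, one has $\frac{1}{m}H(\gamma_s,\mathcal{D}^1_{l+m}\mid\mathcal{D}^1_l) = 1 - o_{l\to\infty}(1)$, uniformly in $s$. This is because $\gamma_s$ has a bounded, continuous, bounded-away-from-zero density on $[-R,R]$, so after rescaling the level-$l$ dyadic cell to unit size the conditional measure $\gamma_s$ restricted to a typical level-$l$ cell is, up to an error tending to $0$, the uniform measure on that cell, whose conditional entropy at depth $m$ below is exactly $m$ (i.e.\ $\frac1m H = 1$). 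Quantitatively, on a level-$l$ dyadic interval $I$ the density of $\gamma_s$ varies by a factor $1+O(2^{-l})$ (using the Lipschitz bound on $\log$ of the Gaussian density on $[-R,R]$, with the Lipschitz constant controlled in terms of $\epsilon$), so $H(\gamma_s|_I,\mathcal{D}^1_{l+m}) = m - O(m\,2^{-l})$ for every such $I$ with $\gamma_s(I)>0$; averaging over $I$ and dividing by $m$ gives $\frac1m H(\gamma_s,\mathcal{D}^1_{l+m}\mid\mathcal{D}^1_l) \ge 1 - O(2^{-l}) > 1-\epsilon/2$ once $l\ge L(\epsilon,m)$.

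Finally I would add the perturbation argument: with $l$ and $m$ now fixed, the functional $\theta\mapsto \frac1m H(\theta,\mathcal{D}^1_{l+m}\mid\mathcal{D}^1_l)$, restricted to measures supported (up to mass $\le \epsilon/100$, say) on $[-R,R]$, is a continuous function of the finite mass vector $\bigl(\theta(D)\bigr)_{D\in\mathcal{D}^1_{l+m},\, D\subset[-R-1,R+1]}$, since $x\mapsto -x\log x$ is continuous and only finitely many cells are involved. A bound $d_{\mathrm L}(\theta,\gamma_s)\le\delta$ forces $|\theta(D)-\gamma_s(D)|$ to be small for each such cell $D$ up to a correction coming from the boundary of $D$; here I would use that the boundary points of the finitely many relevant dyadic intervals are continuity points of $F_{\gamma_s}$, so choosing $\delta=\delta(\epsilon,m,l)$ small enough makes all these discrepancies less than any prescribed amount, uniformly over $s\in[\epsilon,\epsilon^{-1}]$ by a compactness argument over $s$. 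Combining with the previous step yields $\frac1m H(\theta,\mathcal{D}^1_{l+m}\mid\mathcal{D}^1_l) > 1-\epsilon$.

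The main obstacle, and the point requiring the most care, is making all estimates uniform in $s\in[\epsilon,\epsilon^{-1}]$ and in the base scale $l$: one must ensure the "large interval" $R$, the Lipschitz constant of $\log$ of the density, and the threshold $\delta$ can all be chosen depending only on $\epsilon$ (and $m$, $l$), not on $s$. This is handled by the uniform continuity of $(s,x)\mapsto$ (Gaussian density) on the compact set $[\epsilon,\epsilon^{-1}]\times[-R,R]$ and the fact that $\gamma_s$ assigns no mass to any single point, so all the dyadic-cell boundaries are uniform continuity points of the family $\{F_{\gamma_s}\}_{s\in[\epsilon,\epsilon^{-1}]}$. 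A secondary technical point is correctly discarding the tails: one uses $d_{\mathrm L}(\theta,\gamma_s)\le\delta\le 1$ to bound $\theta(\mathbb{R}\setminus[-R,R])$ in terms of $\gamma_s(\mathbb{R}\setminus[-R+1,R-1])+2\delta$, which is $\le\epsilon/100$ for $R=R(\epsilon)$ large, and then controls the entropy contribution of the tail cells by (\ref{eq:card ub for ent}) together with the elementary bound on $-x\log x$ for small $x$.
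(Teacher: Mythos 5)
Your proposal is correct and follows essentially the same two-step plan as the paper: first establish $\frac1m H(\gamma_s,\mathcal{D}^1_{l+m}\mid\mathcal{D}^1_l)>1-\epsilon/2$ uniformly in $s\in[\epsilon,\epsilon^{-1}]$ for $l\ge L(\epsilon,m)$ using the smoothness of the Gaussian density, and then transfer this to measures $\theta$ close to $\gamma_s$. The only difference is stylistic: the paper handles the perturbation step abstractly, noting that the set of $\theta$ with $\frac1m H(\theta,\mathcal{D}^1_{l+m}\mid\mathcal{D}^1_l)>1-\epsilon$ is weakly open at each $\gamma_s$ (since Gaussians give zero mass to dyadic boundaries) and then using compactness of $\{\gamma_s\}_{\epsilon\le s\le\epsilon^{-1}}$ together with the fact that $d_{\mathrm L}$ metrizes weak convergence to get a uniform $\delta$, whereas you carry out the same continuity argument by hand through explicit finite-cell mass estimates and tail control; both are sound, and yours simply makes the lower semicontinuity quantitative.
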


\begin{proof}
Let $0<\epsilon<1$ and $m\ge1$ be given. For $s>0$ write $f_{s}$
for the density of $\gamma_{s}$. Since the map taking $(s,x)\in\mathbb{R}_{>0}\times\mathbb{R}$
to $f_{s}(x)$ is smooth, it is easy to verify that there exists $L\ge1$
so that
\begin{equation}
\frac{1}{m}H\left(\gamma_{s},\mathcal{D}_{l+m}\mid\mathcal{D}_{l}\right)>1-\epsilon/2\text{ for all }\epsilon\le s\le\epsilon^{-1}\text{ and }l\ge L.\label{eq:all s all l}
\end{equation}

Let $l\ge L$ be given. Set $\Gamma:=\{\gamma_{s}\}_{\epsilon\le s\le\epsilon^{-1}}$,
and write $\tau_{\mathrm{w}}$ for the topology of weak convergence
on $\mathcal{M}(\mathbb{R})$. From (\ref{eq:all s all l}) it follows
that there exists $V\in\tau_{\mathrm{w}}$ with $\Gamma\subset V$
so that,
\begin{equation}
\frac{1}{m}H\left(\theta,\mathcal{D}_{l+m}\mid\mathcal{D}_{l}\right)>1-\epsilon\text{ for all }\theta\in V.\label{eq:all theta in V}
\end{equation}
Since $\Gamma$ is compact with respect to $\tau_{\mathrm{w}}$ and
since $d_{\mathrm{L}}$ induces $\tau_{\mathrm{w}}$, there exists
$\delta>0$ so that $\theta\in V$ for all $\theta\in\mathcal{M}(\mathbb{R})$
with $d_{\mathrm{L}}(\theta,\gamma)\le\delta$ for some $\gamma\in\Gamma$.
This together with (\ref{eq:all theta in V}) completes the proof
of the lemma.
\end{proof}
\begin{proof}[Proof of Lemma \ref{lem:ent of slices of O(1) measures}]
Let $0<\epsilon<1$, $m,l,k\in\mathbb{Z}_{>0}$, $\theta_{1},...,\theta_{k},\sigma\in\mathcal{M}_{\mathrm{c}}(\mathbb{R}^{d})$
and $1\le j\le d$ be as in the statement of the lemma. Write $a:=\left\lfloor \frac{1}{2\chi_{j}}\log k\right\rfloor $.
For $1\le j'\le d$ and $x\in\mathbb{R}^{d}$ set $\tilde{\pi}_{j'}(x):=\left\langle e_{j'},x\right\rangle $.
Because of (\ref{eq:comens part after trans}), we may assume that
$\left\langle \tilde{\pi}_{j'}\sigma\right\rangle =0$ for $1\le j'\le d$
and $\mathrm{supp}(\theta_{i})\subset\left[-\epsilon^{-1},\epsilon^{-1}\right]^{d}$
for $1\le i\le k$.

For $1\le i\le k$ and $s=2,3$,
\[
\int|t|^{s}\:d\tilde{\pi}_{j}A_{\chi}^{-a}\theta_{i}(t)=2^{-a\chi_{j}s}\int|t|^{s}\:d\tilde{\pi}_{j}\theta_{i}(t)=O\left(\epsilon^{-s}k^{-s/2}\right).
\]
Thus,
\[
\mathrm{Var}\left(\tilde{\pi}_{j}A_{\chi}^{-a}\sigma\right)=\sum_{i=1}^{k}\mathrm{Var}\left(\tilde{\pi}_{j}A_{\chi}^{-a}\theta_{i}\right)=O\left(\epsilon^{-2}\right).
\]
Moreover,
\[
\mathrm{Var}\left(\tilde{\pi}_{j}A_{\chi}^{-a}\sigma\right)=2^{-2a\chi_{j}}\mathrm{Var}\left(\tilde{\pi}_{j}\sigma\right)\ge\epsilon.
\]
Hence,
\[
\frac{\sum_{i=1}^{k}\int|t|^{3}\:d\tilde{\pi}_{j}A_{\chi}^{-a}\theta_{i}(t)}{\mathrm{Var}(\tilde{\pi}_{j}A_{\chi}^{-a}\sigma)^{3/2}}=O\left(\epsilon^{-9/2}k^{-1/2}\right).
\]
From all of this, by Theorem \ref{thm:Berry-Esseen}, and from Lemma
\ref{lem:close to normal --> full ent}, it follows that
\[
\frac{1}{m}H\left(\tilde{\pi}_{j}A_{\chi}^{-a}\sigma,\mathcal{D}_{\chi_{j}(l+m)}^{1}\mid\mathcal{D}_{\chi_{j}l}^{1}\right)>\chi_{j}-\epsilon.
\]
Setting $\mathcal{C}:=\mathcal{E}_{l}\vee\pi_{[d]\setminus\{j\}}^{-1}\mathcal{E}_{l+m}$,
the last inequality implies that
\begin{equation}
\frac{1}{m}H\left(\pi_{j}A_{\chi}^{-a}\sigma,\mathcal{E}_{l+m}\mid\mathcal{C}\right)>\chi_{j}-\epsilon.\label{eq:proj ent is large}
\end{equation}

For $j'\in[d]\setminus\{j\}$ set
\[
S_{j'}:=\left\{ x\in\mathbb{R}^{d}\::\:\left|\pi_{j'}A_{\chi}^{-a}x\right|\le2^{-\chi_{d}(l+m)}\right\} ,
\]
and write $S:=\cap_{j'\in[d]\setminus\{j\}}S_{j'}$. For $x\in S$,
\[
\left|A_{\chi}^{-a}x-\pi_{j}A_{\chi}^{-a}x\right|=O\left(2^{-\chi_{d}(l+m)}\right).
\]
Hence from (\ref{eq:comens part under inv img}),
\begin{equation}
H\left(A_{\chi}^{-a}\sigma_{S},\mathcal{E}_{l+m}\mid\mathcal{C}\right)=H\left(\pi_{j}A_{\chi}^{-a}\sigma_{S},\mathcal{E}_{l+m}\mid\mathcal{C}\right)+O(1).\label{eq:ent close to proj ent}
\end{equation}

For $j<j'\le d$,
\[
\mathrm{Var}(\tilde{\pi}_{j'}A_{\chi}^{-a}\sigma)=2^{-2a\chi_{j'}}\sum_{i=1}^{k}\mathrm{Var}(\tilde{\pi}_{j'}\theta_{i})=O\left(\epsilon^{-2}k^{1-\chi_{j'}/\chi_{j}}\right).
\]
Additionally, by assumption, for $1\le j'<j$
\[
\mathrm{Var}(\tilde{\pi}_{j'}A_{\chi}^{-a}\sigma)\le\epsilon^{-1}2^{-2a\chi_{j'}}=O\left(\epsilon^{-1}k^{-\chi_{j'}/\chi_{j}}\right).
\]
Recall from Section \ref{subsec:The-setup} that $\chi_{1}<...<\chi_{d}$.
Thus, there exists $\eta>0$, depending only on $\chi_{1},...,\chi_{d}$,
so that
\[
\mathrm{Var}(\tilde{\pi}_{j'}A_{\chi}^{-a}\sigma)=O\left(\epsilon^{-2}k^{-\eta}\right)\text{ for }j'\in[d]\setminus\{j\}.
\]
From this, since $\left\langle \tilde{\pi}_{j'}\sigma\right\rangle =0$
for $1\le j'\le d$, and by Chebyshev's inequality,
\begin{multline}
\sigma\left(S^{c}\right)\le\sum_{j'\in[d]\setminus\{j\}}\sigma\left(S_{j'}^{c}\right)\le\sum_{j'\in[d]\setminus\{j\}}2^{2\chi_{d}(l+m)}\mathrm{Var}\left(\tilde{\pi}_{j'}A_{\chi}^{-a}\sigma\right)\\
=O\left(2^{2\chi_{d}(l+m)}\epsilon^{-2}k^{-\eta}\right).\label{eq:by Chebyshev}
\end{multline}

Since $\mathrm{supp}(\theta_{i})\subset\left[-\epsilon^{-1},\epsilon^{-1}\right]^{d}$
for $1\le i\le k$ and since $A_{\chi}^{-a}$ is contracting, we get
that $\mathrm{supp}(\pi_{j}A_{\chi}^{-a}\sigma)\subset\left[-k\epsilon^{-1},k\epsilon^{-1}\right]^{d}$.
Hence,
\[
H\left(\pi_{j}A_{\chi}^{-a}\sigma_{S^{c}},\mathcal{E}_{l+m}\right)=O\left(l+m+\log\left(k\epsilon^{-1}\right)\right).
\]
From this, from (\ref{eq:by Chebyshev}), and since $\epsilon^{-1},m,l\ll k$,
\[
\sigma\left(S^{c}\right)\frac{1}{m}H\left(\pi_{j}A_{\chi}^{-a}\sigma_{S^{c}},\mathcal{E}_{l+m}\mid\mathcal{C}\right)\le\epsilon.
\]
Thus, from (\ref{eq:proj ent is large}) and (\ref{eq:ent close to proj ent}),
and by the concavity and almost convexity of conditional entropy,
\begin{eqnarray*}
\frac{1}{m}H\left(A_{\chi}^{-a}\sigma,\mathcal{E}_{l+m}\mid\mathcal{C}\right) & \ge & \frac{\sigma(S)}{m}H\left(A_{\chi}^{-a}\sigma_{S},\mathcal{E}_{l+m}\mid\mathcal{C}\right)\\
 & \ge & \frac{\sigma(S)}{m}H\left(\pi_{j}A_{\chi}^{-a}\sigma_{S},\mathcal{E}_{l+m}\mid\mathcal{C}\right)-O\left(\frac{1}{m}\right)\\
 & + & \frac{\sigma\left(S^{c}\right)}{m}H\left(\pi_{j}A_{\chi}^{-a}\sigma_{S^{c}},\mathcal{E}_{l+m}\mid\mathcal{C}\right)-\epsilon\\
 & \ge & \frac{1}{m}H\left(\pi_{j}A_{\chi}^{-a}\sigma,\mathcal{E}_{l+m}\mid\mathcal{C}\right)-2\epsilon>\chi_{j}-3\epsilon.
\end{eqnarray*}
This together with (\ref{eq:comens part after scaling}) completes
the proof of the lemma.
\end{proof}

\subsection{Proof of the proposition}

The simple proof of the following lemma is almost identical to the
proof of \cite[Lemma 4.4]{Ho1} and is therefore omitted.
\begin{lem}
\label{lem:small Var --> small ent}For every $0<\epsilon<1$ and
$m\ge M(\epsilon)\ge1$ there exists $\delta=\delta(\epsilon,m)>0$
so that the following holds. Let $\theta\in\mathcal{M}_{\mathrm{c}}(\mathbb{R}^{d})$
be with $\mathrm{diam}(\mathrm{supp}(\theta))\le\epsilon^{-1}$ and
$\mathrm{Var}(\pi_{j}\theta)\le\delta$ for each $1\le j\le d$. Then
$\frac{1}{m}H\left(\theta,\mathcal{E}_{m}\right)<\epsilon$.
\end{lem}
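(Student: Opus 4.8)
The plan is to exploit that a uniformly tiny coordinate variance forces $\theta$, outside a set of negligible mass, into a box whose side in direction $e_j$ is much shorter than the scale $2^{-\chi_j m}$ of $\mathcal{E}_m$; such a box meets only $O_d(1)$ atoms of $\mathcal{E}_m$, and the feature that makes this possible is that $\delta$ is allowed to depend on $m$. Concretely, I would fix $0<\epsilon<1$ and $m\ge M(\epsilon)$, where $M(\epsilon):=4(d+1)/\epsilon$ depends only on $\epsilon$ and $d$, set $\mu_j:=\int\langle e_j,x\rangle\,d\theta(x)$ and $t_j:=2^{-\chi_j m-1}$ for $1\le j\le d$, and let $B:=\{x\in\mathbb{R}^d:|\langle e_j,x\rangle-\mu_j|\le t_j\text{ for all }1\le j\le d\}$. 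By Chebyshev's inequality and a union bound, $\theta(\mathbb{R}^d\setminus B)\le\sum_{j=1}^d\mathrm{Var}(\pi_j\theta)\,t_j^{-2}\le\delta\sum_{j=1}^d t_j^{-2}$. Since $2t_j=2^{-\chi_j m}\le 2^{-\lfloor\chi_j m\rfloor}$, the interval $[\mu_j-t_j,\mu_j+t_j]$ meets at most two dyadic intervals of level $\lfloor\chi_j m\rfloor$, so $B$ meets at most $2^d$ atoms of $\mathcal{E}_m$ and hence, by (\ref{eq:card ub for ent}), the normalised restriction $\theta_B$ satisfies $H(\theta_B,\mathcal{E}_m)\le d$.

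Next I would control the mass outside $B$. Because $\mathrm{diam}(\mathrm{supp}(\theta))\le\epsilon^{-1}$, the projection of $\mathrm{supp}(\theta)$ to the $j$th axis lies in an interval of length at most $\epsilon^{-1}$, which meets at most $\epsilon^{-1}2^{\chi_j m}+1\le 2^{\chi_j m+1+\log\epsilon^{-1}}$ dyadic intervals of level $\lfloor\chi_j m\rfloor$; multiplying over $j$ and using (\ref{eq:card ub for ent}) gives $\tfrac1m H(\theta_{\mathbb{R}^d\setminus B},\mathcal{E}_m)\le C_1$, where $C_1:=\sum_{j=1}^d\chi_j+\tfrac{d(1+\log\epsilon^{-1})}{m}$ is determined by $\epsilon$, $m$ and the fixed data. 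I would then pick $\delta=\delta(\epsilon,m)>0$ so small that $\delta\sum_{j=1}^d t_j^{-2}\le\tfrac{\epsilon}{2(C_1+1)}$, which is legitimate precisely because $t_j$ and $C_1$ depend only on $\epsilon$, $m$ and the fixed data. In particular $\theta(\mathbb{R}^d\setminus B)<1$, so $\theta(B)>0$, and $\theta(\mathbb{R}^d\setminus B)\cdot\tfrac1m H(\theta_{\mathbb{R}^d\setminus B},\mathcal{E}_m)\le\epsilon/2$.

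Finally, writing $\theta=\theta(B)\,\theta_B+\theta(\mathbb{R}^d\setminus B)\,\theta_{\mathbb{R}^d\setminus B}$ and applying the concavity and almost convexity of entropy (\ref{eq:conc =000026 almo conv of ent}), whose binary-entropy error term is at most $1$, I would obtain
\[
\tfrac1m H(\theta,\mathcal{E}_m)\le\tfrac{d}{m}+\tfrac\epsilon2+\tfrac1m=\tfrac{d+1}{m}+\tfrac\epsilon2\le\tfrac\epsilon4+\tfrac\epsilon2<\epsilon,
\]
using $m\ge M(\epsilon)=4(d+1)/\epsilon$; this is the claim, the degenerate case $\theta(\mathbb{R}^d\setminus B)=0$ being immediate since then $H(\theta,\mathcal{E}_m)\le d<\epsilon m$. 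I do not expect any genuine obstacle here; the only points requiring care are that $\delta$ must be chosen after $m$, so that the concentration box is small relative to $2^{-\chi_j m}$, and that the leftover mass $\theta(\mathbb{R}^d\setminus B)$ must be taken small relative to the scale-dependent entropy bound $C_1$ — which is exactly why the conclusion is only asymptotic in $m$.
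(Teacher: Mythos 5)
Your argument is correct and is precisely the standard Chebyshev-plus-almost-convexity proof that the paper references via Hochman's \cite[Lemma 4.4]{Ho1}: concentrate the measure in a box of side $2^{-\chi_j m-1}$ per coordinate (meeting $O_d(1)$ atoms of $\mathcal{E}_m$), bound the leftover mass by Chebyshev with $\delta$ chosen after $m$, control the leftover's entropy by the crude support-size bound, and assemble via (\ref{eq:conc =000026 almo conv of ent}). The bookkeeping (the choice $M(\epsilon)=4(d+1)/\epsilon$, the bound $C_1=\sum_j\chi_j+d(1+\log\epsilon^{-1})/m$, and picking $\delta$ so that $\delta\sum_j t_j^{-2}\le\epsilon/(2(C_1+1))$) all checks out, so this matches what the paper intends.
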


Recall from Section \ref{subsec:Basic-notations} that for $n\ge1$
we denote by $\lambda_{n}$ the normalized counting measure on $[n]$.
\begin{lem}
\label{lem:var of proj of comp > delta}For every $0<\epsilon<1$
there exists $\delta=\delta(\epsilon)>0$ so that the following holds.
Let $n\ge N(\epsilon)\ge1$ and $\theta\in\mathcal{M}_{\mathrm{c}}(\mathbb{R}^{d})$
be with $\mathrm{diam}(\mathrm{supp}(\theta))\le\epsilon^{-1}$ and
$\frac{1}{n}H(\theta,\mathcal{E}_{n})>\epsilon$. Then $\lambda_{n}(\mathrm{B})>\delta$,
where $\mathrm{B}$ is the set of all integers $1\le b\le n$ so that
\[
\mathbb{P}_{i=b}\left\{ \mathrm{Var}(\pi_{j}A_{\chi}^{i}\theta_{x,i})>\delta\text{ for some }1\le j\le d\right\} >\delta.
\]
\end{lem}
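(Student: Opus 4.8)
The plan is to connect the global entropy lower bound $\frac{1}{n}H(\theta,\mathcal{E}_n)>\epsilon$ to the presence of many scales at which typical components have non-negligible projected variance, using Lemma~\ref{lem:ent=00003Davg of loc ent} and the contrapositive of Lemma~\ref{lem:small Var --> small ent}. First I would fix $0<\epsilon<1$ and choose an auxiliary scale $m=m(\epsilon)\ge M(\epsilon/2)$ large enough for Lemma~\ref{lem:small Var --> small ent} to apply with threshold $\epsilon/2$; this produces a $\delta_0=\delta(\epsilon/2,m)>0$ such that any $\sigma\in\mathcal{M}_{\mathrm c}(\mathbb{R}^d)$ with $\mathrm{diam}(\mathrm{supp}(\sigma))\le 2/\epsilon$ and $\mathrm{Var}(\pi_j\sigma)\le\delta_0$ for all $j$ satisfies $\frac{1}{m}H(\sigma,\mathcal{E}_m)<\epsilon/2$. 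The candidate $\delta$ in the statement will be taken to be a small multiple of this $\delta_0$, with $n$ eventually taken large enough (depending on $\epsilon$ and $m$) to absorb the $O((m+\log R)/n)$ error term, where $R=\epsilon^{-1}$.

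Next I would apply Lemma~\ref{lem:ent=00003Davg of loc ent} with $R=\epsilon^{-1}$ to get
\[
\epsilon<\frac{1}{n}H(\theta,\mathcal{E}_n)=\mathbb{E}_{1\le i\le n}\Big(\frac{1}{m}H(\theta_{x,i},\mathcal{E}_{i+m})\Big)+O\Big(\frac{m+\log(1/\epsilon)}{n}\Big).
\]
For $n\ge N(\epsilon)$ the error term is at most $\epsilon/4$, so $\mathbb{E}_{1\le i\le n}\big(\frac{1}{m}H(\theta_{x,i},\mathcal{E}_{i+m})\big)>3\epsilon/4$. Since $\frac{1}{m}H(\theta_{x,i},\mathcal{E}_{i+m})\le d\cdot\chi_d$ is uniformly bounded (the relevant partition has bounded complexity in terms of $\chi$ and $m$; alternatively use the crude bound via diameter $\le\epsilon^{-1}$), a Markov/averaging argument shows that for a proportion $\ge\delta_1$ of scales $1\le b\le n$, with $\delta_1$ a fixed positive constant depending only on $\epsilon$ and $m$, we have $\mathbb{P}_{i=b}\big(\frac{1}{m}H(\theta_{x,i},\mathcal{E}_{i+m})\ge\epsilon/2\big)\ge\delta_1$. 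Here I need that each component $\theta_{x,i}$, after applying $A_\chi^i$, has bounded support: indeed $A_\chi^i\theta_{x,i}$ is supported on a set of diameter $O(1)$ since $\theta_{x,i}$ lives in a cell of $\mathcal{E}_i$, whose $j$-th side has length $2^{-\chi_j\lfloor i\rfloor}$, so $\pi_j$ of it has length $O(1)$ after scaling by $2^{\chi_j i}$; and $\frac{1}{m}H(A_\chi^i\theta_{x,i},\mathcal{E}_{i+m}\text{ translated})=\frac{1}{m}H(\theta_{x,i},\mathcal{E}_{i+m})$ up to an $O(1/m)$ commensurability error via \eqref{eq:comens part after scaling} and \eqref{eq:comens part after trans}.

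Then, for each such scale $b$ and each $x$ with $\frac{1}{m}H(\theta_{x,i},\mathcal{E}_{i+m})\ge\epsilon/2$ (equivalently $\frac{1}{m}H(A_\chi^b\theta_{x,b},\mathcal{E}_m)\ge\epsilon/2-O(1/m)\ge\epsilon/4$ after renormalization, for $m$ large), the contrapositive of Lemma~\ref{lem:small Var --> small ent} applied to $\sigma=A_\chi^b\theta_{x,b}$ forces $\mathrm{Var}(\pi_j A_\chi^b\theta_{x,b})>\delta_0$ for some $1\le j\le d$. Combining, for each of the $\ge\delta_1 n$ scales $b$ we get $\mathbb{P}_{i=b}\{\mathrm{Var}(\pi_j A_\chi^i\theta_{x,i})>\delta_0\text{ for some }j\}\ge\delta_1$, so setting $\delta:=\min\{\delta_0,\delta_1\}$ (shrunk slightly if needed) we conclude $\lambda_n(\mathrm B)\ge\delta_1>\delta'$ for an appropriate final $\delta'$. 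I expect the main technical nuisance — not a deep obstacle — to be the careful bookkeeping of the rescaling $A_\chi^i$: one must verify that the entropy and the variance conditions transport correctly between $\theta_{x,i}$ at scale $i$ and the $O(1)$-support measure $A_\chi^i\theta_{x,i}$ at scale $0$, using the commensurability relations \eqref{eq:comens part after scaling}, \eqref{eq:comens part after trans}, and that this introduces only $O(1/m)$ additive errors, which are harmless after fixing $m$ large in terms of $\epsilon$.
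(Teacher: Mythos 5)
Your proposal is correct and follows essentially the same route as the paper's own proof: apply Lemma~\ref{lem:ent=00003Davg of loc ent} to convert the global entropy bound into an average of local $m$-step entropies, transport to the $O(1)$-support rescaled components $A_\chi^i\theta_{x,i}$ via the commensurability relations, use the uniform $O(1)$ bound on these local entropies plus a Markov-type averaging argument to find a positive proportion of scale--component pairs with non-negligible local entropy, and finish by the contrapositive of Lemma~\ref{lem:small Var --> small ent}. (The paper introduces a smaller auxiliary threshold $\eta$ with $\epsilon^{-1}\ll\eta^{-1}\ll m$ rather than reusing $\epsilon/2$ and $\epsilon/4$, which makes the commensurability-error bookkeeping automatic; in your version you should take $\delta_0=\delta(\epsilon/4,m)$ rather than $\delta(\epsilon/2,m)$ to match the $\epsilon/4$ entropy lower bound you actually obtain after the $O(1/m)$ renormalization loss, but this is a trivial adjustment.)
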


\begin{proof}
Let $0<\epsilon,\eta,\delta<1$ and $m,n\in\mathbb{Z}_{>0}$ be with
\[
\epsilon^{-1}\ll\eta^{-1}\ll m\ll\delta^{-1}\ll n,
\]
and let $\theta\in\mathcal{M}_{\mathrm{c}}(\mathbb{R}^{d})$ be with
$\mathrm{diam}(\mathrm{supp}(\theta))\le\epsilon^{-1}$ and $\frac{1}{n}H(\theta,\mathcal{E}_{n})>\epsilon$.
By Lemma \ref{lem:ent=00003Davg of loc ent} and since $\frac{1}{n}H(\theta,\mathcal{E}_{n})>\epsilon$,
\[
\mathbb{E}_{1\le i\le n}\left(\frac{1}{m}H\left(\theta_{x,i},\mathcal{E}_{i+m}\right)\right)>\epsilon/2.
\]
Thus, by (\ref{eq:comens part after scaling}),
\[
\mathbb{E}_{1\le i\le n}\left(\frac{1}{m}H\left(A_{\chi}^{i}\theta_{x,i},\mathcal{E}_{m}\right)\right)>\epsilon/3.
\]
Note that,
\[
\frac{1}{m}H\left(A_{\chi}^{i}\theta_{x,i},\mathcal{E}_{m}\right)=O(1)\text{ for }1\le i\le n\text{ and }\theta\text{-a.e. }x.
\]
Hence,
\[
\mathbb{P}_{1\le i\le n}\left(\frac{1}{m}H\left(A_{\chi}^{i}\theta_{x,i},\mathcal{E}_{m}\right)\ge\eta\right)>\eta.
\]
This together with Lemma \ref{lem:small Var --> small ent} completes
the proof of the lemma.
\end{proof}
We are now ready to prove Proposition \ref{prop:full ent of slices},
whose statement we first recall.
\begin{prop*}
For every $0<\epsilon<1$ there exists $\delta>0$ so that the following
holds. Let $m_{1},...,m_{d},k_{1},...,k_{d},n\in\mathbb{Z}_{>0}$
be such that $\epsilon^{-1}\ll m_{d}$, $k_{1}\ll n$, $m_{j}\ll k_{j}$
for $1\le j\le d$, and $k_{j+1}\ll m_{j}$ for $1\le j<d$. Let $\theta\in\mathcal{M}_{\mathrm{c}}(\mathbb{R}^{d})$
be with $\mathrm{diam}(\mathrm{supp}(\theta))\le\epsilon^{-1}$ and
$\frac{1}{n}H(\theta,\mathcal{E}_{n})>\epsilon$. For $1\le j\le d$
write $\mathrm{Q}_{j}$ for the set of all integers $1\le q\le n$
so that,
\[
\frac{1}{m_{j}}H\left(\theta^{*k_{j}},\mathcal{E}_{q+m_{j}}\mid\mathcal{E}_{q}\vee\pi_{[d]\setminus\{j\}}^{-1}\mathcal{E}_{q+m_{j}}\right)>\chi_{j}-\epsilon.
\]
Then there exists $1\le j\le d$ such that $\lambda_{n}\left(\mathrm{Q}_{j}\right)>\delta$.
\end{prop*}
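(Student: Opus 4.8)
The plan is to combine the component/concavity formalism with the Berry--Esseen input of Lemma~\ref{lem:ent of slices of O(1) measures}. For a partition $\mathcal{P}$ of $\mathbb{R}^{d}$, write $\theta=\sum_{P\in\mathcal{P}}\theta(P)\,\theta_{P}$ and expand, by bilinearity of convolution, $\theta^{*k_{j}}=\sum_{P_{1},\dots,P_{k_{j}}}\big(\prod_{i}\theta(P_{i})\big)\,\theta_{P_{1}}*\cdots*\theta_{P_{k_{j}}}$. Since conditional entropy is concave in the measure, any lower bound of the form $\frac{1}{m_{j}}H(\theta_{P_{1}}*\cdots*\theta_{P_{k_{j}}},\mathcal{E}_{q+m_{j}}\mid\mathcal{E}_{q}\vee\pi_{[d]\setminus\{j\}}^{-1}\mathcal{E}_{q+m_{j}})>\chi_{j}-\epsilon'$ valid for all tuples outside a set of probability $\eta$ passes, up to the factor $1-\eta$, to $\theta^{*k_{j}}$; since $k_{j}$ is enormous we will be able to take $\eta$ far smaller than $\epsilon/\chi_{j}$, so this loss is immaterial.

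Next I would locate the ``right'' coordinate $j$ and a positive-density set of scales. Feeding $\theta$ into Lemma~\ref{lem:var of proj of comp > delta} yields a set of scales of $\lambda_{n}$-measure at least some $\delta_{0}=\delta_{0}(\epsilon)>0$ at which a $\delta_{0}$-fraction of the level-$b$ components $\theta_{x,b}$ satisfy $\mathrm{Var}(\pi_{j'}A_{\chi}^{b}\theta_{x,b})>\delta_{0}$ for some coordinate $j'$; a double pigeonhole over these scales and over the $d$ coordinates fixes one coordinate $j$ and a set $\mathrm{B}$ with $\lambda_{n}(\mathrm{B})\geq\delta_{0}/d$ such that for every $b\in\mathrm{B}$ a $\geq\delta_{0}/d$-fraction (in $\theta$-measure) of the $x$ have $\mathrm{Var}(\pi_{j}A_{\chi}^{b}\theta_{x,b})>\delta_{0}$. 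For $b\in\mathrm{B}$ one then passes to a partition $\mathcal{P}_{b}$ obtained from the level-$b$ grid by refining coordinates $1,\dots,j-1$ by $\asymp\log k_{j}$ extra dyadic levels: this keeps every $A_{\chi}^{b}\theta_{P}$ of diameter $O(1)$ while forcing $\sum_{i=1}^{k_{j}}\mathrm{Var}(\pi_{j'}A_{\chi}^{b}\theta_{P_{i}})\leq\epsilon'^{-1}$ for all $j'<j$.

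Given such $b$, a Chernoff bound shows that, outside a set of tuples of probability $\eta\ll\epsilon/\chi_{j}$, a fixed positive fraction of the $P_{i}$ still satisfy $\mathrm{Var}(\pi_{j}A_{\chi}^{b}\theta_{P_{i}})>\delta_{0}$, so $\sigma:=A_{\chi}^{b}(\theta_{P_{1}}*\cdots*\theta_{P_{k_{j}}})$ meets the hypotheses of Lemma~\ref{lem:ent of slices of O(1) measures} in direction $j$ for a parameter $\epsilon'=\epsilon'(\epsilon)>0$. Applying that lemma with window $m_{j}$ and with its free scale taken to be $a:=\lfloor\frac{1}{2\chi_{j}}\log k_{j}\rfloor$ (legitimate since $\epsilon'^{-1}\ll m_{j}\ll a\ll k_{j}$) gives $\frac{1}{m_{j}}H(\sigma,\mathcal{E}_{m_{j}}\mid\mathcal{E}_{0}\vee\pi_{[d]\setminus\{j\}}^{-1}\mathcal{E}_{m_{j}})>\chi_{j}-\epsilon'$; undoing the rescaling by $A_{\chi}^{b}$ through the commensurability relation (\ref{eq:comens part after scaling}) turns this into the $\mathrm{Q}_{j}$-defining inequality at scale $q:=b$, and the concavity step of the first paragraph promotes it to $\theta^{*k_{j}}$. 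Hence $\mathrm{B}\subseteq\mathrm{Q}_{j}$ and $\lambda_{n}(\mathrm{Q}_{j})\geq\delta_{0}/d$, which is the assertion with $\delta:=\delta_{0}/d$.

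The delicate point — and, I expect, where the ordering $\chi_{1}<\cdots<\chi_{d}$ and the hierarchy $\epsilon^{-1}\ll m_{d}\ll k_{d}\ll\cdots\ll m_{1}\ll k_{1}\ll n$ are really used — is the combination of two constraints on the auxiliary coordinates that pull in opposite directions: to apply Lemma~\ref{lem:ent of slices of O(1) measures} one must refine the components so that coordinates $1,\dots,j-1$ contribute only $O(1)$ total variance (otherwise $\mathrm{Var}(\pi_{j'}\sigma)$ grows like $k_{j}$), yet such a refinement can annihilate the coordinate-$j$ variance that powers the argument (this is exactly what happens, e.g., for measures carried near a diagonal line), and moreover the $\mathrm{Q}_{j}$-condition conditions also on coordinates $j+1,\dots,d$ at the fine scale $q+m_{j}$, so one must choose $j$ so that the spread of $\theta^{*k_{j}}$ in coordinate $j$ is ``intrinsic'' rather than inherited through correlation with the other coordinates. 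The natural device is an induction on $d$: if $\pi_{[d-1]}\theta$ already carries a definite proportion of the entropy one invokes the statement in $\mathbb{R}^{d-1}$ for $\pi_{[d-1]}\theta$ and argues that the extra conditioning on coordinate $d$ is harmless, while otherwise coordinate $d$ carries the entropy essentially independently of coordinates $1,\dots,d-1$ — here $\chi_{d}$ being the largest exponent makes the conditioning on coordinates $1,\dots,d-1$ at scale $q+m_{d}$ too coarse to over-determine coordinate $d$ — and one runs the Berry--Esseen argument directly with $j=d$. Making the ``essentially independently'' in the second alternative quantitatively precise, at the level of component variances and of entropies across scales, is the main obstacle, and is presumably where the bulk of the work lies.
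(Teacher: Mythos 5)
You correctly identify the architecture — Lemma \ref{lem:var of proj of comp > delta} to locate a positive-density set of variance-heavy scales, Lemma \ref{lem:ent of slices of O(1) measures} as the Berry--Esseen engine, and concavity of conditional entropy to pass from typical component convolutions back to $\theta^{*k_j}$ — and, more importantly, you correctly isolate the crux: Lemma \ref{lem:ent of slices of O(1) measures} demands that the convolved components have $O(1)$ total variance in every coordinate $j'<j$ while retaining variance $\gtrsim k$ in coordinate $j$, and a naive pigeonhole on ``a coordinate with variance $>\delta_{0}$'' supplies only the second. But your fix does not close the gap. Refining the level-$b$ grid in coordinates $1,\dots,j-1$ by $\asymp\log k_{j}$ extra dyadic levels indeed caps $\sum_{i}\mathrm{Var}(\pi_{j'}A_{\chi}^{b}\theta_{P_{i}})$, but — as you note yourself in the last paragraph — for a component concentrated near a diagonal line this refinement slices coordinate $j$ along with coordinate $j'$ and destroys the within-cell variance in direction $j$ that Berry--Esseen needs. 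Your proposed fallback (an induction on $d$, with a dichotomy on whether $\pi_{[d-1]}\theta$ carries the entropy) is not carried out, and you explicitly flag it as the remaining obstacle. So the proposal has a genuine gap at the central step, and you know where it is. (Incidentally, the worry you raise about conditioning on coordinates $j+1,\dots,d$ is a non-issue: Lemma \ref{lem:ent of slices of O(1) measures} handles them automatically, since after rescaling by $A_{\chi}^{-a}$ with $a\asymp\frac{1}{2\chi_{j}}\log k$ the variance in any coordinate $j'>j$ shrinks like $k^{1-\chi_{j'}/\chi_{j}}\to0$.)

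The paper closes the gap without refining the partition and without induction on $d$. It sets coordinate-dependent variance thresholds $\eta_{j'}:=k_{j'+1}^{-1}$ for $j'<d$ and $\delta^{-1}\ll\eta_{d}^{-1}\ll m_{d}$, so that $\eta_{1}<\eta_{2}<\dots<\eta_{d}<\delta$, and pigeonholes not on the largest-variance coordinate but on the smallest $j$ with $\mathrm{Var}(\pi_{j}A_{\chi}^{b}\theta_{x,b})>\eta_{j}$. This produces a coordinate $j$, a set $\mathrm{B}_{j}$ of scales of density $>\delta/d$, and for each $b\in\mathrm{B}_{j}$ a set $Y$ of $\theta$-mass $>\delta/d$ on which simultaneously $\mathrm{Var}(\pi_{j}A_{\chi}^{b}\theta_{x,b})>\eta_{j}$ and $\mathrm{Var}(\pi_{j'}A_{\chi}^{b}\theta_{x,b})\le\eta_{j'}$ for all $j'<j$. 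The threshold hierarchy is tuned exactly so that the lemma applies to $\sigma$, the convolution of a sub-tuple of $k\asymp\delta k_{j}/d$ components drawn from $Y$: for $j'<j$ one has $\eta_{j'}=k_{j'+1}^{-1}\le k_{j}^{-1}$ (because $k_{j'+1}\ge k_{j}$, which is precisely where $k_{j+1}\ll m_{j}\ll k_{j}$ enters), hence $\mathrm{Var}(\pi_{j'}\sigma)\le k\eta_{j'}\le1$; and $\eta_{j}^{-1}\ll m_{j}$ (since $\eta_{j}^{-1}=k_{j+1}\ll m_{j}$ for $j<d$, and by construction for $j=d$), so the lemma may be invoked with its own $\epsilon$ taken to be $\eta_{j}$. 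The sub-tuple is available with probability $>1-\delta$ by the weak law of large numbers, concavity then transfers the fiber-entropy bound to $\theta^{*k_{j}}$, and shifting scales gives $\lambda_{n}(\mathrm{Q}_{j})\ge\delta/(2d)$. In short: the resolution is a hierarchical threshold pigeonhole that directly produces components with the required variance profile, rather than trying to manufacture that profile by refining the partition.
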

\begin{proof}
Let $0<\epsilon<1$ be given, and let $m_{1},...,m_{d},k_{1},...,k_{d},n\in\mathbb{Z}_{>0}$
and $\theta\in\mathcal{M}_{\mathrm{c}}(\mathbb{R}^{d})$ be as in
the statement of the proposition. 

Let $0<\delta<1$ be with $\epsilon^{-1}\ll\delta^{-1}\ll m_{d}$.
By Lemma \ref{lem:var of proj of comp > delta} we may assume that
$\lambda_{n}(\mathrm{B})>\delta$, where $\mathrm{B}$ is the set
of all $1\le b\le n$ so that
\[
\mathbb{P}_{i=b}\left\{ \mathrm{Var}(\pi_{j}A_{\chi}^{i}\theta_{x,i})>\delta\text{ for some }1\le j\le d\right\} >\delta.
\]
Let $0<\eta_{d}<1$ be with $\delta^{-1}\ll\eta_{d}^{-1}\ll m_{d}$,
and for $1\le j<d$ set $\eta_{j}:=k_{j+1}^{-1}$. For $1\le j\le d$
let $\mathrm{B}_{j}$ be the set of all $1\le b\le n$ so that,
\[
\mathbb{P}_{i=b}\left\{ \mathrm{Var}\left(\pi_{j}A_{\chi}^{i}\theta_{x,i}\right)>\eta_{j}\text{ and }\mathrm{Var}\left(\pi_{j'}A_{\chi}^{i}\theta_{x,i}\right)\le\eta_{j'}\text{ for }1\le j'<j\right\} >\delta/d.
\]
It clearly holds that $\mathrm{B}\subset\cup_{j=1}^{d}\mathrm{B}_{j}$.
Thus, from $\lambda_{n}(\mathrm{B})>\delta$ it follows that $\lambda_{n}(\mathrm{B}_{j})>\delta/d$
for some $1\le j\le d$. Fix such a $j$ until the end of the proof.

Let $l\in\mathbb{Z}_{>0}$ be with $m_{j}\ll l\ll k_{j}$. Note that,
\[
\epsilon^{-1}\ll\delta^{-1}\ll\eta_{j}^{-1}\ll m_{j}\ll l\ll k_{j}\ll n\text{ and }\eta_{j'}\le k_{j}^{-1}\text{ for }1\le j'<j.
\]
Let $b\in\mathrm{B}_{j}$ be given, and set
\[
Y:=\left\{ x\in\mathbb{R}^{d}\::\:\mathrm{Var}\left(\pi_{j}A_{\chi}^{b}\theta_{x,b}\right)>\eta_{j}\text{ and }\mathrm{Var}\left(\pi_{j'}A_{\chi}^{b}\theta_{x,b}\right)\le\eta_{j'}\text{ for }1\le j'<j\right\} .
\]
Also, set $k:=\left\lfloor \frac{\delta k_{j}}{2d}\right\rfloor $
and
\[
Z:=\left\{ (x_{1},...,x_{k_{j}})\in(\mathbb{R}^{d})^{k_{j}}\::\:\#\left\{ 1\le s\le k_{j}\::\:x_{s}\in Y\right\} \ge k\right\} .
\]
Since $b\in\mathrm{B}_{j}$ and $\delta^{-1}\ll k_{j}$, it follows
by the weak law of large numbers that $\theta^{\times k_{j}}(Z)>1-\delta$.

Let $(x_{1},...,x_{k_{j}})\in Z$ be given. By the definition of $Z$,
there exist integers $1\le s_{1}<...<s_{k}\le k_{j}$ so that $x_{s_{i}}\in Y$
for $1\le i\le k$. Note that,
\[
\mathrm{diam}\left(\mathrm{supp}\left(A_{\chi}^{b}\theta_{x_{s_{i}},b}\right)\right)=O(1)\text{ for }1\le i\le k.
\]
Set,
\[
\sigma:=A_{\chi}^{b}\theta_{x_{s_{1}},b}*...*A_{\chi}^{b}\theta_{x_{s_{k}},b}.
\]
We have
\[
\mathrm{Var}(\pi_{j}\sigma)=\sum_{i=1}^{k}\mathrm{Var}\left(\pi_{j}A_{\chi}^{b}\theta_{x_{s_{i}},b}\right)\ge k\eta_{j},
\]
and for each $1\le j'<j$
\[
\mathrm{Var}(\pi_{j'}\sigma)=\sum_{i=1}^{k}\mathrm{Var}\left(\pi_{j'}A_{\chi}^{b}\theta_{x_{s_{i}},b}\right)\le k\eta_{j'}\le1.
\]
Moreover, from $l\ll k_{j}$ and $\delta^{-1}\ll k_{j}$ we get $l\ll k$.
From these facts, since $\delta^{-1},\eta_{j}^{-1}\ll m_{j}\ll l$,
and by Lemma \ref{lem:ent of slices of O(1) measures}, it follows
that for $a:=\left\lfloor \frac{1}{2\chi_{j}}\log k\right\rfloor $
we have
\begin{equation}
\frac{1}{m_{j}}H\left(\sigma,\mathcal{E}_{l-a+m_{j}}\mid\mathcal{E}_{l-a}\vee\pi_{[d]\setminus\{j\}}^{-1}\mathcal{E}_{l-a+m_{j}}\right)>\chi_{j}-\delta.\label{eq:lb on ent of sig in prop}
\end{equation}

For $s\in\mathbb{Z}$ set $\mathcal{C}_{s}:=\mathcal{E}_{s+l-a}\vee\pi_{[d]\setminus\{j\}}^{-1}\mathcal{E}_{s+l-a+m_{j}}$.
From (\ref{eq:lb on ent of sig in prop}), by (\ref{eq:comens part after trans}),
and by the concavity of conditional entropy,
\[
\frac{1}{m_{j}}H\left(*_{s=1}^{k_{j}}A_{\chi}^{b}\theta_{x_{s},b},\mathcal{E}_{l-a+m_{j}}\mid\mathcal{C}_{0}\right)>\chi_{j}-2\delta,
\]
where $*_{s=1}^{k_{j}}A_{\chi}^{b}\theta_{x_{s},b}$ denotes the convolution
of $A_{\chi}^{b}\theta_{x_{1},b},A_{\chi}^{b}\theta_{x_{2},b},...,A_{\chi}^{b}\theta_{x_{k_{j}},b}$.
From the last inequality and by (\ref{eq:comens part after scaling}),
\begin{equation}
\frac{1}{m_{j}}H\left(*_{s=1}^{k_{j}}\theta_{x_{s},b},\mathcal{E}_{b+l-a+m_{j}}\mid\mathcal{C}_{b}\right)>\chi_{j}-3\delta\text{ for }(x_{1},...,x_{k_{j}})\in Z.\label{eq:for all x in Z}
\end{equation}

By the decomposition
\[
\theta^{*k_{j}}=\int*_{s=1}^{k_{j}}\theta_{x_{s},b}\:\;d\theta^{\times k_{j}}(x_{1},...,x_{k_{j}}),
\]
from the concavity of conditional entropy, by (\ref{eq:for all x in Z}),
and since $\theta^{\times k_{j}}(Z)>1-\delta$, it follows that for
all $b\in\mathrm{B}_{j}$
\begin{multline}
\frac{1}{m_{j}}H\left(\theta^{*k_{j}},\mathcal{E}_{b+l-a+m_{j}}\mid\mathcal{E}_{b+l-a}\vee\pi_{[d]\setminus\{j\}}^{-1}\mathcal{E}_{b+l-a+m_{j}}\right)\ge\\
\int_{Z}\frac{1}{m_{j}}H\left(*_{s=1}^{k_{j}}\theta_{x_{s},b},\mathcal{E}_{b+l-a+m_{j}}\mid\mathcal{C}_{b}\right)\:d\theta^{\times k_{j}}(x_{1},...,x_{k_{j}})\ge\chi_{j}-O(\delta).\label{eq:lb all b in B_j}
\end{multline}

Let $\mathrm{B}_{j}'$ be the set of all $1\le b\le n$ so that $b-l+a\in\mathrm{B}_{j}$.
From $l,a,\delta^{-1}\ll n$ and $\lambda_{n}(\mathrm{B}_{j})>\delta/d$,
it follows that $\lambda_{n}(\mathrm{B}_{j}')>\delta/(2d)$. Moreover,
since $\epsilon^{-1}\ll\delta^{-1}$ and by (\ref{eq:lb all b in B_j}),
\[
\frac{1}{m_{j}}H\left(\theta^{*k_{j}},\mathcal{E}_{b+m_{j}}\mid\mathcal{E}_{b}\vee\pi_{[d]\setminus\{j\}}^{-1}\mathcal{E}_{b+m_{j}}\right)>\chi_{j}-\epsilon\text{ for }b\in\mathrm{B}_{j}',
\]
which completes the proof of the proposition.
\end{proof}

\section{\label{sec:Asymptotic-entropies-of mu}Asymptotic entropies of $\mu$}

Recall from Section \ref{subsec:About-the-proof} that,
\[
\kappa:=\chi_{d}\dim\mu-\sum_{j=1}^{d-1}(\chi_{d}-\chi_{j}).
\]
The purpose of this section is to show that $\frac{1}{n}H\left(\mu,\mathcal{E}_{n}\right)$
tends to $\kappa$ when $\dim\pi_{[d-1]}\mu=d-1$, and to prove Lemmata
\ref{lem:lb on ent of comp} and \ref{lem:lb on proj of comp}.

Given $1\le j\le d$, $n\ge1$ and $i_{1}...i_{n}=u\in\Lambda^{n}$
we write $r_{u,j}:=r_{i_{1},j}\cdot...\cdot r_{i_{n},j}$ and $r_{\emptyset,j}:=1$,
where $\emptyset$ is the empty word here. From (\ref{eq:diag mat in 1-dim subgroup})
it follows that for each $u\in\Lambda^{*}$ there exists $t_{u}\in\mathbb{R}_{\ge0}$
so that,
\begin{equation}
\mathrm{diag}\left(|r_{u,1}|,...,|r_{u,d}|\right)=A_{\chi}^{-t_{u}}.\label{eq:lin part =00003D A^-t}
\end{equation}
Writing $uv$ for the concatenation of $u,v\in\Lambda^{*}$, we clearly
have $t_{uv}=t_{u}+t_{v}$. For $n\ge1$ set
\[
\mathcal{U}_{n}:=\left\{ i_{1}...i_{l}\in\Lambda^{*}\::\:t_{i_{1}...i_{l-1}}<n\le t_{i_{1}...i_{l}}\right\} ,
\]
and note that
\begin{equation}
n\le t_{u}\le n+O(1)\text{ for all }u\in\mathcal{U}_{n}.\label{eq:t_n in =00005Bn,n+C=00005D}
\end{equation}

\begin{lem}
\label{lem:lim of ent wrt non-conf part}Suppose that $\dim\pi_{[d-1]}\mu=d-1$.
Then,
\[
\underset{n}{\lim}\:\frac{1}{n}H\left(\mu,\mathcal{E}_{n}\right)=\kappa.
\]
\end{lem}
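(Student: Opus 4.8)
The plan is to compute $\frac{1}{n}H(\mu,\mathcal{E}_n)$ by decomposing $\mu$ along cylinder sets indexed by the stopping-time family $\mathcal{U}_n$, on which the linear parts of the compositions $\varphi_u$ are essentially $A_\chi^{-t_u}$ with $t_u\approx n$ by \eqref{eq:t_n in =00005Bn,n+C=00005D}. First I would write $\mu=\sum_{u\in\mathcal{U}_n}p_u\,\varphi_u\mu$ and use concavity and almost-convexity of entropy \eqref{eq:conc =000026 almo conv of ent} together with the fact that $H((p_u)_{u\in\mathcal{U}_n})=O(n)$ only up to the right order — actually the additive entropy term here must be handled carefully, so I would instead bound $H(\mu,\mathcal{E}_n)$ from above and below by comparing $\mathcal{E}_n$ with the partition into the cylinders $\varphi_u(\mathrm{supp}\,\mu)$ refined appropriately. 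The key geometric point is that for $u\in\mathcal{U}_n$, the image $\varphi_u(\mathrm{supp}\,\mu)$ has diameter $\asymp 2^{-t_u\chi_j}$ in the $e_j$-direction, so in the first $d-1$ coordinates it is contained in $O(1)$ cells of $\mathcal{D}^1_{\chi_j n}$, while in the last coordinate $\varphi_u$ contracts by $\approx 2^{-t_u\chi_d}$ and the measure $\mu$ must still be resolved at scale $2^{-\chi_d n}$, i.e. at relative scale $2^{-(n-t_u)\chi_d}=O(1)$ — wait, that is also $O(1)$; the real content is that after applying $\varphi_u$ the remaining resolution needed is governed by $\pi_{[d-1]}\mu$ being full-dimensional.

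More precisely, the approach I would take is the following. Using \eqref{eq:comens part after trans}, \eqref{eq:comens part after scaling}, \eqref{eq:lin part =00003D A^-t} and \eqref{eq:t_n in =00005Bn,n+C=00005D}, each $\varphi_u\mu$ for $u\in\mathcal{U}_n$ is, up to $O(1)$-commensurability of partitions and bounded translations, a copy of $A_\chi^{-n}\mu$ localised in a single $\mathcal{E}_n$-cell in the first $d-1$ coordinates. Hence
\[
H(\mu,\mathcal{E}_n)=H\big((p_u)_{u\in\mathcal{U}_n}\big)+\sum_{u\in\mathcal{U}_n}p_u\,H(\varphi_u\mu,\mathcal{E}_n)+O(1),
\]
and each summand $H(\varphi_u\mu,\mathcal{E}_n)$ is comparable to $H(\mu,\pi_{[d-1]}^{-1}\mathcal{E}_n\vee(\text{coordinate-}d\text{ scale }0))$, which by $\dim\pi_{[d-1]}\mu=d-1$ is $\big(\sum_{j=1}^{d-1}\chi_j\big)n\cdot(d-1)^{-1}\cdot(\dots)$ — here I would invoke exact dimensionality of $\pi_{[d-1]}\mu$: since $\dim\pi_{[d-1]}\mu=d-1$, one has $\frac1n H(\pi_{[d-1]}\mu,\mathcal{D}^{d-1}_{\chi_{d-1}n})\to$ the appropriate value, and more carefully $\frac1n H(\mu,\mathcal{E}_n)$ splits as projection entropy plus fibre entropy. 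The combinatorial identity to check is that $H((p_u)_{u\in\mathcal{U}_n})/n\to H(p)/\mathbb{E}_p(t_{\cdot})$-type quantity, and that the Lyapunov-exponent bookkeeping yields exactly $\kappa=\chi_d\dim\mu-\sum_{j=1}^{d-1}(\chi_d-\chi_j)$ after substituting $\dim\mu$ via the Ledrappier–Young formula referenced in Section \ref{subsec:Ledrappier-Young-formula}.

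In practice I expect the cleanest route is: (i) show $\frac1n H(\mu,\mathcal{E}_n)$ converges, e.g. by a subadditivity argument using \eqref{eq:extended cond ent form} applied to $\mathcal{E}_{n+m}\mid\mathcal{E}_n$ and the self-affinity $\mu=\sum p_i\varphi_i\mu$, plus \eqref{eq:comens part after scaling} to absorb the non-conformal scaling; (ii) identify the limit by a renewal-type computation over $\mathcal{U}_n$, comparing $H(\mu,\mathcal{E}_n)$ with $H(\pi_{[d-1]}\mu,\mathcal{E}_n)+H(\mu,\mathcal{E}_n\mid\pi_{[d-1]}^{-1}\mathcal{E}_n)$; the first term is $\big(\sum_{j=1}^{d-1}\chi_j\big)(d-1)$ — no: it equals $n\sum_{j<d}\chi_j$ times nothing, rather $\frac1n H(\pi_{[d-1]}\mu,\mathcal{E}_n)\to \dim\pi_{[d-1]}\mu$ times a weighted exponent, and the assumption $\dim\pi_{[d-1]}\mu=d-1$ forces it to be maximal, i.e. $=\sum_{j=1}^{d-1}\chi_j$; the conditional term $\frac1n H(\mu,\mathcal{E}_n\mid\pi_{[d-1]}^{-1}\mathcal{E}_n)$ is a "last-coordinate" entropy which by the Ledrappier–Young formula equals $\chi_d\dim\mu-\sum_{j=1}^{d-1}\chi_j$ — adding these gives $\chi_d\dim\mu$, which is \emph{not} $\kappa$, so the correct splitting must instead compare $\mathcal{E}_n$ against $\mathcal{D}^d_{\chi_d n}$ and account for the coarser resolution $\chi_j n<\chi_d n$ in coordinates $j<d$, producing the deficit $\sum_{j=1}^{d-1}(\chi_d-\chi_j)n$; this is exactly where $\kappa$ comes from. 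The main obstacle I anticipate is precisely this last bookkeeping step: carefully relating $H(\mu,\mathcal{E}_n)$ to the (already-known) value $\dim\mu$ of the pointwise dimension — equivalently, showing $\frac1n H(\mu,\mathcal{E}_n)=\chi_d\dim\mu-\sum_{j=1}^{d-1}(\chi_d-\chi_j)$ rather than some other linear combination — which requires using $\dim\pi_{[d-1]}\mu=d-1$ to guarantee that no entropy is "lost" in the first $d-1$ coordinates, combined with a Fubini/disintegration argument over the fibres of $\pi_{[d-1]}$ and the exact dimensionality of both $\mu$ and $\pi_{[d-1]}\mu$.
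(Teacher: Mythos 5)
Your proposal is an exploratory sketch rather than a proof, and it contains both an arithmetic error and a missing core estimate. The initial claim $H(\mu,\mathcal{E}_n)=H\bigl((p_u)_{u\in\mathcal{U}_n}\bigr)+\sum_u p_u H(\varphi_u\mu,\mathcal{E}_n)+O(1)$ is wrong as stated: concavity and almost-convexity give a two-sided bound whose gap is the full $H\bigl((p_u)\bigr)=\Theta(n)$, not $O(1)$; and since for $u\in\mathcal{U}_n$ the set $\varphi_u(\mathrm{supp}\,\mu)$ occupies $O(1)$ cells of $\mathcal{E}_n$, each $H(\varphi_u\mu,\mathcal{E}_n)$ is itself $O(1)$, so the claimed identity collapses to $H(\mu,\mathcal{E}_n)\approx H\bigl((p_u)\bigr)$ — a true-looking but nontrivial statement (it is essentially Lemma~\ref{lem:ent of ev nu is close to ent of mu} plus the exponential-separation input of Theorem~\ref{thm:main result for measures}) that is certainly not available at this point in the development. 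In your fibre-over-$\pi_{[d-1]}$ split, the projection term is indeed $\sum_{j<d}\chi_j$, but the conditional (last-coordinate) term should be $\chi_d\bigl(\dim\mu-(d-1)\bigr)=\chi_d\dim\mu-(d-1)\chi_d$, not $\chi_d\dim\mu-\sum_{j<d}\chi_j$; with the correct value the sum does give $\kappa$, so the contradiction you flag is a computational slip, not evidence that the partition split is wrong. You then correctly guess that one should compare $\mathcal{E}_n$ against $\mathcal{D}^d_{\chi_d n}$, which is precisely the paper's route.

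The genuine gap is that you never prove the hard half of the lemma. The paper writes $\frac1n H(\mu,\mathcal{E}_n)=\frac1n H(\mu,\mathcal{D}^d_{\chi_d n})-\frac1n H(\mu,\mathcal{D}^d_{\chi_d n}\mid\mathcal{E}_n)$, and the first term tends to $\chi_d\dim\mu$ by exact dimensionality. The second term is bounded above by $\sum_{j<d}(\chi_d-\chi_j)$ by a trivial cell count, but the matching \emph{lower} bound is the substance of the proof: one decomposes $\mu=\sum_{u\in\mathcal{U}_n}p_u\varphi_u\mu$, uses~\eqref{eq:lin part =00003D A^-t}, \eqref{eq:comens part after trans}, \eqref{eq:comens part after scaling} and \eqref{eq:t_n in =00005Bn,n+C=00005D} to reduce to $\frac1n H\bigl(\mu,\vee_{j<d}\pi_j^{-1}\mathcal{D}_{(\chi_d-\chi_j)n}\bigr)$, and then invokes $\dim\pi_{[d-1]}\mu=d-1$ together with the conditional-entropy formula and another cell count to force this quantity up to $\sum_{j<d}(\chi_d-\chi_j)-o(1)$. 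In your fibre formulation the analogous missing step is the lower bound $\frac1n H(\mu,\pi_d^{-1}\mathcal{D}_{\chi_d n}\mid\pi_{[d-1]}^{-1}\mathcal{E}_n)\ge\chi_d(\dim\mu-(d-1))-o(1)$, which would require tying the coarse non-conformal conditioning to the Ledrappier--Young fibre dimension; you identify this as the main obstacle but do not supply an argument. Without that estimate (in either formulation) the lemma is not proved.
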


\begin{proof}
Let us first show that,
\begin{equation}
\underset{n}{\lim}\:\frac{1}{n}H\left(\mu,\mathcal{D}_{\chi_{d}n}\mid\mathcal{E}_{n}\right)=\sum_{j=1}^{d-1}\left(\chi_{d}-\chi_{j}\right).\label{eq:lim=00003DSum_j=00003D1^d-1}
\end{equation}
For each $n\ge1$ and $E\in\mathcal{E}_{n}$
\[
\log\#\left\{ D\in\mathcal{D}_{\chi_{d}n}^{d}\::\:E\cap D\ne\emptyset\right\} =\sum_{j=1}^{d-1}\left(\left\lfloor \chi_{d}n\right\rfloor -\left\lfloor \chi_{j}n\right\rfloor \right),
\]
which implies
\begin{equation}
\underset{n}{\limsup}\:\frac{1}{n}H\left(\mu,\mathcal{D}_{\chi_{d}n}\mid\mathcal{E}_{n}\right)\le\sum_{j=1}^{d-1}\left(\chi_{d}-\chi_{j}\right).\label{eq:limsup <=00003D Sum_j=00003D1^d-1}
\end{equation}

Let $0<\epsilon<1$ and let $n\ge1$ be with $\epsilon^{-1}\ll n$.
From the decomposition $\mu=\sum_{u\in\mathcal{U}_{n}}p_{u}\cdot\varphi_{u}\mu$,
by the concavity of conditional entropy, and from (\ref{eq:lin part =00003D A^-t})
and (\ref{eq:comens part after trans}),
\[
\frac{1}{n}H\left(\mu,\mathcal{D}_{\chi_{d}n}\mid\mathcal{E}_{n}\right)\ge\sum_{u\in\mathcal{U}_{n}}p_{u}\cdot\frac{1}{n}H\left(A_{\chi}^{-t_{u}}\mu,\mathcal{D}_{\chi_{d}n}\mid\mathcal{E}_{n}\right)-O(1/n).
\]
Thus, by (\ref{eq:comens part after scaling}) and (\ref{eq:t_n in =00005Bn,n+C=00005D}),
\[
\frac{1}{n}H\left(\mu,\mathcal{D}_{\chi_{d}n}\mid\mathcal{E}_{n}\right)\ge\sum_{u\in\mathcal{U}_{n}}p_{u}\cdot\frac{1}{n}H\left(\mu,A_{\chi}^{t_{u}}\mathcal{D}_{\chi_{d}n}\right)-O(1/n).
\]
From (\ref{eq:t_n in =00005Bn,n+C=00005D}) it also follows that for
each $u\in\mathcal{U}_{n}$ the partitions $A_{\chi}^{t_{u}}\mathcal{D}_{\chi_{d}n}$
and $\vee_{j=1}^{d}\pi_{j}^{-1}\mathcal{D}_{(\chi_{d}-\chi_{j})n}$
are $O(1)$-commensurable. Hence, by the last inequality,
\begin{equation}
\frac{1}{n}H\left(\mu,\mathcal{D}_{\chi_{d}n}\mid\mathcal{E}_{n}\right)\ge\frac{1}{n}H\left(\mu,\vee_{j=1}^{d-1}\pi_{j}^{-1}\mathcal{D}_{(\chi_{d}-\chi_{j})n}\right)-O(1/n).\label{eq:>=00003D ent wrt weird part}
\end{equation}

From (\ref{eq:>=00003D ent wrt weird part}) and by the conditional
entropy formula (\ref{eq:cond ent form}),
\begin{eqnarray*}
\frac{1}{n}H\left(\mu,\mathcal{D}_{\chi_{d}n}\mid\mathcal{E}_{n}\right) & \ge & \frac{1}{n}H\left(\mu,\pi_{[d-1]}^{-1}\mathcal{D}_{(\chi_{d}-\chi_{1})n}\right)\\
 & - & \frac{1}{n}H\left(\mu,\pi_{[d-1]}^{-1}\mathcal{D}_{(\chi_{d}-\chi_{1})n}\mid\vee_{j=1}^{d-1}\pi_{j}^{-1}\mathcal{D}_{(\chi_{d}-\chi_{j})n}\right)-O(1/n).
\end{eqnarray*}
Since $\dim\pi_{[d-1]}\mu=d-1$ and $\epsilon^{-1}\ll n$,
\[
\frac{1}{n}H\left(\mu,\pi_{[d-1]}^{-1}\mathcal{D}_{(\chi_{d}-\chi_{1})n}\right)\ge(\chi_{d}-\chi_{1})(d-1)-\epsilon.
\]
For each $Q\in\vee_{j=1}^{d-1}\pi_{j}^{-1}(\mathcal{D}_{(\chi_{d}-\chi_{j})n})$
\[
\log\#\left\{ R\in\pi_{[d-1]}^{-1}\mathcal{D}_{(\chi_{d}-\chi_{1})n}\::\:Q\cap R\ne\emptyset\right\} \le d+n\sum_{j=1}^{d-1}\left(\chi_{j}-\chi_{1}\right),
\]
and so
\[
\frac{1}{n}H\left(\mu,\pi_{[d-1]}^{-1}\mathcal{D}_{(\chi_{d}-\chi_{1})n}\mid\vee_{j=1}^{d-1}\pi_{j}^{-1}\mathcal{D}_{(\chi_{d}-\chi_{j})n}\right)\le\sum_{j=1}^{d-1}\left(\chi_{j}-\chi_{1}\right)+O(1/n).
\]
By combining all of this we get
\[
\frac{1}{n}H\left(\mu,\mathcal{D}_{\chi_{d}n}\mid\mathcal{E}_{n}\right)\ge\sum_{j=1}^{d-1}\left(\chi_{d}-\chi_{j}\right)-2\epsilon,
\]
which together with (\ref{eq:limsup <=00003D Sum_j=00003D1^d-1})
implies (\ref{eq:lim=00003DSum_j=00003D1^d-1}).

From (\ref{eq:lim=00003DSum_j=00003D1^d-1}), since $\frac{1}{n}H\left(\mu,\mathcal{D}_{\chi_{d}n}\right)\rightarrow\chi_{d}\dim\mu$
as $n\rightarrow\infty$, and by the definition of $\kappa$, we now
get
\[
\kappa=\underset{n}{\lim}\:\frac{1}{n}\left(H\left(\mu,\mathcal{D}_{\chi_{d}n}\right)-H\left(\mu,\mathcal{D}_{\chi_{d}n}\mid\mathcal{E}_{n}\right)\right)=\underset{n}{\lim}\:\frac{1}{n}H\left(\mu,\mathcal{E}_{n}\right),
\]
which completes the proof of the lemma.
\end{proof}
Next we prove Lemma \ref{lem:lb on proj of comp}, whose statement
we first recall.
\begin{lem*}
Let $J\subset[d]$ be with $\dim\pi_{J}\mu=|J|$. Then for all $\epsilon>0$,
$m\ge M(\epsilon)\ge1$ and $n\ge1$,
\[
\frac{1}{m}H\left(\mu,\pi_{J}^{-1}\mathcal{E}_{n+m}\mid\mathcal{E}_{n}\right)\ge\sum_{j\in J}\chi_{j}-\epsilon.
\]
\end{lem*}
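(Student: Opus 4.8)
The plan is to reduce the statement to a two-sided comparison between conditional entropies with respect to the non-conformal partitions $\mathcal{E}_n$ and the conditional entropies of the projected measure $\pi_J\mu$ with respect to the corresponding non-conformal partitions on $\mathbb{R}^J$. First I would observe that, by the conditional entropy formula \eqref{eq:cond ent form} and the identity $H(\mu,\pi_J^{-1}\mathcal{C})=H(\pi_J\mu,\mathcal{C})$ for partitions $\mathcal{C}$ of $\mathbb{R}^J$, the quantity $H(\mu,\pi_J^{-1}\mathcal{E}_{n+m}\mid\mathcal{E}_n)$ can be written as a difference $H(\mu,\pi_J^{-1}\mathcal{E}_{n+m}\vee\mathcal{E}_n)-H(\mu,\mathcal{E}_n)$, and the point is to compare this with $H(\pi_J\mu,\mathcal{E}_{n+m}^{J}\mid\mathcal{E}_n^{J})$ where $\mathcal{E}_n^{J}$ denotes the level-$n$ non-conformal partition on $\mathbb{R}^J$ built from $\{\chi_j\}_{j\in J}$. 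The natural way to exploit $\dim\pi_J\mu=|J|$ is: because $\pi_J\mu$ is exact-dimensional of full dimension $|J|$, for $m\ge M(\epsilon)$ and \emph{all} $n\ge 1$ one has $\tfrac1m H(\pi_J\mu,\mathcal{D}^{|J|}_{n+m}\mid\mathcal{D}^{|J|}_{n})\ge |J|-\epsilon$; a self-similar/self-affine measure of full dimension gives this uniformly in $n$ (this is standard, e.g. via the Ledrappier–Young / exact-dimensionality machinery, or by the argument in \cite{Ho1} that full dimension forces near-maximal conditional entropy at every scale).

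The key technical bridge is to pass from ordinary dyadic conditioning on $\mathbb{R}^{|J|}$ to the non-conformal conditioning $H(\mu,\pi_J^{-1}\mathcal{E}_{n+m}\mid\mathcal{E}_n)$ on $\mathbb{R}^d$. Here I would use the self-affinity decomposition $\mu=\sum_{u\in\mathcal{U}_n}p_u\,\varphi_u\mu$ from the start of this section, exactly as in the proof of Lemma \ref{lem:lim of ent wrt non-conf part}. For $u\in\mathcal{U}_n$ the linear part of $\varphi_u$ is $A_\chi^{-t_u}$ with $t_u\in[n,n+O(1)]$ by \eqref{eq:t_n in =00005Bn,n+C=00005D}. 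By concavity of conditional entropy, $H(\mu,\pi_J^{-1}\mathcal{E}_{n+m}\mid\mathcal{E}_n)\ge\sum_{u}p_u H(\varphi_u\mu,\pi_J^{-1}\mathcal{E}_{n+m}\mid\mathcal{E}_n)-O(1)$; then, using that $\pi_J$ commutes with the diagonal map $A_\chi^{t}$ (it acts on the $J$-coordinates) and the commensurability relations \eqref{eq:comens part after trans} and \eqref{eq:comens part after scaling}, each summand is, up to $O(1)$, $H\big(\mu,\pi_J^{-1}A_\chi^{t_u}\mathcal{E}_{n+m}\big)=H\big(\mu,\pi_J^{-1}\mathcal{E}_{n+m-t_u}\big)$ — but one must be slightly careful since $t_u$ and the target scale $n+m$ are comparable; the correct bookkeeping is that conditioning on $\mathcal{E}_n$ and then applying $A_\chi^{t_u}$ turns the ``$\mathcal{E}_{n+m}$ given $\mathcal{E}_n$'' increment into the ``$\mathcal{E}_m$ given $\mathcal{E}_0$'' increment for the rescaled copy $A_\chi^{t_u}\varphi_u\mu$, whose $J$-projection is (a $O(1)$-perturbation of) a linear image of $\pi_J\mu$ rescaled by a bounded factor. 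So the whole thing reduces, up to $O(1/m)$ errors, to $\tfrac1m H(\pi_J\mu,\mathcal{E}_m^{J}\mid\mathcal{E}_0^{J})$, which in turn (since $\mathcal{E}_m^J$ built from $\{\chi_j\}_{j\in J}$ is $O(1)$-commensurable with a suitable anisotropic dyadic refinement) is $\ge\sum_{j\in J}\chi_j\cdot(\tfrac1m H(\pi_J\mu,\mathcal{D}^{|J|}_{m}\mid\mathcal{D}^{|J|}_{0})/|J|)\ge\sum_{j\in J}\chi_j-\epsilon'$ once $m$ is large; here one uses that along a subdivision in which each $j\in J$ coordinate is refined to dyadic scale $\chi_j m$, near-full dimension of $\pi_J\mu$ gives near-maximal entropy, and the full weight $\sum_{j\in J}\chi_j$ is the normalising constant relating $H(\cdot,\mathcal{E}_m^J)$ to $m$ times dimension.

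The main obstacle I anticipate is making the ``rescale and reduce to scale $m$'' step fully rigorous: the words $u\in\mathcal{U}_n$ have $t_u\in[n,n+O(1)]$, not exactly $n$, and after rescaling by $A_\chi^{t_u}$ the measure $A_\chi^{t_u}\varphi_u\mu$ is a bounded affine image of $\mu$ but not $\mu$ itself, so I need the uniform-in-the-measure version of ``full-dimensional projection $\Rightarrow$ near-maximal conditional entropy at every scale'' — this is where the hypothesis that $\pi_J\mu$ is genuinely a self-affine measure (for the IFS $\Phi_J$, satisfying the same standing assumptions) gets used, because a single self-affine measure's exact-dimensionality of full dimension propagates to all its bounded affine images with uniform constants. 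A secondary nuisance is the interplay of the conditioning partition $\mathcal{E}_n$ on $\mathbb{R}^d$ (which refines all $d$ coordinates) versus $\pi_J^{-1}\mathcal{E}_{n+m}$ (which only refines the $J$-coordinates, from scale $\chi_j n$ up to $\chi_j(n+m)$); the conditional entropy formula \eqref{eq:extended cond ent form} handles this cleanly, reducing $H(\mu,\pi_J^{-1}\mathcal{E}_{n+m}\mid\mathcal{E}_n)$ to $H(\mu,\pi_J^{-1}\mathcal{E}_{n+m}\mid \pi_J^{-1}\mathcal{E}_n\vee \mathcal{E}_n)=H(\mu,\pi_J^{-1}\mathcal{E}_{n+m}\mid \pi_J^{-1}\mathcal{E}_n)$ up to $O(1)$ because conditioning on the extra (non-$J$) coordinates of $\mathcal{E}_n$ only helps, and the reverse inequality $H(\mu,\pi_J^{-1}\mathcal{E}_{n+m}\mid\mathcal{E}_n)\le H(\mu,\pi_J^{-1}\mathcal{E}_{n+m}\mid\pi_J^{-1}\mathcal{E}_n)+0$ is automatic since $\mathcal{E}_n$ refines $\pi_J^{-1}\mathcal{E}_n$ — wait, that gives the wrong direction, so in fact I only need the lower bound and conditioning on \emph{more} ($\mathcal{E}_n\supset\pi_J^{-1}\mathcal{E}_n$) can only decrease entropy, meaning I must keep the full $\mathcal{E}_n$ and the self-affine decomposition is essential precisely to absorb this extra conditioning — the $\varphi_u\mu$ components each live in a single cell of $\mathcal{E}_n$, so after rescaling the conditioning on $\mathcal{E}_n$ becomes conditioning on $\mathcal{E}_0$, which is trivial, and no information is lost. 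This is the crux and I would spell it out carefully.
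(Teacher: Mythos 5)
Your outline matches the paper's overall architecture: use the self-affine decomposition $\mu=\sum_{u\in\mathcal{U}_n}p_u\varphi_u\mu$, concavity of conditional entropy, the commensurability relations, and the fact that $t_u\in[n,n+O(1)]$, to show
\[
\frac{1}{m}H\left(\mu,\pi_J^{-1}\mathcal{E}_{n+m}\mid\mathcal{E}_n\right)\ge\frac{1}{m}H\left(\mu,\pi_J^{-1}\mathcal{E}_m\right)-O(1/m),
\]
and your closing observation that the decomposition is needed precisely to absorb the conditioning on the full $\mathcal{E}_n$ (rather than just $\pi_J^{-1}\mathcal{E}_n$) is correct and is what the paper does. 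The detours through uniform conditional-entropy estimates for $\mathcal{D}^{|J|}$ are not needed and are not in the paper, but they are not the problem.

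The problem is the final step. You need to show
\[
\liminf_{m\to\infty}\frac{1}{m}H\left(\pi_J\mu,\mathcal{E}_m\right)\ge\sum_{j\in J}\chi_j,
\]
and the inequality you propose for this,
\[
\frac{1}{m}H\left(\pi_J\mu,\mathcal{E}_m^J\mid\mathcal{E}_0^J\right)\;\ge\;\frac{\sum_{j\in J}\chi_j}{|J|}\cdot\frac{1}{m}H\left(\pi_J\mu,\mathcal{D}^{|J|}_m\mid\mathcal{D}^{|J|}_0\right),
\]
is not a valid inequality for a general measure (e.g.\ it fails for a product measure $\nu_1\times\nu_2$ with $\dim\nu_1>\dim\nu_2$ and $\chi_1<\chi_2$), and in the exact-dimensional, full-dimension case it only becomes an asymptotic \emph{equality} as $m\to\infty$ --- i.e.\ it is circular, since the left-hand limit is exactly the thing to be proved. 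The informal gloss (``near-full dimension gives near-maximal entropy along the anisotropic subdivision'') is true but is not established by this formula. What the paper does instead is apply the conditional entropy identity \eqref{eq:cond ent form} with the \emph{finest conformal} dyadic partition $\mathcal{D}_{\chi_d k}$:
\[
\frac{1}{k}H\left(\pi_J\mu,\mathcal{E}_k\right)=\frac{1}{k}H\left(\pi_J\mu,\mathcal{D}_{\chi_d k}\right)-\frac{1}{k}H\left(\pi_J\mu,\mathcal{D}_{\chi_d k}\mid\mathcal{E}_k\right),
\]
where the first term tends to $|J|\chi_d$ by exact dimensionality with $\dim\pi_J\mu=|J|$, and the second term is at most $\sum_{j\in J}\left(\chi_d-\chi_j\right)+O(1/k)$ by simply counting how many cells of $\pi_J^{-1}\mathcal{D}_{\chi_d k}$ meet a cell of $\pi_J^{-1}\mathcal{E}_k$ (namely $\prod_{j\in J}2^{\lfloor\chi_d k\rfloor-\lfloor\chi_j k\rfloor}$). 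Subtracting gives the required $\liminf\ge\sum_{j\in J}\chi_j$. You should replace your interpolation step with this counting argument; once that is done, the rest of your plan is sound.
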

\begin{proof}
Let us first show that,
\begin{equation}
\underset{k\rightarrow\infty}{\liminf}\frac{1}{k}H\left(\pi_{J}\mu,\mathcal{E}_{k}\right)\ge\sum_{j\in J}\chi_{j}.\label{eq:liminf >=00003D sum of exp}
\end{equation}
By the conditional entropy formula it follows that for each $k\ge1$,
\[
\frac{1}{k}H\left(\pi_{J}\mu,\mathcal{E}_{k}\right)=\frac{1}{k}H\left(\pi_{J}\mu,\mathcal{D}_{\chi_{d}k}\right)-\frac{1}{k}H\left(\pi_{J}\mu,\mathcal{D}_{\chi_{d}k}\mid\mathcal{E}_{k}\right).
\]
Since $\dim\pi_{J}\mu=|J|$,
\[
\underset{k\rightarrow\infty}{\lim}\frac{1}{k}H\left(\pi_{J}\mu,\mathcal{D}_{\chi_{d}k}\right)=|J|\chi_{d}.
\]
For $k\ge1$ and $E\in\pi_{J}^{-1}\mathcal{E}_{k}$
\[
\log\#\left\{ D\in\pi_{J}^{-1}\mathcal{D}_{\chi_{d}k}\::\:D\cap E\ne\emptyset\right\} =\sum_{j\in J}\left(\left\lfloor \chi_{d}k\right\rfloor -\left\lfloor \chi_{j}k\right\rfloor \right),
\]
which implies
\[
\frac{1}{k}H\left(\pi_{J}\mu,\mathcal{D}_{\chi_{d}k}\mid\mathcal{E}_{k}\right)\le\sum_{j\in J}\left(\chi_{d}-\chi_{j}\right)+O(1/k).
\]
By combining all of this we obtain (\ref{eq:liminf >=00003D sum of exp}).

Next, let $n,m\ge1$ be given. From $\mu=\sum_{u\in\mathcal{U}_{n}}p_{u}\cdot\varphi_{u}\mu$,
by the concavity of conditional entropy, and from (\ref{eq:lin part =00003D A^-t})
and (\ref{eq:comens part after trans}),
\[
\frac{1}{m}H\left(\mu,\pi_{J}^{-1}\mathcal{E}_{n+m}\mid\mathcal{E}_{n}\right)\ge\sum_{u\in\mathcal{U}_{n}}p_{u}\cdot\frac{1}{m}H\left(A_{\chi}^{-t_{u}}\mu,\pi_{J}^{-1}\mathcal{E}_{n+m}\mid\mathcal{E}_{n}\right)-O(1/m).
\]
Thus, by (\ref{eq:comens part after scaling}) and (\ref{eq:t_n in =00005Bn,n+C=00005D}),
\[
\frac{1}{m}H\left(\mu,\pi_{J}^{-1}\mathcal{E}_{n+m}\mid\mathcal{E}_{n}\right)\ge\frac{1}{m}H\left(\mu,\pi_{J}^{-1}\mathcal{E}_{m}\right)-O(1/m).
\]
This together with (\ref{eq:liminf >=00003D sum of exp}) completes
the proof of the lemma.
\end{proof}
Finally, we prove Lemma \ref{lem:lb on ent of comp}, which is the
following statement.
\begin{lem*}
Suppose that $\dim\pi_{[d-1]}\mu=d-1$. Then for all $\epsilon>0$,
$m\ge M(\epsilon)\ge1$ and $n\ge1$,
\[
\frac{1}{m}H\left(\mu,\mathcal{E}_{n+m}\mid\mathcal{E}_{n}\right)\ge\kappa-\epsilon.
\]
\end{lem*}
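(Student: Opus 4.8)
The plan is to mirror the argument used for Lemma \ref{lem:lb on proj of comp} and then feed the output into Lemma \ref{lem:lim of ent wrt non-conf part}. The heart of the matter is the ``superadditivity up to $O(1)$'' estimate
\[
H\left(\mu,\mathcal{E}_{n+m}\mid\mathcal{E}_{n}\right)\ \ge\ H\left(\mu,\mathcal{E}_{m}\right)-O(1),
\]
with the implied constant independent of $n$ and $m$. Granting this, the lemma follows at once: by Lemma \ref{lem:lim of ent wrt non-conf part} (which is where the hypothesis $\dim\pi_{[d-1]}\mu=d-1$ is used) we have $\frac{1}{m}H(\mu,\mathcal{E}_{m})\to\kappa$, so choosing $M(\epsilon)$ large enough that $m\ge M(\epsilon)$ forces both $\frac{1}{m}H(\mu,\mathcal{E}_{m})\ge\kappa-\epsilon/2$ and the error term $O(1/m)\le\epsilon/2$ yields $\frac{1}{m}H(\mu,\mathcal{E}_{n+m}\mid\mathcal{E}_{n})\ge\kappa-\epsilon$ for all $n\ge1$.

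To prove the displayed estimate, I would start from the stopping-time decomposition $\mu=\sum_{u\in\mathcal{U}_{n}}p_{u}\cdot\varphi_{u}\mu$. By concavity of conditional entropy (\ref{eq:conc =000026 almo conv of ent}), together with the $O(1)$-invariance of the relevant partitions under translation (\ref{eq:comens part after trans}) and the fact that the linear part of $\varphi_{u}$ equals $A_{\chi}^{-t_{u}}$ up to a coordinate reflection (\ref{eq:lin part =00003D A^-t}), one reduces to a lower bound for $\frac{1}{m}H(A_{\chi}^{-t_{u}}\mu,\mathcal{E}_{n+m}\mid\mathcal{E}_{n})$, uniformly in $u\in\mathcal{U}_{n}$. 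Applying the scaling commensurability (\ref{eq:comens part after scaling}) and using $n\le t_{u}\le n+O(1)$ from (\ref{eq:t_n in =00005Bn,n+C=00005D}), this quantity equals $\frac{1}{m}H(\mu,\mathcal{E}_{n+m-t_{u}}\mid\mathcal{E}_{n-t_{u}})$ up to an additive $O(1/m)$, where now $n-t_{u}=O(1)$ and $n+m-t_{u}=m+O(1)$.

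The final step is to replace these slightly-shifted partitions by $\mathcal{E}_{m}$ and the (essentially trivial, for a compactly supported measure) partition at level $0$. Since $\chi_{j}>0$ and $m\ge1$, the partition $\mathcal{E}_{n+m-t_{u}}$ refines $\mathcal{E}_{n-t_{u}}$, so $H(\mu,\mathcal{E}_{n+m-t_{u}}\mid\mathcal{E}_{n-t_{u}})=H(\mu,\mathcal{E}_{n+m-t_{u}})-H(\mu,\mathcal{E}_{n-t_{u}})$ by (\ref{eq:cond ent form}). Because $|n-t_{u}|=O(1)$, the partition $\mathcal{E}_{n-t_{u}}$ is coarser than $\mathcal{D}_{0}^{d}$ up to $O(1)$ levels, and since $\mu$ is compactly supported (\ref{eq:card ub for ent}) gives $H(\mu,\mathcal{E}_{n-t_{u}})=O(1)$; similarly $\mathcal{E}_{n+m-t_{u}}$ is $O(1)$-commensurable with $\mathcal{E}_{m}$, so $H(\mu,\mathcal{E}_{n+m-t_{u}})=H(\mu,\mathcal{E}_{m})+O(1)$. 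Combining and summing over $u\in\mathcal{U}_{n}$ with $\sum_{u\in\mathcal{U}_{n}}p_{u}=1$ gives $\frac{1}{m}H(\mu,\mathcal{E}_{n+m}\mid\mathcal{E}_{n})\ge\frac{1}{m}H(\mu,\mathcal{E}_{m})-O(1/m)$, which is the required estimate. I do not expect a genuine obstacle here; the only care needed is bookkeeping: checking that every commensurability constant depends solely on $\chi_{1},\dots,\chi_{d}$ and on the uniform bound for $t_{u}-n$ over $u\in\mathcal{U}_{n}$ and $n$, and that the coordinate reflections implicit in the signs of the $r_{u,j}$ distort the dyadic partitions by only $O(1)$.
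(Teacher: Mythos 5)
Your proof is correct and takes essentially the same approach as the paper: the paper's own proof is simply the remark ``As in the last proof, $\frac{1}{m}H(\mu,\mathcal{E}_{n+m}\mid\mathcal{E}_{n})\ge\frac{1}{m}H(\mu,\mathcal{E}_{m})-O(1/m)$; combine with Lemma~\ref{lem:lim of ent wrt non-conf part}'', where ``the last proof'' is that of Lemma~\ref{lem:lb on proj of comp} and the displayed inequality is exactly your superadditivity-up-to-$O(1)$ estimate, obtained via the stopping-time decomposition $\mu=\sum_{u\in\mathcal{U}_n}p_u\,\varphi_u\mu$, concavity, and the commensurability relations. Your expanded bookkeeping (the compact-support bound $H(\mu,\mathcal{E}_{n-t_u})=O(1)$, the uniformity of the commensurability constants in $u$, and the $O(1)$ effect of the coordinate reflections) faithfully fills in exactly what the paper leaves implicit.
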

\begin{proof}
Let $n,m\ge1$ be given. As in the last proof,
\[
\frac{1}{m}H\left(\mu,\mathcal{E}_{n+m}\mid\mathcal{E}_{n}\right)\ge\frac{1}{m}H\left(\mu,\mathcal{E}_{m}\right)-O(1/m).
\]
This together with Lemma \ref{lem:lim of ent wrt non-conf part} completes
the proof.
\end{proof}

\section{\label{sec:Proof-of-ent-unc-res}Proof of the entropy increase result}

The purpose of this section is to prove Theorem \ref{thm:ent inc result}.

\subsection{The Kaimanovich--Vershik lemma}

We shall need the following useful lemma, which is due to Kaimanovich
and Vershik \cite{kaimanovich_and_vershik} in some form. See \cite[Lemma 4.7]{Ho1}
for a proof in the formulation given here. In what follows, we write
$H(\theta)$ for the Shannon entropy of a discrete probability measure
$\theta$.
\begin{lem}
\label{lem:Kaimanovich=002013Vershik}Let $\Gamma$ be a countable
abelian group, and let $\theta$ and $\sigma$ be probability measures
on $\Gamma$ with $H(\theta),H(\sigma)<\infty$. Then for every $k\ge1$,
\[
H\left(\theta^{*k}*\sigma\right)-H(\sigma)\le k\left(H(\theta*\sigma)-H(\sigma)\right).
\]
\end{lem}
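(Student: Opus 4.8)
The plan is to prove the lemma by an information-theoretic telescoping argument: I will show that the successive increments $k\mapsto H(\theta^{*k}*\sigma)$ have non-increasing differences, each bounded above by $H(\theta*\sigma)-H(\sigma)$, and then sum. First I would fix a probability space carrying independent $\Gamma$-valued random variables $X_1,X_2,\dots$ each with law $\theta$, and $Y$ with law $\sigma$, and set $S_k:=X_1+\dots+X_k$ (with $S_0:=0$), so that $\theta^{*k}*\sigma$ is the law of $S_k+Y$. Note at the outset that every entropy appearing below is finite: $H(\theta^{*k}*\sigma)=H(S_k+Y)\le H(S_k)+H(Y)\le kH(\theta)+H(\sigma)<\infty$, so all the Shannon mutual informations $I(A;B):=H(A)-H(A\mid B)$ are well defined and the chain rule holds.

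The key observation is that conditioning on the most recently added summand changes the entropy only by a translation. Given $X_{k+1}=x$, the conditional law of $S_{k+1}+Y=S_k+Y+x$ is the translate by $x$ of the law of $S_k+Y$, so $H(S_{k+1}+Y\mid X_{k+1})=H(S_k+Y)$, and therefore $H(S_{k+1}+Y)-H(S_k+Y)=I(X_{k+1};S_{k+1}+Y)$. Running the same computation with $k=0$, and using that $X_{k+1}$ and $X_1$ are equidistributed, gives $H(\theta*\sigma)-H(\sigma)=H(X_1+Y)-H(Y)=I(X_{k+1};X_{k+1}+Y)$. Thus the lemma will follow once we establish $I(X_{k+1};S_{k+1}+Y)\le I(X_{k+1};X_{k+1}+Y)$ for every $k\ge0$ and telescope: summing the resulting bound $H(S_{j+1}+Y)-H(S_j+Y)\le H(\theta*\sigma)-H(\sigma)$ over $j=0,\dots,k-1$ yields
\[
H(\theta^{*k}*\sigma)-H(\sigma)=H(S_k+Y)-H(Y)\le k\bigl(H(\theta*\sigma)-H(\sigma)\bigr).
\]

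The inequality $I(X_{k+1};S_{k+1}+Y)\le I(X_{k+1};X_{k+1}+Y)$ is an instance of the data-processing inequality, applied to the chain $X_{k+1}\to X_{k+1}+Y\to (X_{k+1}+Y)+S_k$. What needs checking is the Markov property, i.e.\ that $X_{k+1}$ and $S_k$ are conditionally independent given $W:=X_{k+1}+Y$ (note $S_{k+1}+Y=W+S_k$ is then a deterministic function of $(W,S_k)$). This holds because $S_k$ is independent of the pair $(X_{k+1},Y)$, hence of $(X_{k+1},W)$; consequently $\mathbb P(X_{k+1}=a,S_k=s\mid W=z)=\mathbb P(X_{k+1}=a\mid W=z)\,\mathbb P(S_k=s)$, which is exactly the required conditional independence. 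I expect the only genuinely delicate point to be the bookkeeping needed to invoke the chain rule and the data-processing inequality over a countably infinite alphabet — but this is harmless here precisely because the a priori bound $H(\theta^{*k}*\sigma)\le kH(\theta)+H(\sigma)<\infty$ keeps every entropy and mutual information finite, so the finite-alphabet manipulations carry over verbatim (this is essentially the argument in \cite[Lemma 4.7]{Ho1}).
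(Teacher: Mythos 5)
Your proof is correct. Note that the paper does not actually give a proof of this lemma: it simply cites \cite[Lemma 4.7]{Ho1}, so there is no in-paper argument to compare against. That said, your mutual-information/data-processing argument is exactly the standard proof of the Kaimanovich--Vershik inequality, and it is essentially the one found in Hochman's paper. All the steps check out: the translation-invariance observation $H(S_{k+1}+Y\mid X_{k+1})=H(S_k+Y)$ uses the independence of $X_{k+1}$ from $(S_k,Y)$ together with commutativity of $\Gamma$ (so that conditioning on $X_{k+1}=x$ merely translates the conditional law of $S_{k+1}+Y$); the identity $H(S_{k+1}+Y)-H(S_k+Y)=I(X_{k+1};S_{k+1}+Y)$ then follows; the Markov chain $X_{k+1}\to X_{k+1}+Y\to S_{k+1}+Y$ is correctly verified via independence of $S_k$ from $(X_{k+1},Y)$; and the a priori bound $H(\theta^{*k}*\sigma)\le kH(\theta)+H(\sigma)<\infty$ legitimately ensures that every entropy and mutual information used in the chain rule and data-processing inequality is finite, so the countable alphabet causes no trouble. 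Telescoping then gives the claim. One stylistic remark: rather than bounding each increment directly by the first, one could equivalently show that the increments $H(\theta^{*(j+1)}*\sigma)-H(\theta^{*j}*\sigma)$ are non-increasing in $j$ and sum, but your direct bound via the data-processing inequality on the chain starting from $X_{j+1}+Y$ is cleaner and requires no monotonicity claim.
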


\begin{cor}
\label{cor:from Kaimanovich=002013Vershik}Let $\theta,\sigma\in\mathcal{M}_{\mathrm{c}}(\mathbb{R}^{d})$
and $n\ge1$ be given. Then for each $k\ge1$,
\[
H\left(\theta^{*k}*\sigma,\mathcal{E}_{n}\right)-H(\sigma,\mathcal{E}_{n})\le k\left(H(\theta*\sigma,\mathcal{E}_{n})-H(\sigma,\mathcal{E}_{n})\right)+O(k).
\]
\end{cor}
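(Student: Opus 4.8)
The plan is to discretize onto the lattice underlying $\mathcal{E}_n$, so that the statement reduces to the abelian-group inequality of Lemma \ref{lem:Kaimanovich=002013Vershik}. Fix $n\ge1$, put $\ell_j:=2^{-\lfloor\chi_j n\rfloor}$ for $1\le j\le d$, and set $\Gamma:=\prod_{j=1}^d\ell_j\mathbb{Z}$, so that the cells of $\mathcal{E}_n$ are precisely the translates by $\Gamma$ of the box $\prod_{j=1}^d[0,\ell_j)$. Let $q\colon\mathbb{R}^d\to\Gamma$ send $x$ to the lower corner of $\mathcal{E}_n(x)$, i.e. $q(x)_j=\ell_j\lfloor x_j/\ell_j\rfloor$, and for $\eta\in\mathcal{M}_{\mathrm{c}}(\mathbb{R}^d)$ write $\bar\eta:=q\eta$. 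Since the fibres of $q$ are exactly the atoms of $\mathcal{E}_n$, we have $H(\bar\eta)=H(\eta,\mathcal{E}_n)$, and $\bar\eta$ is finitely supported, hence has finite entropy; thus Lemma \ref{lem:Kaimanovich=002013Vershik} will be applicable to the countable abelian group $\Gamma$ together with $\bar\theta$ and $\bar\sigma$.

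The first step is to observe that discretization almost commutes with convolution. If $X_1,\dots,X_k$ are i.i.d.\ $\theta$ and $Y\sim\sigma$ is independent of them, then writing $X_i=q(X_i)+\epsilon_i$ and $Y=q(Y)+\epsilon$ with every coordinate of the remainders in the corresponding $[0,\ell_j)$, a one-line computation gives
\[
q(X_1+\dots+X_k+Y)=q(X_1)+\dots+q(X_k)+q(Y)+Z,
\]
where the $j$-th coordinate of the $\Gamma$-valued error $Z$ lies in $\{0,\ell_j,\dots,k\ell_j\}$, so that $Z$ takes at most $(k+1)^d$ values. Using $H(W+Z)\le H(Z)+H(W\mid Z)\le H(Z)+H(W)$ for the two relevant pairs $(W,Z)$ (namely the $k$-fold case just described and the case $k=1$ applied in reverse), this yields
\[
H\bigl(\overline{\theta^{*k}*\sigma}\bigr)\le H\bigl(\bar\theta^{*k}*\bar\sigma\bigr)+d\log(k+1),\qquad H\bigl(\bar\theta*\bar\sigma\bigr)\le H\bigl(\overline{\theta*\sigma}\bigr)+d,
\]
the convolutions of barred measures being taken in $\Gamma$.

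It then remains to combine these two inequalities with Lemma \ref{lem:Kaimanovich=002013Vershik}, which gives $H(\bar\theta^{*k}*\bar\sigma)-H(\bar\sigma)\le k\bigl(H(\bar\theta*\bar\sigma)-H(\bar\sigma)\bigr)$, and with the identities $H(\bar\sigma)=H(\sigma,\mathcal{E}_n)$, $H(\overline{\theta^{*k}*\sigma})=H(\theta^{*k}*\sigma,\mathcal{E}_n)$ and $H(\overline{\theta*\sigma})=H(\theta*\sigma,\mathcal{E}_n)$; chaining the inequalities produces
\[
H(\theta^{*k}*\sigma,\mathcal{E}_n)-H(\sigma,\mathcal{E}_n)\le k\bigl(H(\theta*\sigma,\mathcal{E}_n)-H(\sigma,\mathcal{E}_n)\bigr)+kd+d\log(k+1),
\]
which is the claim, with an $O(k)$-error depending only on $d$. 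The only mildly delicate point is the bound on the number of values of $Z$ and the ensuing entropy comparison between $\overline{\theta^{*k}*\sigma}$ and $\bar\theta^{*k}*\bar\sigma$; once that is in place, the corollary is immediate from Lemma \ref{lem:Kaimanovich=002013Vershik}.
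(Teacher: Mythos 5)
Your proof is correct and follows essentially the same route as the paper's: both discretize $\mathbb{R}^d$ onto the lattice of lower-left corners of the cells of $\mathcal{E}_n$, reduce the statement to Lemma~\ref{lem:Kaimanovich=002013Vershik} applied to that lattice group, and then control the error incurred in passing between the continuous convolutions and their lattice counterparts with an $O(\log(k+1))$ term. The only technical difference is cosmetic: the paper bounds the discretization error by observing that the two relevant preimage partitions are $O((k+1)^d)$-commensurable (its equation~(\ref{eq:comens part under inv img})), yielding the two-sided estimate~(\ref{eq:conv close to conv of desc}), whereas you bound it by entropy subadditivity for a sum $W+Z$ where the carry term $Z$ takes at most $(k+1)^d$ values, using each one-sided inequality exactly where needed. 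Both yield the same $O(k)$ error and are equally valid.
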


\begin{proof}
The proof is similar to the proof of \cite[Proposition 4.9]{Ho1},
but we include it here for completeness. For $1\le j\le d$ set $m_{j}:=\left\lfloor \chi_{j}n\right\rfloor $
and write
\[
\Gamma:=\left\{ \left(\frac{l_{1}}{2^{m_{1}}},...,\frac{l_{d}}{2^{m_{d}}}\right)\::\:l_{1},...,l_{d}\in\mathbb{Z}\right\} ,
\]
so that $\Gamma$ is a discrete subgroup of $\mathbb{R}^{d}$. Note
that each $E\in\mathcal{E}_{n}$ contains exactly one element from
$\Gamma$. Let $\xi:\mathbb{R}^{d}\rightarrow\Gamma$ be with $\xi(y)=x$
for $x\in\Gamma$ and $y\in\mathcal{E}_{n}(x)$.

Given $k\ge0$ let $f_{k},g_{k}:\left(\mathbb{R}^{d}\right)^{k+1}\rightarrow\mathbb{R}^{d}$
be such that for $x_{0},...,x_{k}\in\mathbb{R}^{d}$,
\[
f_{k}(x_{0},...,x_{k})=x_{0}+...+x_{k}\text{ and }g_{k}(x_{0},...,x_{k})=\xi(x_{0})+...+\xi(x_{k}).
\]
Note that
\begin{equation}
H\left(f_{k}\left(\theta^{\times k}\times\sigma\right),\mathcal{E}_{n}\right)=H\left(\theta^{*k}*\sigma,\mathcal{E}_{n}\right),\label{eq:ent of im f_k}
\end{equation}
and
\begin{equation}
H\left(g_{k}\left(\theta^{\times k}\times\sigma\right),\mathcal{E}_{n}\right)=H\left((\xi\theta)^{*k}*\xi\sigma\right).\label{eq:ent of im g_k}
\end{equation}

For $x_{0},...,x_{k}\in\mathbb{R}^{d}$ and $1\le j\le d$,
\[
\left|\pi_{j}\left(f_{k}(x_{0},...,x_{k})-g_{k}(x_{0},...,x_{k})\right)\right|\le O\left((k+1)2^{-\chi_{j}n}\right).
\]
Thus by (\ref{eq:ent of im f_k}), (\ref{eq:ent of im g_k}) and (\ref{eq:comens part under inv img}),
\begin{equation}
H\left(\theta^{*k}*\sigma,\mathcal{E}_{n}\right)=H\left((\xi\theta)^{*k}*\xi\sigma\right)+O\left(1+\log(k+1)\right)\text{ for }k\ge0.\label{eq:conv close to conv of desc}
\end{equation}
Moreover, by Lemma \ref{lem:Kaimanovich=002013Vershik},
\[
H\left((\xi\theta)^{*k}*\xi\sigma\right)-H(\xi\sigma)\le k\left(H(\xi\theta*\xi\sigma)-H(\xi\sigma)\right)\text{ for }k\ge1.
\]
This together with (\ref{eq:conv close to conv of desc}) completes
the proof of the corollary.
\end{proof}

\subsection{Proof of the theorem}

Let us recall the statement of Theorem \ref{thm:ent inc result}.
\begin{thm*}
Suppose that $\dim\mu<d$ and that $\dim\pi_{J}\mu=|J|$ for each
proper subset $J$ of $[d]$. Then for every $0<\epsilon<1$ there
exists $\delta=\delta(\epsilon)>0$ so that the following holds. Let
$n\ge N(\epsilon)\ge1$ and $\theta\in\mathcal{M}_{\mathrm{c}}(\mathbb{R}^{d})$
be with $\mathrm{diam}(\mathrm{supp}(\theta))\le\epsilon^{-1}$ and
$\frac{1}{n}H(\theta,\mathcal{E}_{n})>\epsilon$. Then $\frac{1}{n}H(\theta*\mu,\mathcal{E}_{n})>\kappa+\delta$.
\end{thm*}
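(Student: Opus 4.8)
The plan is to follow the roadmap sketched in Section~\ref{subsec:About-the-proof}, combining Proposition~\ref{prop:full ent of slices} with Lemmata~\ref{lem:lb on ent of comp} and~\ref{lem:lb on proj of comp}. Fix $0<\epsilon<1$ and let $\theta$, $n$ be as in the statement. First I would produce scales $\epsilon^{-1}\ll m\ll k\ll n$ on which $\theta^{*k}*\mu$ has nearly full entropy increment across a positive proportion of levels. To do this I apply Proposition~\ref{prop:full ent of slices} with a nested hierarchy of parameters $m_1,\dots,m_d,k_1,\dots,k_d$ (obeying $\epsilon^{-1}\ll m_d$, $m_j\ll k_j$, $k_{j+1}\ll m_j$, $k_1\ll n$): it yields some $1\le j_0\le d$ and a set $\mathrm{Q}_{j_0}\subset[n]$ with $\lambda_n(\mathrm{Q}_{j_0})>\delta_1$ on which the $e_{j_0}$-directional conditional entropy of $\theta^{*k_{j_0}}$ along tubes is $>\chi_{j_0}-\epsil'$.

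The next step is to convolve with $\mu$ and use Lemma~\ref{lem:lb on proj of comp} together with the regularity hypothesis $\dim\pi_J\mu=|J|$ for all proper $J\subset[d]$, applied with $J=[d]\setminus\{j_0\}$. Heuristically: at a scale $q\in\mathrm{Q}_{j_0}$, a typical $q$-component of $\theta^{*k_{j_0}}$ already has almost full entropy in the $e_{j_0}$-fibre of a tube; convolving with (a suitable component of) $\mu$, whose $\pi_{[d]\setminus\{j_0\}}$-projection fills $d-1$ dimensions by Lemma~\ref{lem:lb on proj of comp}, one recovers almost full entropy $\sum_{j=1}^d\chi_j$ in the $\mathcal{E}_{q+m}$-over-$\mathcal{E}_q$ increment of $\theta^{*k_{j_0}}*\mu$, for a positive proportion of scales $q\le n$. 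Write $k:=k_{j_0}$, $m:=m_{j_0}$ and set $h_i:=\frac{1}{m}H(\theta^{*k}*\mu,\mathcal{E}_{i+m}\mid\mathcal{E}_i)$. The above gives $h_i\ge\sum_{j=1}^d\chi_j-\epsilon'$ for $i$ in a set of $\lambda_n$-measure $>\delta_2$, while Lemma~\ref{lem:lb on ent of comp} gives $h_i\ge\kappa-\epsilon'$ for \emph{all} $1\le i\le n$ (using $\dim\pi_{[d-1]}\mu=d-1$). Since $\dim\mu<d$ forces $\kappa<\sum_{j=1}^d\chi_j$, averaging via Lemma~\ref{lem:ent=00003Davg of loc ent} (telescoping $\frac{1}{n}H(\theta^{*k}*\mu,\mathcal{E}_n)$ over the increments $h_i$, up to an $O((m+\log R)/n)$ error with $R=\epsilon^{-1}+\mathrm{diam}(\mathrm{supp}\,\mu)$) yields $\frac{1}{n}H(\theta^{*k}*\mu,\mathcal{E}_n)>\kappa+\delta_0$ for some $\delta_0$ with $\epsilon^{-1}\ll\delta_0^{-1}\ll m$.

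Finally I transfer this entropy gain from $\theta^{*k}*\mu$ back to $\theta*\mu$ using Corollary~\ref{cor:from Kaimanovich=002013Vershik} with $\sigma=\mu$: since
\[
H(\theta^{*k}*\mu,\mathcal{E}_n)-H(\mu,\mathcal{E}_n)\le k\bigl(H(\theta*\mu,\mathcal{E}_n)-H(\mu,\mathcal{E}_n)\bigr)+O(k),
\]
and since $\frac{1}{n}H(\mu,\mathcal{E}_n)\to\kappa$ by Lemma~\ref{lem:lim of ent wrt non-conf part} (so $\frac1n H(\mu,\mathcal{E}_n)=\kappa+o(1)$), rearranging gives
\[
\tfrac1n H(\theta*\mu,\mathcal{E}_n)\ge\tfrac1n H(\mu,\mathcal{E}_n)+\tfrac1k\bigl(\tfrac1n H(\theta^{*k}*\mu,\mathcal{E}_n)-\tfrac1n H(\mu,\mathcal{E}_n)\bigr)-O(1/n)\ge\kappa+\tfrac{\delta_0}{2k}
\]
for $n$ large, which is the claim with $\delta:=\delta_0/(2k)$.

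The main obstacle is the second step: carefully decomposing both $\theta^{*k_{j_0}}$ and $\mu$ into components at the right scales so that Proposition~\ref{prop:full ent of slices}'s ``full entropy along $e_{j_0}$-tubes'' statement genuinely combines with Lemma~\ref{lem:lb on proj of comp}'s ``full entropy in the complementary $\pi_{[d]\setminus\{j_0\}}$-direction'' to produce full \emph{joint} entropy across a positive-density set of scales. This requires matching the partition parameters $m$, $k$, $q$ of the two inputs consistently, controlling the error terms from $O(1)$-commensurability of the relevant non-conformal partitions under translation and scaling (equations~(\ref{eq:comens part after trans}), (\ref{eq:comens part after scaling}), (\ref{eq:comens part under inv img})), and arguing that the convolution structure $\theta^{*k}*\mu=\int(\theta^{*k})_{x,q}*\mu_{y,q'}$ lets the two entropy contributions add along disjoint coordinate blocks. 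Everything else is bookkeeping with the entropy inequalities from Section~\ref{sec:Preliminaries}.
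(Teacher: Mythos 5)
Your plan coincides with the paper's, and you have correctly identified every ingredient: Proposition~\ref{prop:full ent of slices} to locate a direction $j$ and a positive-density set $\mathrm{Q}_j\subset[n]$, Lemma~\ref{lem:lb on proj of comp} with $J=[d]\setminus\{j\}$ and Lemma~\ref{lem:lb on ent of comp} for the conditional-entropy lower bounds, Lemma~\ref{lem:ent=00003Davg of loc ent} to average over scales, Lemma~\ref{lem:lim of ent wrt non-conf part} to identify $\frac1n H(\mu,\mathcal{E}_n)$ with $\kappa$ up to $o(1)$, and Corollary~\ref{cor:from Kaimanovich=002013Vershik} to transfer the entropy gain from $\theta^{*k}*\mu$ back to $\theta*\mu$ at a $1/k$ loss. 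However, the step you flag as ``the main obstacle'' is indeed a genuine gap in your write-up, and the resolution you gesture at --- decomposing both $\theta^{*k_{j}}$ and $\mu$ into components at matched scales $q,q'$ so that contributions ``add along disjoint coordinate blocks'' --- is more complicated than what is needed and would be awkward to push through.

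The paper's resolution is a short chain-rule argument. For $\sigma:=\theta^{*k_j}*\mu$, apply the conditional entropy formula~(\ref{eq:extended cond ent form}) with $\mathcal{D}=\mathcal{E}_{i+m_j}$, $\mathcal{E}=\pi_{[d]\setminus\{j\}}^{-1}\mathcal{E}_{i+m_j}$, $\mathcal{Q}=\mathcal{E}_i$, which gives
\[
H\bigl(\sigma,\mathcal{E}_{i+m_j}\mid\mathcal{E}_i\bigr)=H\bigl(\sigma,\pi_{[d]\setminus\{j\}}^{-1}\mathcal{E}_{i+m_j}\mid\mathcal{E}_i\bigr)+H\bigl(\sigma,\mathcal{E}_{i+m_j}\mid\mathcal{E}_i\vee\pi_{[d]\setminus\{j\}}^{-1}\mathcal{E}_{i+m_j}\bigr).
\]
For the first summand, write $\sigma=\int T_y\mu\:d\theta^{*k_j}(y)$; by concavity of conditional entropy in the measure argument, together with the translation commensurability~(\ref{eq:comens part after trans}), it is bounded below by $H(\mu,\pi_{[d]\setminus\{j\}}^{-1}\mathcal{E}_{i+m_j}\mid\mathcal{E}_i)-O(1)$, and then Lemma~\ref{lem:lb on proj of comp} gives $\ge m_j\bigl(\sum_{j'\ne j}\chi_{j'}-\eta\bigr)$. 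For the second summand, write $\sigma=\int T_y\theta^{*k_j}\:d\mu(y)$ and apply the same concavity argument in the other direction to bound it below by $H(\theta^{*k_j},\mathcal{E}_{i+m_j}\mid\mathcal{E}_i\vee\pi_{[d]\setminus\{j\}}^{-1}\mathcal{E}_{i+m_j})-O(1)$; for $i\in\mathrm{Q}_j$ this is what the Proposition controls. No multi-scale component decomposition is involved: the additivity across the two coordinate blocks comes for free from the chain rule, and the two concavity steps --- each integrating out one factor of the convolution --- are all that is needed to pass from $\sigma$ to $\mu$ and to $\theta^{*k_j}$ respectively. The same one-line concavity argument (with $\sigma=\int T_y\mu\:d\theta^{*k_j}(y)$) also justifies your implicit claim that $h_i\ge\kappa-\eta$ for \emph{all} $i$ via Lemma~\ref{lem:lb on ent of comp}, which as stated only controls $\mu$, not $\theta^{*k_j}*\mu$. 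Once you fill in these two concavity steps, your outline becomes the paper's proof.
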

\begin{proof}
Let $0<\epsilon<1$, let $n\in\mathbb{Z}_{>0}$ be with $\epsilon^{-1}\ll n$,
and let $\theta\in\mathcal{M}_{\mathrm{c}}(\mathbb{R}^{d})$ be with
$\mathrm{diam}(\mathrm{supp}(\theta))\le\epsilon^{-1}$ and $\frac{1}{n}H(\theta,\mathcal{E}_{n})>\epsilon$.
Let $0<\delta_{0}<1$ and $m_{1},...,m_{d},k_{1},...,k_{d}\in\mathbb{Z}_{>0}$
be such that $\epsilon^{-1}\ll\delta_{0}^{-1}\ll m_{d}$, $k_{1}\ll n$,
$m_{j}\ll k_{j}$ for $1\le j\le d$, and $k_{j+1}\ll m_{j}$ for
$1\le j<d$.

Since $\dim\mu<d$, we have $\sum_{l=1}^{d}\chi_{l}>\kappa$. By Proposition
\ref{prop:full ent of slices}, there exists $1\le j\le d$ so that
$\lambda_{n}(\mathrm{Q}_{j})>\delta_{0}$, where $\mathrm{Q}_{j}$
is the set of all integers $1\le i\le n$ such that
\[
\frac{1}{m_{j}}H\left(\theta^{*k_{j}},\mathcal{E}_{i+m_{j}}\mid\mathcal{E}_{i}\vee\pi_{[d]\setminus\{j\}}^{-1}\mathcal{E}_{i+m_{j}}\right)>\chi_{j}-\frac{1}{2}\left(\sum_{l=1}^{d}\chi_{l}-\kappa\right).
\]

Let $0<\eta<1$ be with $\delta_{0}^{-1}\ll\eta^{-1}\ll m_{d}$. By
Lemma \ref{lem:ent=00003Davg of loc ent} and since $\mathrm{diam}(\mathrm{supp}(\theta^{*k_{j}}*\mu))=O(k_{j}\epsilon^{-1})$,
\[
\frac{1}{n}H(\theta^{*k_{j}}*\mu,\mathcal{E}_{n})=\mathbb{E}_{1\le i\le n}\left(\frac{1}{m_{j}}H\left(\theta^{*k_{j}}*\mu,\mathcal{E}_{i+m_{j}}\mid\mathcal{E}_{i}\right)\right)+O\left(\frac{m_{j}+\log(k_{j}\epsilon^{-1})}{n}\right).
\]
From this, by the concavity of conditional entropy, by (\ref{eq:comens part after trans}),
and since $m_{j},k_{j},\epsilon^{-1},\eta^{-1}\ll n$ and $\eta^{-1}\ll m_{j}$,
\begin{eqnarray}
\frac{1}{n}H(\theta^{*k_{j}}*\mu,\mathcal{E}_{n}) & \ge & \lambda_{n}([n]\setminus\mathrm{Q}_{j})\mathbb{E}_{i\in[n]\setminus\mathrm{Q}_{j}}\left(\frac{1}{m_{j}}H\left(\mu,\mathcal{E}_{i+m_{j}}\mid\mathcal{E}_{i}\right)\right)\nonumber \\
 & + & \lambda_{n}(\mathrm{Q}_{j})\mathbb{E}_{i\in\mathrm{Q}_{j}}\left(\frac{1}{m_{j}}H\left(\theta^{*k_{j}}*\mu,\mathcal{E}_{i+m_{j}}\mid\mathcal{E}_{i}\right)\right)-\eta.\label{eq:lb split Q_j and not}
\end{eqnarray}

Let $i\in\mathrm{Q}_{j}$ be given. By the conditional entropy formula
(\ref{eq:extended cond ent form}), by concavity, and from (\ref{eq:comens part after trans}),
\begin{eqnarray*}
\frac{1}{m_{j}}H\left(\theta^{*k_{j}}*\mu,\mathcal{E}_{i+m_{j}}\mid\mathcal{E}_{i}\right) & \ge & \frac{1}{m_{j}}H\left(\mu,\pi_{[d]\setminus\{j\}}^{-1}\mathcal{E}_{i+m_{j}}\mid\mathcal{E}_{i}\right)\\
 & + & \frac{1}{m_{j}}H\left(\theta^{*k_{j}},\mathcal{E}_{i+m_{j}}\mid\mathcal{E}_{i}\vee\pi_{[d]\setminus\{j\}}^{-1}\mathcal{E}_{i+m_{j}}\right)-\eta.
\end{eqnarray*}
By Lemma \ref{lem:lb on proj of comp},
\[
\frac{1}{m_{j}}H\left(\mu,\pi_{[d]\setminus\{j\}}^{-1}\mathcal{E}_{i+m_{j}}\mid\mathcal{E}_{i}\right)\ge\sum_{j'\in[d]\setminus\{j\}}\chi_{j'}-\eta.
\]
From the last two inequalities and by the definition of $\mathrm{Q}_{j}$,
\begin{equation}
\frac{1}{m_{j}}H\left(\theta^{*k_{j}}*\mu,\mathcal{E}_{i+m_{j}}\mid\mathcal{E}_{i}\right)\ge\frac{1}{2}\left(\sum_{j'\in[d]}\chi_{j'}+\kappa\right)-2\eta\text{ for }i\in\mathrm{Q}_{j}.\label{eq:lb all i in Q_j}
\end{equation}

By Lemma \ref{lem:lb on ent of comp},
\[
\frac{1}{m_{j}}H\left(\mu,\mathcal{E}_{i+m_{j}}\mid\mathcal{E}_{i}\right)\ge\kappa-\eta\text{ for }i\in\mathbb{Z}_{>0}.
\]
Thus, from (\ref{eq:lb split Q_j and not}) and (\ref{eq:lb all i in Q_j}),
and since $\lambda_{n}(\mathrm{Q}_{j})>\delta_{0}$,
\begin{eqnarray*}
\frac{1}{n}H(\theta^{*k_{j}}*\mu,\mathcal{E}_{n}) & \ge & \lambda_{n}\left([n]\setminus\mathrm{Q}_{j}\right)\kappa+\lambda_{n}\left(\mathrm{Q}_{j}\right)\frac{1}{2}\left(\sum_{j'\in[d]}\chi_{j'}+\kappa\right)-O(\eta)\\
 & > & \kappa+\frac{\delta_{0}}{2}\left(\sum_{j'\in[d]}\chi_{j'}-\kappa\right)-O(\eta).
\end{eqnarray*}
Hence, from $\sum_{j'\in[d]}\chi_{j'}>\kappa$ and $\epsilon^{-1}\ll\delta_{0}^{-1}\ll\eta^{-1}$,
\[
\frac{1}{n}H(\theta^{*k_{j}}*\mu,\mathcal{E}_{n})>\kappa+\delta_{0}^{2}.
\]

From the last inequality, by Lemma \ref{lem:lim of ent wrt non-conf part},
and since $\delta_{0}\ll n$,
\[
\frac{1}{n}H(\theta^{*k_{j}}*\mu,\mathcal{E}_{n})-\frac{1}{n}H(\mu,\mathcal{E}_{n})>\delta_{0}^{2}/2.
\]
Thus, from Corollary \ref{cor:from Kaimanovich=002013Vershik} and
since $k_{j},\delta_{0}^{-1}\ll n$,
\[
\frac{1}{n}H(\theta*\mu,\mathcal{E}_{n})\ge\frac{1}{n}H(\mu,\mathcal{E}_{n})+\frac{\delta_{0}^{2}}{4k_{j}}.
\]
This, together with another application of Lemma \ref{lem:lim of ent wrt non-conf part},
completes the proof of the theorem with $\delta=\frac{\delta_{0}^{2}}{8k_{j}}$.
\end{proof}

\section{\label{sec:Proof-of main thm for measures}Proof of the main result
for self-affine measures}

The purpose of this section is to prove Theorem \ref{thm:main result for measures}.

\subsection{\label{subsec:Ledrappier-Young-formula}Ledrappier--Young formula
for diagonal self-affine measures}

Denote by $\mathcal{B}$ the Borel $\sigma$-algebra of $\mathbb{R}^{d}$.
Given $\theta\in\mathcal{M}_{\mathrm{c}}(\mathbb{R}^{d})$ and a sub-$\sigma$-algebra
$\mathcal{A}$ of $\mathcal{B}$, write $\{\theta_{x}^{\mathcal{A}}\}_{x\in\mathbb{R}^{d}}$
for the disintegration of $\theta$ with respect to $\mathcal{A}$
(see e.g. \cite[Section 5.3]{EiWa}). For $J\subset[d]$ we write
$\mathcal{B}_{J}$ in place of $\pi_{J}^{-1}\mathcal{B}$.

Let $\beta:=p^{\mathbb{N}}$ be the Bernoulli measure on $\Lambda^{\mathbb{N}}$
corresponding to $p$. That is, $\beta([u])=p_{u}$ for $u\in\Lambda^{*}$,
where $[u]$ is the cylinder set corresponding to $u$. Write $\mathcal{P}:=\left\{ [i]\right\} _{i\in\Lambda}$
for the partition of $\Lambda^{\mathbb{N}}$ into first generation
cylinders. For each $J\subset[d]$ set
\[
\mathrm{H}_{J}:=H\left(\beta,\mathcal{P}\mid\Pi^{-1}\mathcal{B}_{J}\right),
\]
where recall from Section \ref{subsec:The-setup} that $\Pi$ is the
coding corresponding to $\Phi$. Note that $\mathrm{H}_{\emptyset}=H(p)$.

Recall that we assume that $\chi_{1}<...<\chi_{d}$. Taking this into
account, the following theorem follows directly from \cite[Theorem 1.4]{MR4557759}.
See also \cite[Theorem 2.11]{FH-dimension} for a somewhat simpler
statement, which does not involve the dimension of slices.
\begin{thm}
\label{thm:LY formula}Let $1\le j_{1}<...<j_{s}\le d$, for $0\le b\le s$
set $J_{b}:=\{j_{1},...,j_{b}\}$, and set $\theta:=\pi_{J_{s}}\mu$.
Then, for each $0\le k<l\le s$ and for $\theta$-a.e. $x$, the measure
$\pi_{J_{l}}\theta_{x}^{\mathcal{B}_{J_{k}}}$ is exact dimensional
with,
\[
\dim\pi_{J_{l}}\theta_{x}^{\mathcal{B}_{J_{k}}}=\sum_{b=k+1}^{l}\frac{\mathrm{H}_{J_{b-1}}-\mathrm{H}_{J_{b}}}{\chi_{j_{b}}}.
\]
\end{thm}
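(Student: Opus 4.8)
The plan is to deduce the theorem from the general Ledrappier--Young formula \cite[Theorem 1.4]{MR4557759} by first reducing to the self-affine measure $\theta=\pi_{J_s}\mu$ and then matching every quantity appearing there with the ones in the statement.

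First I would record, as already noted in Section \ref{subsec:The-setup}, that $\theta=\pi_{J_s}\mu$ is (an embedded copy of) the self-affine measure on $\mathbb{R}^{J_s}$ corresponding to the diagonal IFS $\Phi_{J_s}=\{\varphi_{i,J_s}\}_{i\in\Lambda}$ and the probability vector $p$, whose coding map is $\pi_{J_s}\circ\Pi$. Its linear parts are $\mathrm{diag}\big((r_{i,j})_{j\in J_s}\big)$, so its Lyapunov exponents with respect to $p$ are exactly $\chi_{j_1}<\dots<\chi_{j_s}$, all distinct. Because these linear parts are diagonal and the exponents are distinct, the Oseledets filtration of the associated cocycle is deterministic and constant: its rank-$b$ subspace is $\mathrm{span}\{e_{j_1},\dots,e_{j_b}\}$, with the ordering dictated by $\chi_{j_1}<\dots<\chi_{j_s}$. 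Hence the coordinate projection $\pi_{J_b}$, restricted to $\mathbb{R}^{J_s}$, is precisely the projection onto the rank-$b$ Lyapunov subspace, and the conditional measures $\pi_{J_l}\theta_x^{\mathcal{B}_{J_k}}$ are exactly the fibre measures that the general formula controls.

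Next I would apply \cite[Theorem 1.4]{MR4557759} to $\theta$: it yields that for $\theta$-a.e.\ $x$ the measure $\pi_{J_l}\theta_x^{\mathcal{B}_{J_k}}$ is exact dimensional, with dimension a sum over $b=k+1,\dots,l$ of the ratio of a conditional-entropy increment to $\chi_{j_b}$. It then remains only to identify the increments. In \cite{MR4557759} the relevant quantity at level $b$ is $H\big(\beta,\mathcal{P}\mid(\pi_{J_s}\circ\Pi)^{-1}\mathcal{B}_{J_{b-1}}\big)-H\big(\beta,\mathcal{P}\mid(\pi_{J_s}\circ\Pi)^{-1}\mathcal{B}_{J_b}\big)$, where $\mathcal{B}_{J_b}$ is read inside $\mathbb{R}^{J_s}$. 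Since $\pi_{J_b}=\pi_{J_b}\circ\pi_{J_s}$ on $\mathbb{R}^d$ (as $J_b\subset J_s$), one has $(\pi_{J_s}\circ\Pi)^{-1}\mathcal{B}_{J_b}=\Pi^{-1}\mathcal{B}_{J_b}$, so this increment equals $\mathrm{H}_{J_{b-1}}-\mathrm{H}_{J_b}$; plugging in gives the claimed formula. (For $k=0$, $\mathcal{B}_{J_0}$ is trivial and the statement reduces to the exact dimensionality of $\pi_{J_l}\mu$ with the expected telescoping formula, recovering the known case.)

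The two points that genuinely require care are: (i) verifying that the Oseledets flag of the diagonal cocycle, with the chosen ordering of exponents, is the coordinate flag $J_0\subset J_1\subset\dots\subset J_s$ in that order, so that the roles of the indices $j_1<\dots<j_s$ and of the exponents $\chi_{j_b}$ align with the order in which \cite{MR4557759} lists Lyapunov subspaces (if \cite{MR4557759} builds the flag from the fastest directions, this amounts to re-indexing the same telescoping sum, but it must be checked); and (ii) confirming that \cite[Theorem 1.4]{MR4557759} delivers the conditional dimensions for all pairs $0\le k<l\le s$, not just $k=0$ — either directly from its statement, or by a disintegration/chain-rule argument: $\theta_x^{\mathcal{B}_{J_k}}$ is, for $\theta$-a.e.\ $x$, supported on a fibre of $\pi_{J_k}$, and its projections to the $\pi_{J_l}$ decompose by the Abramov--Rokhlin type additivity of the Ledrappier--Young dimensions along the flag. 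Since the main results of \cite{MR4557759} hold with no separation or irreducibility hypotheses, no assumptions on $\Phi$ beyond those already in force are needed, which is why the theorem follows directly.
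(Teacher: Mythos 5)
Your proposal is correct and follows essentially the same route as the paper: the paper offers no argument beyond noting that $\chi_1<\dots<\chi_d$ and asserting that the statement ``follows directly from \cite[Theorem 1.4]{MR4557759}'', and your write-up is precisely the bookkeeping one would do to justify that assertion --- identifying $\theta$ as the self-affine measure for $\Phi_{J_s}$, noting that diagonality plus distinct exponents forces the Oseledets flag to be the coordinate flag $J_0\subset J_1\subset\dots\subset J_s$, and matching the entropy increments via $(\pi_{J_s}\circ\Pi)^{-1}\mathcal{B}_{J_b}=\Pi^{-1}\mathcal{B}_{J_b}$. The two caveats you flag (the ordering convention of the flag in \cite{MR4557759}, and whether that theorem already covers all pairs $0\le k<l\le s$ rather than only $k=0$) are exactly the points one must confirm against the cited statement; the paper implicitly treats both as immediate.
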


\begin{rem}
\label{rem:after LY formula}In the notation of Theorem \ref{thm:LY formula},
note that for each $1\le b\le s$ and for $\theta$-a.e. $x$ the
measure $\pi_{J_{b}}\theta_{x}^{\mathcal{B}_{J_{b-1}}}$ is supported
on a translate of the line $e_{j_{b}}\mathbb{R}$. Thus $\dim\pi_{J_{b}}\theta_{x}^{\mathcal{B}_{J_{b-1}}}\le1$,
which by the theorem implies that $0\le\mathrm{H}_{J_{b-1}}-\mathrm{H}_{J_{b}}\le\chi_{j_{b}}$.
\end{rem}

\subsection{Notations for affine maps}

Write $\mathrm{Aff}(d)$ for the group of invertible affine maps from
$\mathbb{R}^{d}$ into itself. Let $\mathrm{ev}_{0}:\mathrm{Aff}(d)\rightarrow\mathbb{R}^{d}$
be the evaluation mapping of $\mathrm{Aff}(d)$ at the point $0$.
That is, $\mathrm{ev}_{0}(\psi)=\psi(0)$ for $\psi\in\mathrm{Aff}(d)$.
Given $\psi\in\mathrm{Aff}(d)$ we denote its linear part by $A_{\psi}$.
Thus, $\psi(x)=A_{\psi}x+\mathrm{ev}_{0}(\psi)$ for $x\in\mathbb{R}^{d}$.
Let $\mathcal{L}_{\mathrm{Aff}(d)}$ be the partition of $\mathrm{Aff}(d)$
according to the linear part. That is, for $\psi_{1},\psi_{2}\in\mathrm{Aff}(d)$
we have $\mathcal{L}_{\mathrm{Aff}(d)}(\psi_{1})=\mathcal{L}_{\mathrm{Aff}(d)}(\psi_{2})$
if and only if $A_{\psi_{1}}=A_{\psi_{2}}$.

For each $n\ge1$ set
\[
\nu^{(n)}:=\sum_{u\in\Lambda^{n}}p_{u}\cdot\delta_{\varphi_{u}}\in\mathcal{M}_{\mathrm{c}}\left(\mathrm{Aff}(d)\right),
\]
where $\delta_{\varphi_{u}}$ is the Dirac mass at $\varphi_{u}$.
Note that since the linear parts of the maps in $\Phi$ commute,
\begin{equation}
\#\left\{ L\in\mathcal{L}_{\mathrm{Aff}(d)}\::\:\nu^{(n)}(L)>0\right\} \le(n+1)^{|\Lambda|}.\label{eq:ub on card by commut of lin parts}
\end{equation}

\subsection{Asymptotic entropies of $\nu^{(n)}$}

We shall need the following theorem which resembles \cite[Theorem 1.4]{Ho1}.
\begin{thm}
\label{thm:lim of cond ent =00003D 0}Suppose that $\dim\mu<d$ and
that $\dim\pi_{J}\mu=|J|$ for each proper subset $J$ of $[d]$.
Then for every $M>1$,
\[
\underset{n}{\lim}\:\frac{1}{n}H\left(\nu^{(n)},\mathrm{ev}_{0}^{-1}(\mathcal{E}_{Mn})\vee\mathcal{L}_{\mathrm{Aff}(d)}\mid\mathrm{ev}_{0}^{-1}(\mathcal{E}_{n})\right)=0.
\]
\end{thm}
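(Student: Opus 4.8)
The plan is to prove the equivalent statement that $\frac1n H\big(\mathrm{ev}_0\nu^{(n)},\mathcal{E}_{Mn}\mid\mathcal{E}_n\big)\to 0$ for every $M>1$, where $\mathrm{ev}_0\nu^{(n)}=\sum_{u\in\Lambda^{n}}p_u\delta_{\varphi_u(0)}$. The reduction from the stated form costs only $O(\log n)$: by \eqref{eq:ub on card by commut of lin parts} the partition $\mathcal{L}_{\mathrm{Aff}(d)}$ has at most $(n+1)^{|\Lambda|}$ atoms on $\operatorname{supp}\nu^{(n)}$, so $H(\nu^{(n)},\mathcal{L}_{\mathrm{Aff}(d)})=O(\log n)$. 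Since $\mathcal{E}_{M'n}$ refines $\mathcal{E}_{Mn}$ for $M'\ge M$, the quantity is nondecreasing in $M$, so it suffices to take $M$ as large as we like. I will use throughout that $H(\mu,\mathcal{E}_r)=r\kappa+o(r)$, which follows from Lemma \ref{lem:lim of ent wrt non-conf part} because $\dim\pi_{[d-1]}\mu=d-1$ is among the hypotheses ($[d-1]$ being a proper subset of $[d]$); that $\dim\mu<d$ forces $\kappa<\sum_{j=1}^d\chi_j$; and that by Condition \ref{enu:cond reg 1-dim sub} every $\varphi_u$ factors as $\varphi_u=T_{\varphi_u(0)}\circ A_\chi^{-t_u}$ with $t_u\in\mathbb R_{>0}$, where (writing $(-\log|r_{i,1}|,\dots,-\log|r_{i,d}|)=s_i\chi$) one has $t_u\in[(\min_is_i)n,(\max_is_i)n]$ for $u\in\Lambda^{n}$ and $\sum_{u\in\Lambda^{n}}p_ut_u=n$.

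This is the non‑conformal, higher‑dimensional analogue of \cite[Theorem 1.4]{Ho1}, and I would prove it by contradiction as there, using the self‑affine fixed‑point relation together with the entropy‑increase result Theorem \ref{thm:ent inc result}. Suppose that for some $M>1$ there are $\epsilon>0$ and $n_k\uparrow\infty$ with $H\big(\mathrm{ev}_0\nu^{(n_k)},\mathcal{E}_{Mn_k}\mid\mathcal{E}_{n_k}\big)>\epsilon n_k$; by the monotonicity above we may take $M$ large. Fix $n=n_k$ and write $\theta:=\mathrm{ev}_0\nu^{(n)}$, so $H(\theta,\mathcal{E}_{Mn})>\epsilon n$. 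The point is that the identity $\mu=\sum_{u\in\Lambda^{n}}p_u\,\varphi_u\mu$, after grouping the words by the value of their (finitely many, by \eqref{eq:ub on card by commut of lin parts}) linear parts $A_\chi^{-t_u}$, becomes a mixture of honest convolutions: $\mu=\sum_\tau q_\tau\,\theta_\tau*\big(A_\chi^{-\tau}\mu\big)$, where $\tau$ ranges over the values of $t_u$, $q_\tau=\sum_{u:t_u=\tau}p_u$, and $\theta_\tau=q_\tau^{-1}\sum_{u:t_u=\tau}p_u\delta_{\varphi_u(0)}$. Because conditioning on this finite partition costs only $O(\log n)$, we get $\sum_\tau q_\tau H(\theta_\tau,\mathcal{E}_{Mn})\ge H(\theta,\mathcal{E}_{Mn})-O(\log n)>\epsilon n-o(n)$, so a $q$‑proportion bounded below in terms of $\epsilon$ and $M$ of the values $\tau$ are "good" in the sense that $\theta_\tau$ has entropy rate bounded below at scale $\mathcal{E}_{Mn}$.

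For each good $\tau$, rescale via $\theta_\tau*(A_\chi^{-\tau}\mu)=A_\chi^{-\tau}\big(A_\chi^{\tau}\theta_\tau*\mu\big)$; by \eqref{eq:comens part after scaling} the measure $A_\chi^{\tau}\theta_\tau$ still has positive entropy rate at scale $\mathcal{E}_{Mn-\tau}=\mathcal{E}_{\Theta(Mn)}$. Its support is large, but since the working scale is $\Theta(Mn)\gg n$, Lemma \ref{lem:ent=00003Davg of loc ent} lets us pass to bounded‑support components of $A_\chi^{\tau}\theta_\tau$ still carrying positive entropy rate, feed these into Theorem \ref{thm:ent inc result} (after absorbing the auxiliary affine maps into the partitions via \eqref{eq:comens part after trans}, \eqref{eq:comens part after scaling}), and obtain $\frac1{Mn-\tau}H\big(\theta_\tau*(A_\chi^{-\tau}\mu),\mathcal{E}_{Mn}\big)>\kappa+\delta$ for some $\delta=\delta(\epsilon,M)>0$. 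Combined with the trivial bound $H\big(\theta_\tau*(A_\chi^{-\tau}\mu),\mathcal{E}_{Mn}\big)\ge H(A_\chi^{-\tau}\mu,\mathcal{E}_{Mn})-O(1)=(Mn-\tau)\kappa-o(n)$ for the remaining $\tau$ (concavity of entropy and \eqref{eq:comens part after trans}, \eqref{eq:comens part after scaling}), and with $\sum_\tau q_\tau\tau=n$, the mixture inequality $H(\mu,\mathcal{E}_{Mn})\ge\sum_\tau q_\tau H\big(\theta_\tau*(A_\chi^{-\tau}\mu),\mathcal{E}_{Mn}\big)-O(\log n)$ upgrades to $H(\mu,\mathcal{E}_{Mn})\ge(M-1)n\kappa+c(\epsilon,M)\,n-o(n)$, which must be reconciled with $H(\mu,\mathcal{E}_{Mn})=Mn\kappa+o(n)$ as $n=n_k\to\infty$.

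The crux — and the main obstacle — is that the gain $c(\epsilon,M)n$ produced by the naive accounting (bounding the cost of "knowing which word $u$" by its full entropy) is only a fixed constant per unit $n$, which need not beat the leftover term $n\kappa$. The fix, as in Hochman's treatment, is to track the branching more carefully: one must relate $H(\mathrm{ev}_0\nu^{(n)},\mathcal{E}_{Mn}\mid\mathcal{E}_n)$ — the \emph{increment} of the branching entropy across the window $[n,Mn]$, i.e.\ the conditional mutual information between $u$ and the location of a $\mu$‑point at scales in that window — to the corresponding increment of $H(\mu,\mathcal{E}_\cdot)$, rather than going through the lossy bound above, and feed precisely this increment into Theorem \ref{thm:ent inc result}. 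Making this identification precise, while simultaneously controlling the accumulated $O(\log n)$ and $O(1)$ errors, handling the inhomogeneous scale advances $t_u$ (which is exactly where Condition \ref{enu:cond reg 1-dim sub} enters, via the single‑scalar parametrisation $A_\chi^{-t_u}$), and fixing the order of quantifiers ($\epsilon$, then $M$ large, then $n\to\infty$), is where the real work lies; the remaining manipulations are of the commensurability/concavity type used throughout Sections \ref{sec:Entropy-of-repeated-conv}–\ref{sec:Proof-of-ent-unc-res}.
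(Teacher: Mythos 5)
Your plan follows the same overall strategy as the paper — argue by contradiction, feed the fixed‑point decomposition of $\mu$ into the entropy‑increase theorem (Theorem \ref{thm:ent inc result}), and collide with Lemma \ref{lem:lim of ent wrt non-conf part} — and your opening reduction (dropping $\mathcal{L}_{\mathrm{Aff}(d)}$ at cost $O(\log n)$ using \eqref{eq:ub on card by commut of lin parts}) is correct. However, the specific decomposition you choose, grouping the words $u\in\Lambda^n$ by their linear part $A_\chi^{-\tau}$ alone, is too coarse, and the resulting accounting genuinely fails. As you yourself diagnose in the last paragraph, the mixture inequality only yields $H(\mu,\mathcal{E}_{Mn})\ge(M-1)n\kappa+\text{gain}-o(n)$, which is short of $Mn\kappa$ by the full $n\kappa$, and the gain from Theorem \ref{thm:ent inc result} is of order $\delta(\epsilon,M)\,n$, which cannot dominate $n\kappa$. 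Taking $M$ large does not help here (contrary to your suggestion): the per-scale entropy $\frac{1}{Mn}H(\theta_\tau,\mathcal{E}_{Mn})$ decays like $1/M$, so the $\delta$ you can extract from Theorem \ref{thm:ent inc result} decays with $M$ as well, and there is no net improvement; the paper keeps $M$ arbitrary but fixed and instead makes $\eta$ small relative to $\delta(\epsilon,M)$. Your proposed fix — ``track the branching more carefully,'' i.e.\ relate $H(\mathrm{ev}_0\nu^{(n)},\mathcal{E}_{Mn}\mid\mathcal{E}_n)$ to the increment of $H(\mu,\mathcal{E}_\cdot)$ — points in the right direction, but it is left entirely unspecified, so this is a genuine gap, not a complete argument.

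The missing ingredient is concrete. Writing $\mathcal{E}'_n:=\mathrm{ev}_0^{-1}(\mathcal{E}_n)$ and $\mathcal{L}:=\mathcal{L}_{\mathrm{Aff}(d)}$, one must decompose $\nu^{(n)}$ by $\mathcal{E}'_n\vee\mathcal{L}$ — that is, also by which cell of $\mathcal{E}_n$ contains $\varphi_u(0)$, not just by the linear part — and work with the conditional quantity $H(\mu,\mathcal{E}_{Mn}\mid\mathcal{E}_n)$ throughout. The contradiction hypothesis, in the form $H(\nu^{(n)},\mathcal{E}'_{Mn}\mid\mathcal{E}'_n\vee\mathcal{L})>\epsilon n$, then states directly that a positive $\nu^{(n)}$-proportion of the atoms give measures $\sigma_\varphi:=\mathrm{ev}_0\nu^{(n)}_{\mathcal{E}'_n\vee\mathcal{L}(\varphi)}$ with $\frac{1}{n}H(\sigma_\varphi,\mathcal{E}_{Mn})>\epsilon$, and — crucially — each $\sigma_\varphi$ lives in a single $\mathcal{E}_n$-cell, so this entropy is guaranteed to sit at scales between $n$ and $Mn$. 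After restricting to an LLN-typical set $F$ on which $A_\varphi\approx A_\chi^{-n}$, the piece $\sigma_\varphi*A_\varphi\mu$ occupies only $2^{O(\eta n)}$ cells of $\mathcal{E}_n$, so removing the $\mathcal{E}_n$-conditioning costs only $O(\eta n)$, not $\Theta(n)$. That is exactly what kills the leftover $n\kappa$: both sides of the inequality become $(M-1)n\kappa\pm O(\eta n)$ plus the gain, and comparing with Lemma \ref{lem:lim of ent wrt non-conf part} forces $\delta=O(\eta)$, contradicting $\eta\ll\delta$. In your scheme, $\theta_\tau$ is spread over all of $K_\Phi$, so $A_\chi^{\tau}\theta_\tau$ occupies $2^{\Theta(n)}$ cells of $\mathcal{E}_0$; passing to bounded-support components via Lemma \ref{lem:ent=00003Davg of loc ent} costs $\Theta(n)$ entropy, and moreover $H(\theta_\tau,\mathcal{E}_{Mn})>\epsilon'n$ does not preclude all of this entropy living below scale $n$, where it is useless for the argument.
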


\begin{proof}
Write \emph{$\mathcal{L}:=\mathcal{L}_{\mathrm{Aff}(d)}$}, and for
each $n\ge0$ set $\mathcal{E}_{n}':=\mathrm{ev}_{0}^{-1}(\mathcal{E}_{n})$.
Let $M>1$ be given, and assume by contradiction that
\[
\underset{n}{\limsup}\:\frac{1}{n}H\left(\nu^{(n)},\mathcal{E}_{Mn}'\vee\mathcal{L}\mid\mathcal{E}_{n}'\right)>0.
\]
By (\ref{eq:ub on card by commut of lin parts}),
\[
\underset{n}{\limsup}\:\frac{1}{n}H\left(\nu^{(n)},\mathcal{E}_{Mn}'\mid\mathcal{E}_{n}'\vee\mathcal{L}\right)>0.
\]
Thus, there exist $0<\epsilon<1$ and an infinite $\mathrm{Q}\subset\mathbb{N}$,
so that
\[
\nu^{(n)}\left\{ \varphi\::\:\frac{1}{n}H\left(\mathrm{ev}_{0}\nu_{\mathcal{E}_{n}'\vee\mathcal{L}(\varphi)}^{(n)},\mathcal{E}_{Mn}\right)>\epsilon\right\} >\epsilon\text{ for }n\in\mathrm{Q}.
\]

Let $0<\delta,\eta<1$ and $n\in\mathrm{Q}$ be such that,
\begin{equation}
\epsilon^{-1},M\ll\delta^{-1}\ll\eta^{-1}\ll n.\label{eq:order of params lim of cond ent}
\end{equation}
Write $F$ for the set of all $\varphi\in\mathrm{supp}(\nu^{(n)})$
for which there exist $\alpha_{1},...,\alpha_{d}\in\mathbb{R}$ so
that $A_{\varphi}=\mathrm{diag}(\alpha_{1},...,\alpha_{d})$ and,
\[
\left|-\frac{1}{n}\log|\alpha_{j}|-\chi_{j}\right|<\eta\text{ for }1\le j\le d.
\]
By the weak law of large numbers we may assume that $\nu^{(n)}(F)>1-\eta$.

From $\mu=\sum_{u\in\Lambda^{n}}p_{u}\cdot\varphi_{u}\mu$ we get,
\[
\mu=\int\mathrm{ev}_{0}\nu_{\mathcal{E}_{n}'\vee\mathcal{L}(\varphi)}^{(n)}*A_{\varphi}\mu\:\:d\nu^{(n)}(\varphi).
\]
By the definition of $F$ it follows that for each $\varphi\in F$,
\[
\log\#\left\{ E\in\mathcal{E}_{n}\::\:\mathrm{supp}\left(\mathrm{ev}_{0}\nu_{\mathcal{E}_{n}'\vee\mathcal{L}(\varphi)}^{(n)}*A_{\varphi}\mu\right)\cap E\ne\emptyset\right\} =O(n\eta).
\]
Moreover, for $\varphi\in F$ the partitions $A_{\varphi}^{-1}\mathcal{E}_{Mn}$
and $\mathcal{E}_{(M-1)n}$ are $2^{O(n\eta)}$-commensurable. From
these facts and by the concavity of conditional entropy,
\begin{eqnarray}
\frac{1}{n}H\left(\mu,\mathcal{E}_{Mn}\mid\mathcal{E}_{n}\right) & \ge & \int_{F}\frac{1}{n}H\left(\mathrm{ev}_{0}\nu_{\mathcal{E}_{n}'\vee\mathcal{L}(\varphi)}^{(n)}*A_{\varphi}\mu,\mathcal{E}_{Mn}\right)\:d\nu^{(n)}(\varphi)-O(\eta)\nonumber \\
 & = & \int_{F}\frac{1}{n}H\left(A_{\varphi}^{-1}\mathrm{ev}_{0}\nu_{\mathcal{E}_{n}'\vee\mathcal{L}(\varphi)}^{(n)}*\mu,\mathcal{E}_{(M-1)n}\right)\:d\nu^{(n)}(\varphi)-O(\eta).\label{eq:first lb}
\end{eqnarray}
Note that from (\ref{eq:comens part after trans}), by the concavity
of entropy, and by Lemma \ref{lem:lim of ent wrt non-conf part},
it follows that for every $\sigma\in\mathcal{M}_{\mathrm{c}}(\mathbb{R}^{d})$,
\begin{equation}
\frac{1}{n}H\left(\sigma*\mu,\mathcal{E}_{(M-1)n}\right)\ge(M-1)\kappa-\eta.\label{eq:ent lb of conv with gen sig}
\end{equation}

Write $Z$ for the set of all $\varphi\in F$ for which,
\[
\frac{1}{n}H\left(\mathrm{ev}_{0}\nu_{\mathcal{E}_{n}'\vee\mathcal{L}(\varphi)}^{(n)},\mathcal{E}_{Mn}\right)>\epsilon.
\]
Since $n\in\mathrm{Q}$ and $\nu^{(n)}(F)>1-\eta$, we have $\nu^{(n)}(Z)\ge\epsilon-\eta>\epsilon/2$.
Fix $\varphi\in Z$ and set $\theta:=A_{\varphi}^{-1}\mathrm{ev}_{0}\nu_{\mathcal{E}_{n}'\vee\mathcal{L}(\varphi)}^{(n)}$.
From $\varphi\in Z$ and since $A_{\varphi}^{-1}\mathcal{E}_{Mn}$
and $\mathcal{E}_{(M-1)n}$ are $2^{O(n\eta)}$-commensurable,
\[
\frac{1}{n}H\left(\theta,\mathcal{E}_{(M-1)n}\right)>\epsilon-O(\eta).
\]
From $\varphi\in F$,
\[
\log\#\left\{ E\in\mathcal{E}_{0}\::\:\mathrm{supp}(\theta)\cap E\ne\emptyset\right\} =O(n\eta).
\]
Thus, by the convexity bound (\ref{eq:conc =000026 almo conv of ent}),
\begin{equation}
\mathbb{E}_{i=0}\left(\frac{1}{n}H\left(\theta_{x,i},\mathcal{E}_{(M-1)n}\right)\right)>\epsilon-O(\eta)>\epsilon/2.\label{eq:lb avg ent lev-0 ent}
\end{equation}

For each $x\in\mathrm{supp}(\theta)$
\[
\frac{1}{n}H\left(\theta_{x,0},\mathcal{E}_{(M-1)n}\right)\le CM,
\]
where $C>1$ is a global constant. Hence, setting
\[
Y:=\left\{ x\in\mathbb{R}^{d}\::\:\frac{1}{n}H\left(\theta_{x,0},\mathcal{E}_{(M-1)n}\right)>\frac{\epsilon}{4}\right\} ,
\]
it follows by (\ref{eq:lb avg ent lev-0 ent}) that $\theta(Y)\ge\frac{\epsilon}{4CM}$.
From this, by the concavity of entropy, from (\ref{eq:ent lb of conv with gen sig}),
and by Theorem \ref{thm:ent inc result},
\begin{eqnarray*}
\frac{1}{n}H\left(\theta*\mu,\mathcal{E}_{(M-1)n}\right) & \ge & \mathbb{E}_{i=0}\left(\frac{1}{n}H\left(\theta_{x,i}*\mu,\mathcal{E}_{(M-1)n}\right)\right)\\
 & \ge & \theta(Y)(M-1)(\kappa+\delta)+\theta(\mathbb{R}^{d}\setminus Y)((M-1)\kappa-\eta)\\
 & \ge & (M-1)\kappa+\frac{\epsilon(M-1)}{4CM}\delta-\eta.
\end{eqnarray*}

Note that the last estimate holds for all $\varphi\in Z$. From this,
$\nu^{(n)}(F)>1-\eta$, $\nu^{(n)}(Z)>\epsilon/2$, (\ref{eq:first lb})
and (\ref{eq:ent lb of conv with gen sig}), we get
\begin{eqnarray}
\frac{1}{n}H\left(\mu,\mathcal{E}_{Mn}\mid\mathcal{E}_{n}\right) & \ge & \nu^{(n)}(Z)\left((M-1)\kappa+\frac{\epsilon(M-1)}{4CM}\delta-\eta\right)\nonumber \\
 & + & \nu^{(n)}(F\setminus Z)\left((M-1)\kappa-\eta\right)-O(\eta)\label{eq:cl to end of cond thm}\\
 & \ge & (1-\eta)(M-1)\kappa+\frac{\epsilon^{2}(M-1)}{8CM}\delta-O(\eta).\nonumber 
\end{eqnarray}

On the other hand by Lemma \ref{lem:lim of ent wrt non-conf part},
\[
\frac{1}{n}H\left(\mu,\mathcal{E}_{Mn}\mid\mathcal{E}_{n}\right)=\frac{1}{n}H\left(\mu,\mathcal{E}_{Mn}\right)-\frac{1}{n}H\left(\mu,\mathcal{E}_{n}\right)\le(M-1)\kappa+\eta.
\]
This together with (\ref{eq:cl to end of cond thm}) gives,
\[
\delta=O\left(\frac{M^{2}\eta}{\epsilon^{2}(M-1)}\right).
\]
But this contradicts (\ref{eq:order of params lim of cond ent}),
which completes the proof of the theorem.
\end{proof}
We shall also need the following lemma.
\begin{lem}
\label{lem:ent of ev nu is close to ent of mu}We have,
\[
\underset{n}{\lim}\:\left|\frac{1}{n}H\left(\mu,\mathcal{E}_{n}\right)-\frac{1}{n}H\left(\mathrm{ev}_{0}\nu^{(n)},\mathcal{E}_{n}\right)\right|=0.
\]
\end{lem}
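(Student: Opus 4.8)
The plan is to realise both $\mu$ and $\mathrm{ev}_{0}\nu^{(n)}$ as push-forwards of the Bernoulli measure $\beta=p^{\mathbb{N}}$ under two maps on $\Lambda^{\mathbb{N}}$ which are comparable at scale $n$, relative to $\mathcal{E}_{n}$, off an asymptotically negligible event, and then to conclude using (\ref{eq:comens part under inv img}). For $\omega=(\omega_{k})_{k\ge0}\in\Lambda^{\mathbb{N}}$ write $\omega|_{n}:=\omega_{0}\cdots\omega_{n-1}\in\Lambda^{n}$, and let $g_{n}:\Lambda^{\mathbb{N}}\to\mathbb{R}^{d}$ be given by $g_{n}(\omega):=\varphi_{\omega|_{n}}(0)$. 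Then $\Pi\beta=\mu$ and $g_{n}\beta=\mathrm{ev}_{0}\nu^{(n)}$, hence $H(\mu,\mathcal{E}_{n})=H(\beta,\Pi^{-1}\mathcal{E}_{n})$ and $H(\mathrm{ev}_{0}\nu^{(n)},\mathcal{E}_{n})=H(\beta,g_{n}^{-1}\mathcal{E}_{n})$. Since $\Pi(\omega)=\varphi_{\omega|_{n}}(\Pi(\omega_{n},\omega_{n+1},\dots))$, we have $\Pi(\omega)-g_{n}(\omega)=A_{\varphi_{\omega|_{n}}}\Pi(\omega_{n},\omega_{n+1},\dots)$, so with $C_{0}:=\sup_{x\in K_{\Phi}}\max_{1\le j\le d}|\langle e_{j},x\rangle|$ and using (\ref{eq:lin part =00003D A^-t}) one gets
\[
|\pi_{j}(\Pi(\omega)-g_{n}(\omega))|\le C_{0}\,2^{-t_{\omega|_{n}}\chi_{j}}\qquad\text{for }1\le j\le d,
\]
where $t_{\omega|_{n}}=\sum_{k=0}^{n-1}t_{\omega_{k}}$ by $t_{uv}=t_{u}+t_{v}$.

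Fix $0<\eta<1$, and set $X_{1}:=\{\omega\in\Lambda^{\mathbb{N}}:t_{\omega|_{n}}<(1-\eta)n\}$ and $X_{0}:=\Lambda^{\mathbb{N}}\setminus X_{1}$. The variables $t_{\omega_{k}}$ are i.i.d.\ under $\beta$, bounded, with common mean $\sum_{i\in\Lambda}p_{i}t_{i}=1$ — the last identity follows at once from $\chi_{j}=-\sum_{i}p_{i}\log|r_{i,j}|$ and (\ref{eq:lin part =00003D A^-t}) — so by the law of large numbers $\beta(X_{1})\to0$ as $n\to\infty$. On $X_{0}$ the displayed bound gives $|\pi_{j}(\Pi(\omega)-g_{n}(\omega))|\le(C_{0}2^{\eta n\chi_{d}})\,2^{-n\chi_{j}}$ for all $j$, so by (\ref{eq:comens part under inv img}) (with $J=[d]$) the partitions $\Pi^{-1}\mathcal{E}_{n}$ and $g_{n}^{-1}\mathcal{E}_{n}$, restricted to $X_{0}$, are $2^{O(\eta n)}$-commensurable; consequently $|H(\beta_{X_{0}},\Pi^{-1}\mathcal{E}_{n})-H(\beta_{X_{0}},g_{n}^{-1}\mathcal{E}_{n})|=O(\eta n+1)$, where $\beta_{X_{0}}:=\beta(X_{0})^{-1}\beta|_{X_{0}}$ (and similarly $\beta_{X_{1}}$). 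On $X_{1}$ we use only that $\mu$ and $\mathrm{ev}_{0}\nu^{(n)}$ are supported in a fixed bounded subset of $\mathbb{R}^{d}$ independent of $n$, so by (\ref{eq:card ub for ent}) both $H(\beta_{X_{1}},\Pi^{-1}\mathcal{E}_{n})$ and $H(\beta_{X_{1}},g_{n}^{-1}\mathcal{E}_{n})$ are $O(n)$.

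Applying (\ref{eq:conc =000026 almo conv of ent}) to split $H(\beta,\mathcal{D})$ over the two-set partition $\{X_{0},X_{1}\}$ — which costs $O(1)$ — for $\mathcal{D}\in\{\Pi^{-1}\mathcal{E}_{n},g_{n}^{-1}\mathcal{E}_{n}\}$, and inserting the two bounds just obtained, yields
\[
\big|H(\mu,\mathcal{E}_{n})-H(\mathrm{ev}_{0}\nu^{(n)},\mathcal{E}_{n})\big|\le O(\eta n+1)+\beta(X_{1})\,O(n).
\]
Dividing by $n$ and letting $n\to\infty$ gives $\limsup_{n}\frac{1}{n}\big|H(\mu,\mathcal{E}_{n})-H(\mathrm{ev}_{0}\nu^{(n)},\mathcal{E}_{n})\big|=O(\eta)$, and since $\eta$ is arbitrary the lemma follows.

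The delicate point — and the reason the statement is not immediate — is that for words $w\in\Lambda^{n}$ with $t_{w}<n$ the rectangle $\varphi_{w}(K_{\Phi})$ may be much larger than a cell of $\mathcal{E}_{n}$, so $\Pi$ and $g_{n}$ are genuinely \emph{not} $O(1)$-close at scale $n$ there; what saves the argument is that, after removing the set $\{t_{\omega|_{n}}<(1-\eta)n\}$ (which has $\beta$-measure tending to $0$), the residual discrepancy inflates the commensurability constant by only a factor $2^{O(\eta n)}$, which is harmless once one normalises by $n$ and lets $\eta\downarrow0$. Finally, note that the very existence of a single exponent $t_{w}$ governing all $d$ coordinates — used throughout via (\ref{eq:lin part =00003D A^-t}) — relies on the hypothesis that the linear parts of $\Phi$ lie in a one-dimensional subgroup.
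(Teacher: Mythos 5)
Your proof is correct and follows essentially the same route as the paper's. Both arguments realise $\mu$ and $\mathrm{ev}_{0}\nu^{(n)}$ as push-forwards of $\beta$ under $\Pi$ and $\Pi_n$ (your $g_n$) respectively, bound $|\pi_j(\Pi(\omega)-\Pi_n(\omega))|$ by $O(2^{\eta n}2^{-\chi_j n})$ on a "good" set of $\beta$-measure close to $1$ identified by the law of large numbers, invoke the commensurability estimate (\ref{eq:comens part under inv img}) there, handle the bad set with the crude $O(n)$ entropy bound from compact support, and close via the convexity bound (\ref{eq:conc =000026 almo conv of ent}) and $\eta\downarrow0$. The only cosmetic difference is in the parameterisation of the good set: the paper conditions coordinate-wise on $|-\frac{1}{n}\log|r_{\omega|_n,j}|-\chi_j|<\epsilon$, whereas you condition one-sidedly on the single scalar $t_{\omega|_n}\ge(1-\eta)n$ (using the one-dimensional-subgroup hypothesis to know $\sum_i p_i t_i=1$), which is slightly cleaner but equivalent since only the lower bound on $t_{\omega|_n}$ is needed to control the discrepancy.
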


\begin{proof}
Let $0<\epsilon<1$ and let $n\ge1$ be with $\epsilon^{-1}\ll n$.
Given $(\omega_{k})_{k\ge0}=\omega\in\Lambda^{\mathbb{N}}$, write
$\omega|_{n}:=\omega_{0}...\omega_{n-1}\in\Lambda^{n}$ for the $n$th
prefix of $\omega$. Let $F$ be the set of all $\omega\in\Lambda^{\mathbb{N}}$
for which,
\[
\left|-\frac{1}{n}\log|r_{\omega|_{n},j}|-\chi_{j}\right|<\epsilon\text{ for }1\le j\le d.
\]
By the weak law of large numbers we may assume that $\beta(F)>1-\epsilon$,
where recall that $\beta$ is the Bernoulli measure corresponding
to $p$.

Let $\Pi_{n}:\Lambda^{\mathbb{N}}\rightarrow\mathbb{R}^{d}$ be with
$\Pi_{n}(\omega)=\varphi_{\omega|_{n}}(0)$ for $\omega\in\Lambda^{\mathbb{N}}$.
Note that,
\[
\mathrm{ev}_{0}\nu^{(n)}=\Pi_{n}\beta=\beta(F)\Pi_{n}\beta_{F}+\beta(F^{c})\Pi_{n}\beta_{F^{c}}.
\]
Moreover,
\[
\frac{\beta(F^{c})}{n}H\left(\Pi_{n}\beta_{F^{c}},\mathcal{E}_{n}\right)=O(\epsilon).
\]
Thus, by the concavity and almost convexity of entropy,
\begin{equation}
\frac{\beta(F)}{n}H\left(\Pi_{n}\beta_{F},\mathcal{E}_{n}\right)\le\frac{1}{n}H\left(\mathrm{ev}_{0}\nu^{(n)},\mathcal{E}_{n}\right)\le\frac{\beta(F)}{n}H\left(\Pi_{n}\beta_{F},\mathcal{E}_{n}\right)+O(\epsilon).\label{eq:close to push =0000231}
\end{equation}
Since
\[
\mu=\Pi\beta=\beta(F)\Pi\beta_{F}+\beta(F^{c})\Pi\beta_{F^{c}},
\]
it follows in the same manner that
\begin{equation}
\frac{\beta(F)}{n}H\left(\Pi\beta_{F},\mathcal{E}_{n}\right)\le\frac{1}{n}H\left(\mu,\mathcal{E}_{n}\right)\le\frac{\beta(F)}{n}H\left(\Pi\beta_{F},\mathcal{E}_{n}\right)+O(\epsilon).\label{eq:close to push =0000232}
\end{equation}

For $(\omega_{k})_{k\ge0}=\omega\in F$ and $1\le j\le d$,
\begin{multline*}
\left|\pi_{j}\left(\Pi_{n}\omega-\Pi\omega\right)\right|=\left|\pi_{j}\left(\varphi_{\omega|_{n}}(0)-\varphi_{\omega|_{n}}\left(\Pi(\omega_{k})_{k\ge n}\right)\right)\right|\\
=\left|r_{\omega|_{n},j}\pi_{j}\left(\Pi(\omega_{k})_{k\ge n}\right)\right|=O\left(2^{\epsilon n}2^{-\chi_{j}n}\right).
\end{multline*}
Thus by (\ref{eq:comens part under inv img}),
\[
\left|\frac{1}{n}H\left(\Pi_{n}\beta_{F},\mathcal{E}_{n}\right)-\frac{1}{n}H\left(\Pi\beta_{F},\mathcal{E}_{n}\right)\right|=O(\epsilon).
\]
From this, (\ref{eq:close to push =0000231}) and (\ref{eq:close to push =0000232}),
we get
\[
\left|\frac{1}{n}H\left(\mathrm{ev}_{0}\nu^{(n)},\mathcal{E}_{n}\right)-\frac{1}{n}H\left(\mu,\mathcal{E}_{n}\right)\right|=O(\epsilon),
\]
which completes the proof of the lemma.
\end{proof}

\subsection{\label{subsec:Proof-of-main Theorem for measures}Proof of Theorem
\ref{thm:main result for measures}}

First, let us provide the definition of the Lyapunov dimension $\dim_{L}(\Phi,p)$.
Recall that we assume that $\chi_{1}<...<\chi_{d}$, and set
\[
m(\Phi,p):=\max\left\{ 0\le j\le d\::\:\chi_{1}+...+\chi_{j}\le H(p)\right\} .
\]
With $m=m(\Phi,p)$, the Lyapunov dimension is defined by
\[
\dim_{L}(\Phi,p):=\begin{cases}
m+\frac{H(p)-\chi_{1}-...-\chi_{m}}{\chi_{m+1}} & ,\text{ if }m<d\\
d\frac{H(p)}{\chi_{1}+...+\chi_{d}} & ,\text{ if }m=d
\end{cases}.
\]

For the proof of the theorem we shall need the following two simple
lemmas, whose proof we omit.
\begin{lem}
\label{lem:max of f on S}Let $1\le j_{1}<...<j_{s}\le d$, write
\[
Y:=\left\{ (y_{1},...,y_{s})\in\mathbb{R}^{s}\::\:0\le y_{k}\le\chi_{j_{k}}\text{ for }1\le k\le s\text{ and }\sum_{k=1}^{s}y_{k}\le H(p)\right\} ,
\]
and let $f:Y\rightarrow[0,\infty)$ be with $f(y)=\sum_{k=1}^{s}y_{k}/\chi_{j_{k}}$
for $(y_{1},...,y_{s})=y\in Y$. Suppose that $H(p)<\sum_{k=1}^{s}\chi_{j_{k}}$,
and let $0\le m<s$ be with $\sum_{k=1}^{m}\chi_{j_{k}}\le H(p)<\sum_{k=1}^{m+1}\chi_{j_{k}}$.
Then
\[
\underset{y\in Y}{\max}\:f(y)=m+\frac{H(p)-\chi_{j_{1}}-...-\chi_{j_{m}}}{\chi_{j_{m+1}}},
\]
and
\[
\tilde{y}:=(\chi_{j_{1}},...,\chi_{j_{m}},H(p)-\chi_{j_{1}}-...-\chi_{j_{m}},0,...,0)
\]
is the unique point in $Y$ which satisfies $f(\tilde{y})=\underset{y\in Y}{\max}\:f(y)$.
\end{lem}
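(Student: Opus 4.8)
The plan is to treat this as an elementary constrained optimization problem. Since $f$ is linear and $Y$ is a compact convex polytope, a maximum exists; the assertion is that the unique maximizer is the ``greedy'' point $\tilde y$. The structural fact driving everything is that the coefficients of $f$ satisfy $1/\chi_{j_1} > 1/\chi_{j_2} > \cdots > 1/\chi_{j_s} > 0$, because $j_1 < \cdots < j_s$ and (by our standing assumption) $\chi_1 < \cdots < \chi_d$. Thus, to maximize $f$ one should place as much mass as the box constraints $y_k \le \chi_{j_k}$ allow into the earliest coordinates, until the budget $\sum_k y_k \le H(p)$ is exhausted --- which is exactly the description of $\tilde y$.

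First I would check that $\tilde y \in Y$ and compute $f(\tilde y) = m + (H(p) - \chi_{j_1} - \cdots - \chi_{j_m})/\chi_{j_{m+1}}$ directly; here $\sum_{k=1}^m \chi_{j_k} \le H(p)$ gives $\tilde y_{m+1} \ge 0$ and $H(p) < \sum_{k=1}^{m+1} \chi_{j_k}$ gives $\tilde y_{m+1} \le \chi_{j_{m+1}}$. Next, for an arbitrary $y \in Y$ I would introduce the partial sums $S_k := \sum_{l=1}^k y_l$ (with $S_0 := 0$) and observe that $S_k \le \min\{\sum_{l=1}^k \chi_{j_l},\, H(p)\}$ for every $k$, the first bound coming from the box constraints and the second from $S_k \le S_s \le H(p)$. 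A short case check identifies $\min\{\sum_{l=1}^k \chi_{j_l},\, H(p)\}$ with the $k$-th partial sum $\tilde S_k$ of $\tilde y$ (using $\sum_{l=1}^m\chi_{j_l}\le H(p)<\sum_{l=1}^{m+1}\chi_{j_l}$ to split the cases $k\le m$ and $k\ge m+1$). Writing $c_k := 1/\chi_{j_k}$ and summing by parts,
\[
f(y) = \sum_{k=1}^s c_k (S_k - S_{k-1}) = \sum_{k=1}^{s-1} (c_k - c_{k+1}) S_k + c_s S_s,
\]
and since $c_k - c_{k+1} > 0$ and $c_s > 0$, replacing each $S_k$ by the larger quantity $\tilde S_k$ only increases the right-hand side; as the same identity applied to $\tilde y$ gives $f(\tilde y) = \sum_{k=1}^{s-1}(c_k - c_{k+1})\tilde S_k + c_s \tilde S_s$, we conclude $f(y) \le f(\tilde y)$, hence $\max_{y \in Y} f(y) = f(\tilde y)$.

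For uniqueness, equality $f(y) = f(\tilde y)$ in the above forces $S_k = \tilde S_k$ for all $1 \le k \le s$, since every weight $c_k - c_{k+1}$ and $c_s$ is \emph{strictly} positive --- this is precisely where the strict inequalities $\chi_1 < \cdots < \chi_d$ enter. But then $y_k = S_k - S_{k-1} = \tilde S_k - \tilde S_{k-1} = \tilde y_k$ for each $k$, so $y = \tilde y$. I do not anticipate a genuine obstacle here; the only points requiring care are the identification of $\tilde S_k$ with $\min\{\sum_{l=1}^k \chi_{j_l},\, H(p)\}$ and the bookkeeping in the degenerate cases $m = 0$ and $s = 1$, where some of the displayed sums are empty and one should keep the convention $S_0 = 0$ in mind.
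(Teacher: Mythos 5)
Your proof is correct, and since the paper explicitly omits a proof of this lemma (describing it as simple), there is no in-text argument to compare against. Your greedy/Abel-summation approach is the natural one for this linear program: the identification of $\tilde S_k$ with $\min\bigl\{\sum_{l=1}^k\chi_{j_l},\,H(p)\bigr\}$ is verified correctly in both ranges $k\le m$ and $k\ge m+1$, the strict inequalities $\chi_{j_1}<\cdots<\chi_{j_s}$ (inherited from the standing assumption $\chi_1<\cdots<\chi_d$) give strictly positive weights $c_k-c_{k+1}$ and $c_s$ in the Abel rearrangement, and this strictness is exactly what drives both the maximality and the uniqueness; the degenerate cases $m=0$ and $s=1$ are handled by the convention $S_0=0$ as you note.
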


\begin{lem}
\label{lem:ub on min=00007Bdim_LY,d=00007D}For each $0\le m<d$ we
have,
\[
m+\frac{H(p)-\chi_{1}-...-\chi_{m}}{\chi_{m+1}}\ge\min\left\{ \dim_{L}(\Phi,p),d\right\} .
\]
\end{lem}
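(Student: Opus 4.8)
The plan is to reduce everything to an elementary analysis of the one-variable function
\[
g(m) := m + \frac{H(p) - (\chi_1 + \cdots + \chi_m)}{\chi_{m+1}}, \qquad 0 \le m < d.
\]
Writing $S_j := \chi_1 + \cdots + \chi_j$ (with $S_0 = 0$) and $m_0 := m(\Phi,p)$, and recalling that $0 < \chi_1 < \cdots < \chi_d$ while $H(p) \ge 0$, the assertion to prove becomes the single inequality $\min_{0 \le m < d} g(m) \ge \min\{\dim_L(\Phi,p), d\}$.

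First I would record the one computation the argument needs, namely that for $0 \le m \le d - 2$,
\[
g(m+1) - g(m) = \frac{(\chi_{m+2} - \chi_{m+1})\,(S_{m+1} - H(p))}{\chi_{m+1}\chi_{m+2}};
\]
this is a routine rearrangement (substitute $S_{m+1} = S_m + \chi_{m+1}$ and simplify), and since the $\chi_j$ are positive and strictly increasing it shows that $g(m+1) - g(m)$ has the same sign as $S_{m+1} - H(p)$. I expect this step to be the crux, even though it is only bookkeeping: once the monotonicity pattern of $g$ is pinned down, the rest is a short case split.

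Next I would split on $m_0$. If $m_0 < d$: by definition of $m_0$ we have $S_j \le H(p)$ for $j \le m_0$ and $S_j > H(p)$ for $m_0 < j \le d$, so the sign formula shows $g$ is non-increasing on $\{0, \ldots, m_0\}$ and non-decreasing on $\{m_0, \ldots, d-1\}$, whence $\min_{0 \le m < d} g(m) = g(m_0)$. Since $0 \le H(p) - S_{m_0} < \chi_{m_0+1}$ we get $g(m_0) \in [m_0, m_0 + 1) \subseteq [0, d)$, and $g(m_0)$ is exactly $\dim_L(\Phi,p)$ by its definition, so $\min\{\dim_L(\Phi,p), d\} = g(m_0)$ and the desired inequality holds (with equality at $m = m_0$). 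If $m_0 = d$: then $S_d \le H(p)$, so $\dim_L(\Phi,p) = d\,H(p)/S_d \ge d$ and the right-hand side is $d$; and for each $0 \le m < d$, using $H(p) - S_m \ge S_d - S_m = \chi_{m+1} + \cdots + \chi_d$ together with $\chi_k \ge \chi_{m+1}$ for $k \ge m+1$,
\[
g(m) \ge m + \frac{\chi_{m+1} + \cdots + \chi_d}{\chi_{m+1}} \ge m + 1 + (d - m - 1) = d.
\]

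The only thing to watch, and it is not a genuine obstacle, is the degenerate ranges — $m_0 = 0$ or $m_0 = d-1$, where one of the two monotone stretches of $g$ is empty — and the borderline equalities in the definition of $m_0$; all of these are absorbed by the weak inequalities above. (Lemma \ref{lem:max of f on S} could be invoked to repackage the case $m_0 < d$, but the direct argument sketched here is self-contained and I would keep it.)
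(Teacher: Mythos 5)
Your proof is correct. The difference formula $g(m+1)-g(m)=\frac{(\chi_{m+2}-\chi_{m+1})(S_{m+1}-H(p))}{\chi_{m+1}\chi_{m+2}}$ checks out, the monotonicity/case split cleanly identifies $g(m_0)$ as the minimum and equates it with $\min\{\dim_L(\Phi,p),d\}$ when $m_0<d$, and the $m_0=d$ estimate correctly gives $g(m)\ge d$ using $H(p)\ge S_d$ and $\chi_k\ge\chi_{m+1}$ for $k\ge m+1$. The paper explicitly omits its proof of this lemma (it calls it one of "two simple lemmas, whose proof we omit"), so there is no in-paper argument to compare against; your elementary argument is the natural one and supplies the missing detail.
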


We can now prove Theorem \ref{thm:main result for measures}, whose
statement we first recall.
\begin{thm*}
Suppose that,
\begin{enumerate}
\item $\chi_{j}<\chi_{j+1}$ for each $1\le j<d$;
\item $\Phi_{j}$ is exponentially separated for each $1\le j\le d$;
\item the linear parts of $\Phi$ are contained in a $1$-dimensional subgroup.
\end{enumerate}
Then $\dim\mu=\min\left\{ \dim_{L}(\Phi,p),d\right\} $.
\end{thm*}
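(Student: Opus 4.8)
The plan is to induct on $d$, the case $d=1$ being Hochman's theorem \cite{Ho1} (when $d=1$ conditions (1) and (3) are vacuous, and $\min\{\dim_{L}(\Phi,p),1\}=\min\{1,H(p)/\chi_{1}\}$). So fix $d\ge2$ and assume the theorem in all dimensions below $d$; this applies in particular to $\pi_{J}\mu$ for every proper $J\subsetneq[d]$, as $\pi_{J}\mu$ is a self-affine measure on $\mathbb{R}^{|J|}$ again satisfying conditions (1)--(3). The upper bound $\dim\mu\le\min\{\dim_{L}(\Phi,p),d\}$ is classical \cite{JPS}; alternatively it follows from the Ledrappier--Young formula (Theorem \ref{thm:LY formula}), which applied to $\mu$ along the flag $\emptyset\subset\{1\}\subset\dots\subset[d]$ gives $\dim\mu=\sum_{b=1}^{d}y_{b}/\chi_{b}$ with $y_{b}:=\mathrm{H}_{[b-1]}-\mathrm{H}_{[b]}$, so that by Remark \ref{rem:after LY formula} and $\sum_{b}y_{b}=H(p)-\mathrm{H}_{[d]}\le H(p)$ the vector $(y_{1},\dots,y_{d})$ lies in the polytope $Y$ of Lemma \ref{lem:max of f on S} (with $s=d$, $j_{k}=k$); Lemma \ref{lem:max of f on S} (or the trivial bound $\dim\mu\le d$ when $H(p)\ge\chi_{1}+\dots+\chi_{d}$) then yields the upper bound. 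It remains to prove $\dim\mu\ge\min\{\dim_{L}(\Phi,p),d\}$.

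First I would treat the case in which $\dim\pi_{J}\mu<|J|$ for some proper $J$. If $\dim\pi_{[d-1]}\mu<d-1$, then by the induction hypothesis $\dim\pi_{[d-1]}\mu=\dim_{L}(\Phi_{[d-1]},p)$, which has the form $m'+\frac{H(p)-\chi_{1}-\dots-\chi_{m'}}{\chi_{m'+1}}$ with $m'<d-1<d$, so Lemma \ref{lem:ub on min=00007Bdim_LY,d=00007D} applied to $\Phi$ with this $m'$ gives $\dim\mu\ge\dim\pi_{[d-1]}\mu\ge\min\{\dim_{L}(\Phi,p),d\}$. If instead $\dim\pi_{[d-1]}\mu=d-1$, then by the upper bound $H(p)\ge\chi_{1}+\dots+\chi_{d-1}$, whence $\dim_{L}(\Phi_{J'},p)\ge|J'|$ and so (induction) $\dim\pi_{J'}\mu=|J'|$ for every $J'\subseteq[d-1]$; in particular the deficient $J$ contains $d$. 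Writing $J=\{j_{1}<\dots<j_{s}=d\}$ and $J'=J\setminus\{d\}$, the Ledrappier--Young formula applied to $\pi_{J'}\mu$ together with $\dim\pi_{J'}\mu=s-1$ forces $\mathrm{H}_{\{j_{1},\dots,j_{b}\}}=H(p)-\chi_{j_{1}}-\dots-\chi_{j_{b}}$ for $b\le s-1$; applying it to $\pi_{J}\mu$ and comparing with $\dim\pi_{J}\mu=\dim_{L}(\Phi_{J},p)$ then forces $\mathrm{H}_{J}=0$, hence $\mathrm{H}_{[d]}\le\mathrm{H}_{J}=0$, and (using also $\mathrm{H}_{[d-1]}=H(p)-\chi_{1}-\dots-\chi_{d-1}$) $\dim\mu=(d-1)+\frac{\mathrm{H}_{[d-1]}-\mathrm{H}_{[d]}}{\chi_{d}}=(d-1)+\frac{H(p)-\chi_{1}-\dots-\chi_{d-1}}{\chi_{d}}\ge\min\{\dim_{L}(\Phi,p),d\}$ by Lemma \ref{lem:ub on min=00007Bdim_LY,d=00007D}.

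It therefore remains to treat the case where $\dim\pi_{J}\mu=|J|$ for every proper $J\subsetneq[d]$; then $\dim\pi_{[d-1]}\mu=d-1$, so Lemma \ref{lem:lim of ent wrt non-conf part} gives $\frac{1}{n}H(\mu,\mathcal{E}_{n})\to\kappa$. If $\dim\mu=d$ we are done, so assume $\dim\mu<d$, which places us in the hypotheses of Theorem \ref{thm:lim of cond ent =00003D 0}. Choose $c\in(0,1)$ and an infinite $\mathrm{Q}\subset\mathbb{Z}_{>0}$ witnessing the exponential separation of every $\Phi_{j}$ simultaneously, and fix $M>1$ large enough (depending only on $c$ and $\chi_{1}$). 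For $n\in\mathrm{Q}$ and distinct $u_{1},u_{2}\in\Lambda^{n}$ with $A_{\varphi_{u_{1}}}=A_{\varphi_{u_{2}}}$ one has $|\pi_{j}(\varphi_{u_{1}}(0)-\varphi_{u_{2}}(0))|=\rho(\varphi_{u_{1},j},\varphi_{u_{2},j})\ge c^{n}$ for every $j$, so $\varphi_{u_{1}}(0)$ and $\varphi_{u_{2}}(0)$ lie in distinct atoms of $\mathrm{ev}_{0}^{-1}\mathcal{E}_{Mn}$; since maps with distinct linear parts lie in distinct atoms of $\mathcal{L}_{\mathrm{Aff}(d)}$ and (Remark \ref{rem:exp set vs exa overl}) the $\{\varphi_{u}\}_{u\in\Lambda^{n}}$ are pairwise distinct, the partition $\mathrm{ev}_{0}^{-1}\mathcal{E}_{Mn}\vee\mathcal{L}_{\mathrm{Aff}(d)}$ separates all of them, whence $H(\nu^{(n)},\mathrm{ev}_{0}^{-1}\mathcal{E}_{Mn}\vee\mathcal{L}_{\mathrm{Aff}(d)})=nH(p)$ for $n\in\mathrm{Q}$. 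Applying the conditional entropy formula over the coarser partition $\mathrm{ev}_{0}^{-1}\mathcal{E}_{n}$, dividing by $n$, and letting $n\to\infty$ along $\mathrm{Q}$, the conditional term vanishes by Theorem \ref{thm:lim of cond ent =00003D 0}, so $H(p)=\lim_{n\in\mathrm{Q}}\frac{1}{n}H(\mathrm{ev}_{0}\nu^{(n)},\mathcal{E}_{n})$, which equals $\kappa$ by Lemmas \ref{lem:ent of ev nu is close to ent of mu} and \ref{lem:lim of ent wrt non-conf part}. From $H(p)=\kappa=\chi_{d}\dim\mu-\sum_{j=1}^{d-1}(\chi_{d}-\chi_{j})$ we obtain $\dim\mu=(d-1)+\frac{H(p)-\chi_{1}-\dots-\chi_{d-1}}{\chi_{d}}$, and Lemma \ref{lem:ub on min=00007Bdim_LY,d=00007D} gives $\dim\mu\ge\min\{\dim_{L}(\Phi,p),d\}$, completing the induction.

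Since the hard analytic inputs (the entropy increase result, Theorem \ref{thm:ent inc result}, and its consequence Theorem \ref{thm:lim of cond ent =00003D 0}) are already available, the remaining work is mostly organizational. The step I expect to be the most delicate is the first case above: one must check that the Ledrappier--Young formula, read along the right flags and combined with the inductive identity $\dim\pi_{J}\mu=\min\{\dim_{L}(\Phi_{J},p),|J|\}$, pins down exactly the conditional entropies $\mathrm{H}_{J}$ one needs (in particular $\mathrm{H}_{[d]}=0$), and that the resulting value of $\dim\mu$ is compared correctly with $\min\{\dim_{L}(\Phi,p),d\}$ via Lemmas \ref{lem:max of f on S} and \ref{lem:ub on min=00007Bdim_LY,d=00007D}. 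The extraction of the identity $H(\nu^{(n)},\mathrm{ev}_{0}^{-1}\mathcal{E}_{Mn}\vee\mathcal{L}_{\mathrm{Aff}(d)})=nH(p)$ from exponential separation is the one genuinely new, though routine, computation.
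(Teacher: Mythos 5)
Your proposal follows the same three-case induction on $d$ as the paper (both projections $<$ full; some projection deficient while $\pi_{[d-1]}\mu$ has full dimension; all proper projections full), and the only genuinely analytic inputs you use are Theorems \ref{thm:lim of cond ent =00003D 0}, \ref{thm:LY formula}, Lemmas \ref{lem:ent of ev nu is close to ent of mu}, \ref{lem:lim of ent wrt non-conf part}, \ref{lem:ub on min=00007Bdim_LY,d=00007D}, \ref{lem:max of f on S}, exactly as in the paper. There is a small variation in your second case that is worth noting as an improvement: you first observe that the deficient $J$ must contain $d$ (since $\dim\pi_{[d-1]}\mu=d-1$ forces $H(p)\ge\chi_1+\dots+\chi_{d-1}$, whence every $J'\subseteq[d-1]$ has $\dim\pi_{J'}\mu=|J'|$), and then read off $\mathrm{H}_{J_b}$ inductively from the Ledrappier--Young formula applied first to $J\setminus\{d\}$ and then to $J$; the paper instead applies the formula once along $J$ and invokes the \emph{uniqueness} of the maximizer in Lemma \ref{lem:max of f on S}. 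Both pin down $\mathrm{H}_J=0$, but your route avoids the uniqueness clause of that lemma entirely.

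One real inaccuracy appears in the last case: you ``choose $c\in(0,1)$ and an infinite $\mathrm{Q}\subset\mathbb{Z}_{>0}$ witnessing the exponential separation of every $\Phi_{j}$ simultaneously.'' The hypotheses give, for each $j$, its own $c_j$ and infinite $\mathrm{Q}_j$; there is no guarantee that $\bigcap_j\mathrm{Q}_j$ is infinite, so a single $\mathrm{Q}$ working for all $j$ simultaneously cannot be assumed. Fortunately your argument does not need it: to conclude that $\varphi_{u_1}(0)$ and $\varphi_{u_2}(0)$ lie in distinct atoms of $\mathrm{ev}_0^{-1}\mathcal{E}_{Mn}$ it suffices to have $|\pi_{j_0}(\varphi_{u_1}(0)-\varphi_{u_2}(0))|\ge c^n>2\cdot 2^{-Mn\chi_{j_0}}$ in a \emph{single} coordinate $j_0$. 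So just fix one $j_0$ (say $j_0=1$, the coordinate with the smallest exponent), use its exponential-separation constants $c_1$ and $\mathrm{Q}_1$, and choose $M>-\log c_1/\chi_1$. Since having the same linear part already forces $r_{u_1,1}=r_{u_2,1}$, exponential separation of $\Phi_1$ alone yields $|\pi_1(\varphi_{u_1}(0)-\varphi_{u_2}(0))|=\rho(\varphi_{u_1,1},\varphi_{u_2,1})\ge c_1^n$ for $n\in\mathrm{Q}_1$, which is all that is needed. With this repair, your proof is correct and matches the paper's.
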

\begin{proof}
The proof is carried out by induction on $d$. Thus, assume that the
theorem holds whenever the dimension of the ambient space is strictly
less than $d$ (for $d=1$ this assumption is vacuous). Since $\dim_{L}(\Phi,p)$
is always an upper bound for $\dim\mu$, we only need to show that
$\dim\mu\ge\min\left\{ \dim_{L}(\Phi,p),d\right\} $.

Given $\emptyset\ne J\subset[d]$ recall the notation $\Phi_{J}$
from Section \ref{subsec:The-setup}. Note that the conditions of
the theorem are satisfied for the pair $(\Phi_{J},p)$, and that $\pi_{J}\mu$
is the self-affine measure corresponding to $\Phi_{J}$ and $p$.
Thus, by the induction hypothesis,
\begin{equation}
\dim\pi_{J}\mu=\min\left\{ \dim_{L}\left(\Phi_{J},p\right),|J|\right\} \text{ for }J\subset[d]\text{ with }0<|J|<d.\label{eq:by ind hyp}
\end{equation}

First suppose that $\dim\pi_{[d-1]}\mu<d-1$, and set $L:=\dim_{L}\left(\Phi_{[d-1]},p\right)$.
From (\ref{eq:by ind hyp}) and $\dim\pi_{[d-1]}\mu<d-1$, it follows
that $\dim\pi_{[d-1]}\mu=L$. In particular $L<d-1$, and so by the
definition of the Lyapunov dimension we have $L=\dim_{L}(\Phi,p)$.
Thus,
\[
\dim\mu\ge\dim\pi_{[d-1]}\mu=\dim_{L}(\Phi,p),
\]
which proves the theorem in the present case.

Next, suppose that $\dim\pi_{[d-1]}\mu=d-1$ and there exists a proper
nonempty subset $J$ of $[d]$ so that $\dim\pi_{J}\mu<|J|$. Set
$L:=\dim_{L}\left(\Phi_{J},p\right)$, write $s:=|J|$, and let $1\le j_{1}<...<j_{s}\le d$
be with $J=\{j_{1},...,j_{s}\}$. From (\ref{eq:by ind hyp}) and
$\dim\pi_{J}\mu<s$, it follows that $\dim\pi_{J}\mu=L$. In particular
$L<s$, and so by the definition of the Lyapunov dimension there exists
$0\le m<s$ so that
\[
\sum_{k=1}^{m}\chi_{j_{k}}\le H(p)<\sum_{k=1}^{m+1}\chi_{j_{k}}\text{ and }L=m+\frac{H(p)-\chi_{j_{1}}-...-\chi_{j_{m}}}{\chi_{j_{m+1}}}.
\]

For each $1\le k\le s$ set $J_{k}:=\{j_{1},...,j_{k}\}$ and $\Delta_{k}:=\mathrm{H}_{J_{k-1}}-\mathrm{H}_{J_{k}}$,
where $J_{0}:=\emptyset$ and $\mathrm{H}_{J_{0}},...,\mathrm{H}_{J_{s}}$
are defined in Section \ref{subsec:Ledrappier-Young-formula}. By
Theorem \ref{thm:LY formula},
\[
\sum_{k=1}^{s}\frac{\Delta_{k}}{\chi_{j_{k}}}=\dim\pi_{J}\mu=L=m+\frac{H(p)-\chi_{j_{1}}-...-\chi_{j_{m}}}{\chi_{j_{m+1}}}.
\]
By Remark \ref{rem:after LY formula}, we have $0\le\Delta_{k}\le\chi_{j_{k}}$
for $1\le k\le s$. Since $\mathrm{H}_{J_{0}}=H(p)$,
\begin{equation}
H(p)=\mathrm{H}_{J}+\sum_{k=1}^{s}\Delta_{k}\ge\sum_{k=1}^{s}\Delta_{k}.\label{eq:H(p)=00003DH_J_d-1+Sum of dim}
\end{equation}
By combining all of this with Lemma \ref{lem:max of f on S}, it follows
that $\Delta_{k}=\chi_{j_{k}}$ for $1\le k\le m$, $\Delta_{m+1}=H(p)-\sum_{k=1}^{m}\chi_{j_{k}}$,
and $\Delta_{k}=0$ for $m+1<k\le s$. Hence $H(p)=\sum_{k=1}^{s}\Delta_{k}$,
and so $\mathrm{H}_{J}=0$ by (\ref{eq:H(p)=00003DH_J_d-1+Sum of dim}).
Thus, since $\pi_{J}^{-1}(\mathcal{B})\subset\mathcal{B}$,
\[
\mathrm{H}_{[d]}=H\left(\beta,\mathcal{P}\mid\Pi^{-1}\mathcal{B}\right)\le H\left(\beta,\mathcal{P}\mid\Pi^{-1}\mathcal{B}_{J}\right)=\mathrm{H}_{J}=0.
\]

From $\dim\pi_{[d-1]}\mu=d-1$, Theorem \ref{thm:LY formula}, and
Remark \ref{rem:after LY formula},
\[
\mathrm{H}_{[k-1]}-\mathrm{H}_{[k]}=\chi_{k}\text{ for }1\le k<d.
\]
Thus, from $\mathrm{H}_{[d]}=0$ and $\mathrm{H}_{[0]}=H(p)$,
\[
\mathrm{H}_{[d-1]}-\mathrm{H}_{[d]}=H(p)-\sum_{k=1}^{d-1}\chi_{k}.
\]
From Theorem \ref{thm:LY formula} and by the last two equations,
\[
\dim\mu=\sum_{k=1}^{d}\frac{\mathrm{H}_{[k-1]}-\mathrm{H}_{[k]}}{\chi_{k}}=d-1+\frac{H(p)-\chi_{1}-...-\chi_{d-1}}{\chi_{d}}.
\]
This together with Lemma \ref{lem:ub on min=00007Bdim_LY,d=00007D}
completes the proof of the theorem also in the present case.

Finally, suppose that $\dim\pi_{J}\mu=|J|$ for each proper subset
$J$ of $[d]$. If $\dim\mu=d$ then there is nothing to prove, and
so we may assume that $\dim\mu<d$. Since the IFSs $\Phi_{1},...,\Phi_{d}$
are exponentially separated, there exists $M>1$ so that 
\[
\frac{1}{n}H\left(\nu^{(n)},\mathrm{ev}_{0}^{-1}(\mathcal{E}_{Mn})\vee\mathcal{L}_{\mathrm{Aff}(d)}\right)=H(p)
\]
for infinitely many integers $n\ge1$. From this and by Theorem \ref{thm:lim of cond ent =00003D 0},
it follows that there exists an increasing sequence $\{n_{k}\}_{k\ge1}\subset\mathbb{Z}_{>0}$
such that,
\[
\underset{k}{\lim}\:\frac{1}{n_{k}}H\left(\nu^{(n_{k})},\mathrm{ev}_{0}^{-1}(\mathcal{E}_{n_{k}})\right)=H(p).
\]
Thus, from Lemmata \ref{lem:ent of ev nu is close to ent of mu} and
\ref{lem:lim of ent wrt non-conf part}, we get $\kappa=H(p)$. By
dividing this equality by $\chi_{d}$ and recalling that $\kappa=\chi_{d}\dim\mu-\sum_{j=1}^{d-1}(\chi_{d}-\chi_{j})$,
we obtain
\[
\dim\mu=d-1+\frac{H(p)-\chi_{1}-...-\chi_{d-1}}{\chi_{d}}.
\]
This together with Lemma \ref{lem:ub on min=00007Bdim_LY,d=00007D}
completes the proof of the theorem.
\end{proof}

\section{\label{sec:Proof-of-main thm for sets}Proof of the main result for
self-affine sets}

The purpose of this section is to prove Theorem \ref{thm:main result for sets}.
First, let us provide the definition of the affinity dimension. We
give a simplified version of the definition, which is valid only in
the case of diagonal systems. For the general definition we refer
the reader to \cite[Proposition 4.1]{falconer1988hausdorff}.

Write $\mathrm{S}_{d}$ for the symmetric group over the set $[d]$.
For $\sigma\in\mathrm{S}_{d}$ and $s\ge0$ define $\phi_{\sigma}^{s}:\Lambda\rightarrow\mathbb{R}_{>0}$
by
\[
\phi_{\sigma}^{s}(i):=\begin{cases}
|r_{i,\sigma(1)}|\cdot...\cdot|r_{i,\sigma(m)}|\cdot|r_{i,\sigma(m+1)}|^{s-m} & \text{ if }s<d\\
|r_{i,1}\cdot...\cdot r_{i,d}|^{s/d} & \text{ if }s\ge d
\end{cases}\text{ for }i\in\Lambda,
\]
where $m:=\left\lfloor s\right\rfloor $. From \cite[Theorem 2.1]{MR3336332},
it follows that in our setting the affinity dimension $\dim_{A}\Phi$
can be defined as the unique $s\ge0$ for which,
\begin{equation}
\max_{\sigma\in\mathrm{S}_{d}}\sum_{i\in\Lambda}\phi_{\sigma}^{s}(i)=1.\label{eq:def of aff dim}
\end{equation}

We shall need the following three simple lemmas. The first lemma,
whose proof we omit, follows easily from the central limit theorem.
In what follows, given a finite nonempty index set $I$, a probability
vector $q:=(q_{i})_{i\in I}$, and $I'\subset I$, we write $q(I'):=\sum_{i\in I'}q_{i}$.
\begin{lem}
\label{lem:by CLT}Let $p=(p_{i})_{i\in\Lambda}$ be a strictly positive
probability vector, let $(c_{i})_{i\in\Lambda}=:c\in\mathbb{R}^{\Lambda}$
be with $c_{i}\ne0$ for some $i\in\Lambda$, and let $\epsilon>0$.
Then for every $n\ge N(p,c,\epsilon)\ge1$,
\[
p^{\times n}\left\{ i_{1}...i_{n}\in\Lambda^{n}\::\:\sum_{k=1}^{n}c_{i_{k}}=0\right\} <\epsilon,
\]
where $p^{\times n}:=(p_{u})_{u\in\Lambda^{n}}$.
\end{lem}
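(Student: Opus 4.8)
The plan is to read the quantity probabilistically and then, via a symmetrization trick, reduce it to the plain central limit theorem applied to a \emph{centred} random walk. Let $X_{1},X_{2},\dots$ be i.i.d.\ real random variables with common law $\sum_{i\in\Lambda}p_{i}\delta_{c_{i}}$, and set $S_{n}:=X_{1}+\dots+X_{n}$. Pushing $p^{\times n}$ forward under the map $(i_{1},\dots,i_{n})\mapsto\sum_{k=1}^{n}c_{i_{k}}$ shows that $p^{\times n}\{i_{1}\dots i_{n}\in\Lambda^{n}:\sum_{k=1}^{n}c_{i_{k}}=0\}=\mathbb{P}(S_{n}=0)$, so it suffices to prove $\mathbb{P}(S_{n}=0)\to0$ as $n\to\infty$. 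Write $\sigma^{2}:=\mathrm{Var}(X_{1})$.

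First I would dispose of the degenerate case $\sigma^{2}=0$. Since $p$ is strictly positive, $\sigma^{2}=0$ forces all the $c_{i}$ to equal a common value $c$, and $c\ne0$ by hypothesis; hence $\sum_{k=1}^{n}c_{i_{k}}=nc\ne0$ for every $n\ge1$, so the probability in question is identically $0$ and there is nothing to prove.

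Now assume $\sigma^{2}>0$. The point of symmetrizing is that $S_{n}$ is centred at $n\,\mathbb{E}(X_{1})$, which may be nonzero, so a direct application of the CLT to $S_{n}$ would only yield a local-limit-type statement about $\mathbb{P}(S_{n}=0)$; passing to a centred walk removes this difficulty. Let $(X_{k}')_{k\ge1}$ be an independent copy of $(X_{k})_{k\ge1}$, put $Y_{k}:=X_{k}-X_{k}'$ and $T_{n}:=Y_{1}+\dots+Y_{n}$, and let $S_{n}'$ be the sum of $X_{1}',\dots,X_{n}'$. The $Y_{k}$ are i.i.d., bounded, with mean $0$ and variance $2\sigma^{2}>0$, so by the central limit theorem $T_{n}/\sqrt{n}$ converges weakly to $\gamma_{2\sigma^{2}}$. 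Since $\gamma_{2\sigma^{2}}$ has no atoms and $\{0\}\subset[-\delta,\delta]$ for every $\delta>0$, weak convergence gives $\limsup_{n}\mathbb{P}(T_{n}=0)\le\limsup_{n}\mathbb{P}(T_{n}/\sqrt{n}\in[-\delta,\delta])=\gamma_{2\sigma^{2}}([-\delta,\delta])$, and letting $\delta\downarrow0$ yields $\mathbb{P}(T_{n}=0)\to0$.

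Finally I would close the loop with the elementary identity and bound $\mathbb{P}(T_{n}=0)=\mathbb{P}(S_{n}=S_{n}')=\sum_{x}\mathbb{P}(S_{n}=x)^{2}\ge\mathbb{P}(S_{n}=0)^{2}$, which uses only that $S_{n}$ and $S_{n}'$ are independent with the same law. Combining, $\mathbb{P}(S_{n}=0)\le\mathbb{P}(T_{n}=0)^{1/2}\to0$, so $\mathbb{P}(S_{n}=0)<\epsilon$ once $n\ge N(p,c,\epsilon)$, as required. The only mildly delicate point is the symmetrization step, which is precisely what lets one invoke the ordinary CLT in place of a local limit theorem; everything else, including the degenerate case, is routine.
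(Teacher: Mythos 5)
Your proof is correct, and it realises exactly what the paper hints at (the paper omits the proof, remarking only that the lemma ``follows easily from the central limit theorem''). Every step checks out: the probabilistic reformulation as $\mathbb{P}(S_n=0)$, the disposal of the degenerate case $\sigma^2=0$ using strict positivity of $p$, the CLT applied to the symmetrized walk $T_n=S_n-S_n'$ together with the Portmanteau theorem (for the closed set $[-\delta,\delta]$) to get $\mathbb{P}(T_n=0)\to 0$, and the identity $\mathbb{P}(T_n=0)=\sum_x\mathbb{P}(S_n=x)^2\ge\mathbb{P}(S_n=0)^2$. One small remark: the symmetrization is a clean way to avoid discussing the mean, but it is not strictly necessary. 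If $\mathbb{E}X_1\ne 0$ then $\mathbb{P}(S_n=0)\to 0$ already by Chebyshev (the walk drifts linearly while fluctuations are $O(\sqrt n)$), and if $\mathbb{E}X_1=0$ then the Portmanteau argument you gave applies directly to $S_n/\sqrt n$; alternatively the Berry--Esseen theorem, which the paper already uses elsewhere, gives $\mathbb{P}(S_n=0)=O(n^{-1/2})$ at once. Your route has the incidental advantage of actually proving the stronger anti-concentration statement $\max_x\mathbb{P}(S_n=x)\to 0$.
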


The second lemma, whose proof we also omit, follows easily from the
definition of the Lyapunov dimension.
\begin{lem}
\label{lem:cont of dim_L}Let $p=(p_{i})_{i\in\Lambda}$ be a probability
vector. Then for every $\epsilon>0$ there exists $\delta=\delta(p,\epsilon)>0$
so that the following holds. Let $n\ge1$ and let $q=(q_{u})_{u\in\Lambda^{n}}$
be a probability vector. Suppose that $\frac{1}{n}H(q)\ge H(p)-\delta$
and,
\[
\left|\frac{1}{n}\sum_{u\in\Lambda^{n}}q_{u}\log|r_{u,j}|-\sum_{i\in\Lambda}p_{i}\log|r_{i,j}|\right|\le\delta\text{ for each }1\le j\le d.
\]
Then,
\[
\dim_{L}\left(\{\varphi_{u}\}_{u\in\Lambda^{n}},q\right)>\dim_{L}\left(\Phi,p\right)-\epsilon.
\]
\end{lem}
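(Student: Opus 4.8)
The plan is to reduce the lemma to two elementary features of the Lyapunov dimension, regarded as a function of the entropy and the Lyapunov exponents. For $H\ge0$ and reals $0<t_{1}<\dots<t_{d}$, let $\Delta(H;t_{1},\dots,t_{d})$ denote the right-hand side of the definition of $\dim_{L}$ with $t_{j}$ in place of $\chi_{j}$; then for any diagonal affine IFS whose Lyapunov exponents with respect to a probability vector are, after reordering, strictly increasing, its Lyapunov dimension equals the value of $\Delta$ at those exponents. The first feature I would record is that $\Delta$ is homogeneous of degree zero, $\Delta(cH;ct_{1},\dots,ct_{d})=\Delta(H;t_{1},\dots,t_{d})$ for every $c>0$: the index $m$ from the definition is unchanged under such a scaling, and the explicit formula is visibly invariant. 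The second is that $\Delta$ is continuous on $\{H\ge0,\ 0<t_{1}<\dots<t_{d}\}$ and non-decreasing in the $H$-variable. Continuity away from the hyperplanes $H=t_{1}+\dots+t_{j}$ is immediate from the explicit formula, whose denominator ($t_{m+1}$, or $t_{1}+\dots+t_{d}$ when $m=d$) stays positive; across such a hyperplane the two relevant branches agree, because $m+\frac{H-t_{1}-\dots-t_{m}}{t_{m+1}}$ takes the value $m+1$ at $H=t_{1}+\dots+t_{m+1}$, matching the next branch there. Monotonicity follows since each branch is increasing in $H$ with slope $1/t_{m+1}>0$ (or $d/(t_{1}+\dots+t_{d})>0$). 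In the range $H<t_{1}+\dots+t_{d}$ one could instead deduce continuity from Lemma \ref{lem:max of f on S}, which presents $\Delta(H;t)$ as the maximum of $y\mapsto\sum_{j}y_{j}/t_{j}$ over a polytope varying continuously with $(H,t)$.

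With these in hand, I would choose $\delta=\delta(p,\epsilon)>0$ small enough that $\delta<\chi_{1}$ and $4\delta<\min_{1\le j<d}(\chi_{j+1}-\chi_{j})$, and let $n$ and $q=(q_{u})_{u\in\Lambda^{n}}$ be as in the statement. Writing $a_{j}:=-\frac{1}{n}\sum_{u\in\Lambda^{n}}q_{u}\log|r_{u,j}|$, the hypothesis says $|a_{j}-\chi_{j}|\le\delta$ for $1\le j\le d$, and the choice of $\delta$ forces $0<a_{1}<a_{2}<\dots<a_{d}$. Hence the IFS $\{\varphi_{u}\}_{u\in\Lambda^{n}}$ has the strictly increasing Lyapunov exponents $na_{1}<\dots<na_{d}$ with respect to $q$, no reordering is needed, and by degree-zero homogeneity with $c=1/n$,
\[
\dim_{L}\big(\{\varphi_{u}\}_{u\in\Lambda^{n}},q\big)=\Delta\big(H(q);na_{1},\dots,na_{d}\big)=\Delta\big(\tfrac{1}{n}H(q);a_{1},\dots,a_{d}\big).
\]

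Finally, if $H(p)=0$ the assertion is trivial since $\dim_{L}\ge0$, so I would assume $H(p)>0$ and also take $\delta<H(p)$. Then $\frac{1}{n}H(q)\ge H(p)-\delta>0$, so monotonicity of $\Delta$ in its first argument gives
\[
\dim_{L}\big(\{\varphi_{u}\}_{u\in\Lambda^{n}},q\big)\ge\Delta\big(H(p)-\delta;a_{1},\dots,a_{d}\big).
\]
The point $(H(p);\chi_{1},\dots,\chi_{d})$ lies in the open set on which $\Delta$ is continuous, and $(H(p)-\delta,a_{1},\dots,a_{d})$ is within distance $O(\delta)$ of it; shrinking $\delta$ depending only on $p$ and $\epsilon$ therefore makes the last quantity exceed $\Delta(H(p);\chi_{1},\dots,\chi_{d})-\epsilon=\dim_{L}(\Phi,p)-\epsilon$, as required. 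The only slightly delicate points are keeping the perturbed exponents strictly ordered and positive (so that $\Delta$ applies without reordering) and checking that $\Delta$ is continuous across its breakpoints $H=t_{1}+\dots+t_{j}$; both are routine computations from the definition of the Lyapunov dimension, and I do not expect a serious obstacle.
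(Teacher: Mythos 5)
Your argument is correct and is surely what the author had in mind (the paper omits the proof). Viewing $\dim_{L}$ as a function $\Delta(H;t_{1},\dots,t_{d})$ of the entropy and the sorted Lyapunov exponents, you isolate exactly the three properties that make the lemma work: degree-zero homogeneity, continuity, and monotonicity in $H$. The homogeneity step $\Delta(H(q);na_{1},\dots,na_{d})=\Delta(\tfrac{1}{n}H(q);a_{1},\dots,a_{d})$ is the key reduction, and your checks of continuity across the breakpoints $H=t_{1}+\dots+t_{j}$ and of $H$-monotonicity are sound.

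One caveat is worth flagging. Your choice of $\delta$ requires $4\delta<\min_{j}(\chi_{j+1}-\chi_{j})$, which presupposes $\chi_{1}<\dots<\chi_{d}$ strictly. That ordering is the standing assumption of Sections 2--5, but Lemma \ref{lem:cont of dim_L} lives in Section \ref{sec:Proof-of-main thm for sets}, outside that scope, and is applied there with $p=(\phi^{s}(i))_{i}$ built from the affinity-dimension maximiser; for that $p$ one only establishes $\chi_{j}\le\chi_{m+1}\le\chi_{j'}$ for $1\le j\le m<j'\le d$, so ties among the $\chi_{j}$ (and unorderedness within $\{\chi_{1},\dots,\chi_{m}\}$) are possible. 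In those cases your $\delta$ does not exist. The fix is small and does not change the spirit of the argument: define $\Delta$ on the closed cone $\{(H,t):H\ge0,\ 0<t_{1}\le\dots\le t_{d}\}$ (the branch formulas still make sense and agree at breakpoints, so $\Delta$ remains continuous and $H$-monotone there) and precompose with the sorting map, which is itself continuous. Then no strict-separation hypothesis on the $\chi_{j}$ or on the perturbed $a_{j}$ is needed, and the same three-step argument closes the proof.
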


\begin{lem}
\label{lem:exp sep proceeds}Let $\Psi=\{\psi_{i}\}_{i\in\Lambda}$
be an affine IFS on $\mathbb{R}$ which is exponentially separated,
and let $n\ge1$ be given. Then $\{\psi_{u}\}_{u\in\Lambda^{n}}$
is also exponentially separated.
\end{lem}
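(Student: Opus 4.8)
The plan is to reduce exponential separation of the IFS $\{\psi_{u}\}_{u\in\Lambda^{n}}$ (which is an affine IFS on $\mathbb{R}$, being indexed by the finite set $\Lambda^{n}$ with maps that are affine contractions) to that of $\Psi$, via the natural identification of $(\Lambda^{n})^{M}$ with $\Lambda^{nM}$: concatenating a length-$M$ word $w=u^{(1)}\cdots u^{(M)}$ over the alphabet $\Lambda^{n}$ produces a length-$nM$ word $v\in\Lambda^{nM}$; this assignment is a bijection, it sends distinct words to distinct words, and $\psi_{u^{(1)}}\circ\cdots\circ\psi_{u^{(M)}}=\psi_{v}$. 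Hence it suffices to exhibit $c'>0$ and an infinite $\mathrm{Q}'\subset\mathbb{Z}_{>0}$ with $\rho(\psi_{v_{1}},\psi_{v_{2}})\ge(c')^{M}$ for every $M\in\mathrm{Q}'$ and all distinct $v_{1},v_{2}\in\Lambda^{nM}$.

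The one computational input is that $\rho$ is invariant under post-composition by a common invertible affine map: if $\psi_{w}(t)=\sigma t+\beta$ with $\sigma\ne0$ and $\psi_{v_{i}}(t)=s_{i}t+b_{i}$, then $\psi_{v_{i}}\circ\psi_{w}$ has linear part $s_{i}\sigma$ and translation part $s_{i}\beta+b_{i}$, so the linear parts of $\psi_{v_{1}}\circ\psi_{w}$ and $\psi_{v_{2}}\circ\psi_{w}$ agree iff $s_{1}=s_{2}$, and in that case the difference of translation parts is $b_{1}-b_{2}$; thus
\[
\rho(\psi_{v_{1}}\circ\psi_{w},\psi_{v_{2}}\circ\psi_{w})=\rho(\psi_{v_{1}},\psi_{v_{2}}).
\]
In particular, appending a common suffix $w$ to two words $v_{1},v_{2}$ leaves $\rho(\psi_{v_{1}},\psi_{v_{2}})$ unchanged.

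Now let $c>0$ and an infinite $\mathrm{Q}$ witness exponential separation of $\Psi$; we may assume $0<c<1$ (if $c\ge1$ replace it by $1/2$, since $\rho\ge c^{m}\ge1\ge2^{-m}$). Put
\[
\mathrm{Q}':=\{M\in\mathbb{Z}_{>0}\::\:\text{there exists }m\in\mathrm{Q}\text{ with }nM\le m\le2nM\}\qquad\text{and}\qquad c':=c^{2n}.
\]
Given $M\in\mathrm{Q}'$, fix $m\in\mathrm{Q}$ with $nM\le m\le2nM$, and given distinct $v_{1},v_{2}\in\Lambda^{nM}$, choose any $w\in\Lambda^{m-nM}$; then $v_{1}w,v_{2}w\in\Lambda^{m}$ are distinct, so by exponential separation of $\Psi$ and the identity above $\rho(\psi_{v_{1}},\psi_{v_{2}})=\rho(\psi_{v_{1}w},\psi_{v_{2}w})\ge c^{m}\ge c^{2nM}=(c')^{M}$, using $0<c<1$ and $m\le2nM$. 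Finally $\mathrm{Q}'$ is infinite: for each $m\in\mathrm{Q}$ with $m\ge2n$ the integer $M:=\lceil m/(2n)\rceil$ satisfies $nM\le m\le2nM$ (one has $2nM\ge m$ trivially, and $nM<m/2+n\le m$), so $M\in\mathrm{Q}'$, and these values tend to infinity as $m\to\infty$ through $\mathrm{Q}$.

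The only place any thought is needed is the choice of $\mathrm{Q}'$. One cannot take $\mathrm{Q}'=\{M:nM\in\mathrm{Q}\}$, since $\mathrm{Q}$ may contain no multiple of $n$; and one cannot take $m$ to be the least element of $\mathrm{Q}$ exceeding $nM$, since $\mathrm{Q}$ may be so sparse that this forces $m$ to grow much faster than linearly in $M$, ruining the bound $c^{m}\ge(c')^{M}$ with a fixed base. Restricting to those $M$ that admit an $m\in\mathrm{Q}$ in the window $[nM,2nM]$ keeps $\mathrm{Q}'$ infinite while controlling $m$ linearly in $M$, which is exactly what the argument needs.
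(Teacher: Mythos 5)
Your proof is correct and takes essentially the same approach as the paper: pad each pair of distinct length\nobreakdash-$nM$ words by a common word to reach some length $m\in\mathrm{Q}$ in a window of size $O(nM)$, then transfer the separation bound via the behaviour of $\rho$ under composition. The paper attaches the common word as a prefix and uses the inequality $\rho(\psi_{u_{1}},\psi_{u_{2}})\ge\rho(\psi_{w}\circ\psi_{u_{1}},\psi_{w}\circ\psi_{u_{2}})$ (valid because $\psi_{w}$ is a contraction), with $\mathrm{Q}':=\{\lfloor k/n\rfloor:k\in\mathrm{Q}\}$; you attach it as a suffix and observe the cleaner exact invariance $\rho(\psi_{v_{1}}\circ\psi_{w},\psi_{v_{2}}\circ\psi_{w})=\rho(\psi_{v_{1}},\psi_{v_{2}})$, which needs no contractivity, and define $\mathrm{Q}'$ slightly differently, but the content is the same and both yield the constant $c'=c^{2n}$.
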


\begin{proof}
By assumption, there exists $0<c<1$ and an infinite $\mathrm{Q}\subset\mathbb{Z}_{>0}$
so that $\rho(\psi_{u_{1}},\psi_{u_{2}})\ge c^{k}$ for all $k\in\mathrm{Q}$
and distinct $u_{1},u_{2}\in\Lambda^{k}$, where $\rho$ is defined
in Definition \ref{def:exp sep}. Write $\mathrm{Q}':=\left\{ \left\lfloor k/n\right\rfloor \::\:k\in\mathrm{Q}\right\} $,
which is an infinite set. Let $m\in\mathrm{Q}'$ and let $u_{1},u_{2}\in\Lambda^{mn}$
be distinct. There exits $k\in\mathrm{Q}$ with $m=\left\lfloor k/n\right\rfloor $,
which implies that $mn\le k<mn+n$. Fix some $w\in\Lambda^{k-mn}$.
Since $\psi_{w}$ is an affine contraction and since $wu_{1}$ and
$wu_{2}$ are distinct elements of $\Lambda^{k}$,
\[
\rho(\psi_{u_{1}},\psi_{u_{2}})\ge\rho(\psi_{w}\circ\psi_{u_{1}},\psi_{w}\circ\psi_{u_{2}})\ge c^{k}>c^{mn}c^{n}.
\]
This clearly shows that $\{\psi_{u}\}_{u\in\Lambda^{n}}$ is exponentially
separated, which completes the proof of the lemma.
\end{proof}
We can now prove Theorem \ref{thm:main result for sets}, whose statement
we first recall.
\begin{thm}
Suppose that,
\begin{enumerate}
\item for each $1\le j_{1}<j_{2}\le d$ there exists $i\in\Lambda$ so that
$|r_{i,j_{1}}|\ne|r_{i,j_{2}}|$;
\item $\Phi_{j}$ is exponentially separated for each $1\le j\le d$.
\end{enumerate}
Then $\dim_{H}K_{\Phi}=\min\left\{ \dim_{A}\Phi,d\right\} $.
\end{thm}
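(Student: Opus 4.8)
The plan is to deduce the theorem from Theorem~\ref{thm:main result for measures}. Since $\dim_H K_\Phi\le\min\{\dim_A\Phi,d\}$ is classical (Falconer's upper bound, together with $\dim_H K_\Phi\le d$), it suffices to fix $0<\epsilon<1$ and produce a measure supported on $K_\Phi$ of dimension at least $\min\{\dim_A\Phi,d\}-\epsilon$; if $|\Lambda|=1$ then $K_\Phi$ is a point and $\dim_A\Phi=0$, so assume $|\Lambda|\ge2$. The obstruction to applying Theorem~\ref{thm:main result for measures} to $\Phi$ itself is that its linear parts need not lie in a $1$-dimensional subgroup. I will circumvent this by passing to a \emph{homogeneous} sub-IFS of a high iterate of $\Phi$: the collection of all words in $\Lambda^n$ with a prescribed symbol-frequency vector. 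Because the diagonal matrices $A_i$ commute, all such words have the same linear part, so the $1$-dimensional subgroup condition is automatic; the heart of the argument is to choose the frequency vector so that this subsystem retains almost all of the affinity dimension while still having pairwise distinct Lyapunov exponents and exponentially separated one-dimensional factors.

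\emph{Step 1 (a near-optimal Bernoulli measure on $\Lambda$).} Put $s:=\dim_A\Phi$ if $\dim_A\Phi<d$ and $s:=d-\epsilon/4$ otherwise, so that $0\le s\le d$, $s\le\dim_A\Phi$, $m:=\lfloor s\rfloor<d$, and $s\ge\min\{\dim_A\Phi,d\}-\epsilon/4$. Since $t\mapsto\max_{\sigma\in\mathrm{S}_d}\sum_{i\in\Lambda}\phi_\sigma^t(i)$ is non-increasing and equals $1$ at $t=\dim_A\Phi$ by (\ref{eq:def of aff dim}), the value $Z:=\max_\sigma\sum_i\phi_\sigma^s(i)$ is $\ge1$; let $\sigma^*$ attain it. Relabelling the coordinates by $\sigma^*$ is an orthogonal conjugation of $\Phi$, which changes neither $\dim_H K_\Phi$ nor $\dim_A\Phi$ and preserves both hypotheses; so assume $\sigma^*=\mathrm{id}$ and set $p_i:=\phi_{\mathrm{id}}^s(i)/Z=|r_{i,1}|\cdots|r_{i,m}|\,|r_{i,m+1}|^{s-m}/Z$, a strictly positive probability vector on $\Lambda$ (recall $r_{i,j}\ne0$). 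With $\chi_j^\circ:=-\sum_i p_i\log|r_{i,j}|$ one computes $H(p)=\chi_1^\circ+\cdots+\chi_m^\circ+(s-m)\chi_{m+1}^\circ+\log Z$, and comparing $\sigma^*=\mathrm{id}$ with the transpositions $(j\;m{+}1)$, $j\le m$, and $(m{+}1\;j')$, $j'>m+1$, and applying Jensen's inequality to the resulting estimates $\sum_i p_i(|r_{i,m+1}|/|r_{i,j}|)^{1-(s-m)}\le1$ and $\sum_i p_i(|r_{i,j'}|/|r_{i,m+1}|)^{s-m}\le1$, one obtains $\chi_j^\circ\le\chi_{m+1}^\circ\le\chi_{j'}^\circ$. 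Together with $\log Z\ge0$ this yields $\dim_L(\Phi,p)\ge s\ge\min\{\dim_A\Phi,d\}-\epsilon/4$.

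\emph{Step 2 (the homogeneous subsystem).} Let $n$ be large. By the weak law of large numbers and Lemma~\ref{lem:by CLT} — applied with the strictly positive vector $p$ and the vectors $c^{(j_1,j_2)}:=(\log|r_{i,j_1}|-\log|r_{i,j_2}|)_{i\in\Lambda}$, which are nonzero by the assumption on the moduli $|r_{i,j}|$ — together with a union bound over the finitely many pairs $j_1<j_2$, choose $w\in\Lambda^n$ whose symbol-count vector $(n_i)_{i\in\Lambda}$ satisfies $|n_i/n-p_i|$ small for all $i$ and $\sum_i n_i\log|r_{i,j_1}|\ne\sum_i n_i\log|r_{i,j_2}|$ for all $j_1<j_2$. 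Set $\Lambda':=\{u\in\Lambda^n:u\text{ has symbol counts }(n_i)_{i\in\Lambda}\}$ and $\Phi':=\{\varphi_u\}_{u\in\Lambda'}$. Since the $A_i$ commute, every $u\in\Lambda'$ has the same linear part $\mathrm{diag}(\prod_i r_{i,1}^{n_i},\dots,\prod_i r_{i,d}^{n_i})$, so $\Phi'$ is homogeneous (its linear parts trivially lie in a $1$-dimensional subgroup), $|\Lambda'|=\binom{n}{(n_i)_{i\in\Lambda}}\ge2$ for $n$ large, and $K_{\Phi'}\subseteq K_\Phi$. Let $q'$ be the uniform probability vector on $\Lambda'$. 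Its Lyapunov exponents are $\chi_j'=-\sum_i n_i\log|r_{i,j}|$, independent of the weights by homogeneity and pairwise distinct by the choice of $w$; and each $\Phi'_j=\{\varphi_{u,j}\}_{u\in\Lambda'}$ is exponentially separated, being a sub-IFS of $\{\varphi_{u,j}\}_{u\in\Lambda^n}$, which is exponentially separated by Lemma~\ref{lem:exp sep proceeds}. Hence $(\Phi',q')$ satisfies all hypotheses of Theorem~\ref{thm:main result for measures}, so the associated self-affine measure $\mu'$ satisfies $\dim\mu'=\min\{\dim_L(\Phi',q'),d\}$.

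\emph{Step 3 (conclusion).} As $n\to\infty$ with $(n_i/n)\to p$ one has $\tfrac1n\chi_j'=-\sum_i(n_i/n)\log|r_{i,j}|\to\chi_j^\circ$ and, by Stirling, $\tfrac1n H(q')=\tfrac1n\log\binom{n}{(n_i)_{i\in\Lambda}}\to H(p)$. Regarding $q'$ as a probability vector on $\Lambda^n$ supported on $\Lambda'$ — for which $\dim_L(\{\varphi_u\}_{u\in\Lambda^n},q')=\dim_L(\Phi',q')$, since the Lyapunov dimension depends only on $H(q')$ and on $-\sum_u q'_u\log|r_{u,j}|$ — Lemma~\ref{lem:cont of dim_L} applied with $\epsilon/4$ in place of its $\epsilon$ gives, for $n$ large, $\dim_L(\Phi',q')>\dim_L(\Phi,p)-\epsilon/4\ge\min\{\dim_A\Phi,d\}-\epsilon/2$; fix such an $n$. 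Then $\dim\mu'>\min\{\dim_A\Phi,d\}-\epsilon/2$, whence $\dim_H K_\Phi\ge\dim_H K_{\Phi'}\ge\dim_H(\mathrm{supp}(\mu'))\ge\dim\mu'>\min\{\dim_A\Phi,d\}-\epsilon$. Letting $\epsilon\downarrow0$ and combining with the classical upper bound proves the theorem. The main obstacle is precisely Step 2: producing a subsystem that satisfies the $1$-dimensional subgroup hypothesis without sacrificing affinity dimension, which is possible because the diagonal linear parts commute and because the multinomial coefficient $\binom{n}{(n_i)_{i\in\Lambda}}$ realizes the entropy $H(p)$ on the exponential scale.
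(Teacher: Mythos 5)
Your proof is correct and follows essentially the same strategy as the paper: define a Bernoulli weight $p$ on $\Lambda$ realizing the affinity dimension as Lyapunov dimension, pass to a homogeneous subfamily of a high iterate $\{\varphi_u\}_{u\in\Lambda^n}$ (exploiting that the diagonal linear parts commute, and using Lemma~\ref{lem:by CLT} to ensure distinct exponents, Lemma~\ref{lem:cont of dim_L} for continuity of $\dim_L$, and Lemma~\ref{lem:exp sep proceeds} for exponential separation), and then invoke Theorem~\ref{thm:main result for measures}. The implementational differences — you restrict to a single symbol-count class with uniform weights and control its entropy via Stirling, where the paper partitions $\Lambda^n$ by linear part and uses the almost-convexity bound~(\ref{eq:conc =000026 almo conv of ent}); you avoid the case split on $s\ge d$ by setting $s:=d-\epsilon/4$ when $\dim_A\Phi\ge d$; and you prove only $\dim_L(\Phi,p)\ge s$ rather than equality — are all cosmetic and do not change the underlying argument.
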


\begin{proof}
Since $\dim_{A}\Phi$ is always an upper bound for $\dim_{H}K_{\Phi}$,
we only need to show that $\dim_{H}K_{\Phi}\ge\min\left\{ \dim_{A}\Phi,d\right\} $.
Set $s:=\dim_{A}\Phi$ and write $\phi^{s}$ in place of $\phi_{e}^{s}$,
where $e$ denotes the identity element of $\mathrm{S}_{d}$ here.
Assume without loss of generality that,
\begin{equation}
\sum_{i\in\Lambda}\phi^{s}(i)\ge\sum_{i\in\Lambda}\phi_{\sigma}^{s}(i)\text{ for all }\sigma\in\mathrm{S}_{d}.\label{eq:phi^s is largest}
\end{equation}
Otherwise, we can simply permute the coordinates of the ambient space
$\mathbb{R}^{d}$ to adjust this. Note that from (\ref{eq:def of aff dim})
and (\ref{eq:phi^s is largest}) it follows that $\sum_{i\in\Lambda}\phi^{s}(i)=1$.

Write $p_{i}:=\phi^{s}(i)$ for $i\in\Lambda$, so that $p:=(p_{i})_{i\in\Lambda}$
is a probability vector. Set $m:=\left\lfloor s\right\rfloor $ and
note that for $i\in\Lambda$,
\begin{equation}
p_{i}=\begin{cases}
|r_{i,1}|\cdot...\cdot|r_{i,m}|\cdot|r_{i,m+1}|^{s-m} & \text{ if }s<d\\
|r_{i,1}\cdot...\cdot r_{i,d}|^{s/d} & \text{ if }s\ge d
\end{cases}.\label{eq:def of p_i}
\end{equation}
For $1\le j\le d$ set $\chi_{j}:=-\sum_{i\in\Lambda}p_{i}\log|r_{i,j}|$. 

Let us show that $s=\dim_{L}(\Phi,p)$. Assuming $s\ge d$,
\[
H(p)=-\sum_{i\in\Lambda}p_{i}\log|r_{i,1}\cdot...\cdot r_{i,d}|^{s/d}=\frac{s}{d}\sum_{j=1}^{d}\chi_{j}\ge\sum_{j=1}^{d}\chi_{j},
\]
and so,
\[
\dim_{L}(\Phi,p)=d\frac{H(p)}{\chi_{1}+...+\chi_{d}}=s.
\]
Suppose next that $m<s<d$. From the concavity of the $\log$ function
it follows that for $1\le j\le m$,
\[
(m+1-s)(\chi_{j}-\chi_{m+1})=\sum_{i\in\Lambda}p_{i}\log\frac{|r_{i,m+1}|^{m+1-s}}{|r_{i,j}|^{m+1-s}}\le\log\left(\sum_{i\in\Lambda}p_{i}\frac{|r_{i,m+1}|^{m+1-s}}{|r_{i,j}|^{m+1-s}}\right).
\]
From (\ref{eq:phi^s is largest}) and $\sum_{i\in\Lambda}\phi^{s}(i)=1$
we obtain
\[
\sum_{i\in\Lambda}p_{i}\frac{|r_{i,m+1}|^{m+1-s}}{|r_{i,j}|^{m+1-s}}\le1,
\]
and so $\chi_{j}\le\chi_{m+1}$ for $1\le j\le m$. A similar argument
shows that $\chi_{m+1}\le\chi_{j}$ for $m+1<j\le d$. Moreover, it
is easy to verify that,
\[
\sum_{j=1}^{m}\chi_{j}\le H(p)<\sum_{j=1}^{m+1}\chi_{j}.
\]
From all of this and by the definition of the Lyapunov dimension,
\[
\dim_{L}(\Phi,p)=m+\frac{H(p)-\chi_{1}-...-\chi_{m}}{\chi_{m+1}}=s.
\]
In a similar manner, the equality $s=\dim_{L}(\Phi,p)$ can be shown
to hold also in the case $m=s<d$, which shows that it holds in all
cases.

Let $0<\epsilon_{1},\epsilon_{2},\epsilon_{3}<1$ and $n\ge1$ be
with $\epsilon_{1}^{-1}\ll\epsilon_{2}^{-1}\ll\epsilon_{3}^{-1}\ll n$.
Write $p^{\times n}:=(p_{u})_{u\in\Lambda^{n}}$ and,
\[
\mathrm{R}:=\left\{ \left(r_{u,1},...,r_{u,d}\right)\::\:u\in\Lambda^{n}\right\} .
\]
For $r\in\mathrm{R}$ set,
\[
\mathcal{W}_{r}:=\left\{ u\in\Lambda^{n}\::\:\left(r_{u,1},...,r_{u,d}\right)=r\right\} \text{ and }\alpha_{r}:=p^{\times n}(\mathcal{W}_{r}).
\]
Note that $\alpha:=(\alpha_{r})_{r\in\mathrm{R}}$ is a probability
vector. Additionally, for $r\in\mathrm{R}$ let $q_{r}:=(q_{r,u})_{u\in\Lambda^{n}}$
be the probability vector with,
\[
q_{r,u}:=\begin{cases}
\frac{p_{u}}{\alpha_{r}} & \text{ if }u\in\mathcal{W}_{r}\\
0 & \text{ else}
\end{cases}\quad\text{ for }u\in\Lambda^{n}.
\]

From
\[
\mathrm{R}\subset\left\{ \left(\Pi_{i\in\Lambda}r_{i,1}^{k_{i,1}},...,\Pi_{i\in\Lambda}r_{i,d}^{k_{i,d}}\right)\::\:0\le k_{i,j}\le n\text{ for }i\in\Lambda\text{ and }1\le j\le d\right\} ,
\]
it follows that $|\mathrm{R}|\le(n+1)^{d|\Lambda|}$. Thus, by the
convexity bound (\ref{eq:conc =000026 almo conv of ent}) and since
$\epsilon_{2}^{-1}\ll n$,
\begin{multline}
\frac{1}{n}H\left(p^{\times n}\right)=\frac{1}{n}H\left(\sum_{r\in\mathrm{R}}\alpha_{r}q_{r}\right)\le\frac{1}{n}\sum_{r\in\mathrm{R}}\alpha_{r}H(q_{r})+\frac{1}{n}H(\alpha)\\
\le\frac{1}{n}\sum_{r\in\mathrm{R}}\alpha_{r}H(q_{r})+\frac{d|\Lambda|\log(n+1)}{n}\le\frac{1}{n}\sum_{r\in\mathrm{R}}\alpha_{r}H(q_{r})+\epsilon_{2}/2.\label{eq:by conv bd H(q) <=00003D}
\end{multline}

Write,
\[
\mathrm{R}_{1}:=\left\{ r\in\mathrm{R}\::\:\frac{H(p^{\times n})-H(q_{r})}{n}\le\epsilon_{2}\right\} .
\]
Note that $\frac{1}{n}\left(H(p^{\times n})-H(q_{r})\right)\ge-\log|\Lambda|$
for $r\in\mathrm{R}$. From this and (\ref{eq:by conv bd H(q) <=00003D}),
\begin{multline*}
\epsilon_{2}/2\ge\sum_{r\in\mathrm{R}}\alpha_{r}\frac{H(p^{\times n})-H(q_{r})}{n}\\
\ge\alpha\left(\mathrm{R}\setminus\mathrm{R}_{1}\right)\epsilon_{2}-\alpha\left(\mathrm{R}_{1}\right)\log|\Lambda|\ge\epsilon_{2}-\alpha\left(\mathrm{R}_{1}\right)\log\left(2|\Lambda|\right).
\end{multline*}
Hence,
\begin{equation}
\alpha\left(\mathrm{R}_{1}\right)\ge\epsilon_{2}/\left(2\log\left(2|\Lambda|\right)\right).\label{eq:alpha(R_1)>=00003D}
\end{equation}

By the weak law of large numbers and since $\epsilon_{2}^{-1},\epsilon_{3}^{-1}\ll n$,
\[
p^{\times n}\left\{ u\in\Lambda^{n}\::\:\left|\frac{1}{n}\log|r_{u,j}|+\chi_{j}\right|\ge\epsilon_{2}\right\} <\epsilon_{3}\text{ for }1\le j\le d.
\]
This clearly implies that,
\begin{equation}
\alpha\left\{ \left(r_{1},...,r_{d}\right)\in\mathrm{R}\::\left|\frac{1}{n}\log|r_{j}|+\chi_{j}\right|\ge\epsilon_{2}\text{ for some }1\le j\le d\right\} <d\epsilon_{3}.\label{eq:alpha=00007Bexp not close=00007D<=00003D}
\end{equation}

By condition (\ref{enu:cond reg mod of r_i,j}) in the statement of
the theorem, by Lemma \ref{lem:by CLT}, and since $\epsilon_{3}^{-1}\ll n$,
\[
p^{\times n}\left\{ u\in\Lambda^{n}\::\:|r_{u,j_{1}}|=|r_{u,j_{2}}|\right\} <\epsilon_{3}\text{ for }1\le j_{1}<j_{2}\le d.
\]
Thus,
\begin{equation}
\alpha\left\{ \left(r_{1},...,r_{d}\right)\in\mathrm{R}\::\:|r_{j_{1}}|=|r_{j_{2}}|\text{ for some }1\le j_{1}<j_{2}\le d\right\} <d^{2}\epsilon_{3}.\label{eq:alpha=00007Bcont eq=00007D<=00003D}
\end{equation}

From (\ref{eq:alpha(R_1)>=00003D}), (\ref{eq:alpha=00007Bexp not close=00007D<=00003D})
and (\ref{eq:alpha=00007Bcont eq=00007D<=00003D}), and since $\epsilon_{2}^{-1}\ll\epsilon_{3}^{-1}$,
it follows that there exists $\left(r_{1},...,r_{d}\right)=r\in\mathrm{R}$
so that $H(q_{r})\ge n\left(H(p)-\epsilon_{2}\right)$, $\left|\frac{1}{n}\log|r_{j}|+\chi_{j}\right|<\epsilon_{2}$
for each $1\le j\le d$, and $|r_{j_{1}}|\ne|r_{j_{2}}|$ for each
$1\le j_{1}<j_{2}\le d$. By Lemma \ref{lem:cont of dim_L} and since
$\epsilon_{1}^{-1}\ll\epsilon_{2}^{-1}$,
\[
\dim_{L}\left(\{\varphi_{u}\}_{u\in\Lambda^{n}},q_{r}\right)>\dim_{L}(\Phi,p)-\epsilon_{1}=\dim_{A}\Phi-\epsilon_{1}.
\]

Let $\mu$ be the self-affine measure corresponding to $\{\varphi_{u}\}_{u\in\Lambda^{n}}$
and $q_{r}$. Since $\mu$ is supported on $K_{\Phi}$, from Lemma
\ref{lem:exp sep proceeds}. and by Theorem \ref{thm:main result for measures},
\[
\dim_{H}K_{\Phi}\ge\dim\mu=\min\left\{ \dim_{L}\left(\{\varphi_{u}\}_{u\in\Lambda^{n}},q_{r}\right),d\right\} \ge\min\left\{ \dim_{A}\Phi-\epsilon_{1},d\right\} .
\]
Since $\epsilon_{1}$ is an arbitrarily small positive number, this
completes the proof of the theorem.
\end{proof}
\bibliographystyle{plain}
\bibliography{bibfile}

$\newline$\textsc{Department of Mathematics, Technion, Haifa, Israel}$\newline$$\newline$\textit{E-mail: }
\texttt{arapaport@technion.ac.il}
\end{document}